\renewcommand{\algorithmicrequire}{\textbf{Input:}}
\renewcommand{\algorithmicensure}{\textbf{Output:}}
\newtheorem{remark}{Remark}[section]
\newtheorem{proposition}{Proposition}[section]
\def\hot#1{\textcolor{red}{#1}}
\def\IR{{\mathbb R}}
\def\IC{{\mathbb C}}
\def\IL{{\mathbb L}}
\def\IM{{\mathbb M}}
\newcommand{\bA}{{\textbf A}}
\newcommand{\bB}{{\textbf B}}
\newcommand{\bC}{{\textbf C}}
\newcommand{\bE}{{\textbf E}}
\newcommand{\bF}{{\textbf F}}
\newcommand{\bG}{{\textbf G}}
\newcommand{\bS}{{\textbf S}}
\newcommand{\bY}{{\textbf Y}}
\newcommand{\bJ}{{\textbf J}}
\newcommand{\bL}{{\textbf L}}
\newcommand{\bM}{{\textbf M}}
\newcommand{\bI}{{\textbf I}}
\newcommand{\bH}{{\textbf H}}
\newcommand{\bP}{{\textbf P}}
\newcommand{\bQ}{{\textbf Q}}
\newcommand{\bW}{{\textbf W}}
\newcommand{\bT}{{\textbf T}}
\newcommand{\bZ}{{\textbf Z}}
\newcommand{\bx}{{\textbf x}}
\newcommand{\by}{{\textbf y}}
\newcommand{\bu}{{\textbf u}}
\newcommand{\bV}{{\textbf V}}
\newcommand{\bU}{{\textbf U}}
\newcommand{\bc}{{\textbf c}}
\newcommand{\bfe}{{\textbf e}}
\newcommand{\bb}{{\textbf b}}
\newcommand{\bw}{{\textbf w}}
\newcommand{\bz}{{\textbf z}}
\newcommand{\bfz}{{\mathbf 0}}
\newcommand{\cH}{ {\cal H} }
\newcommand{\npf}{{{}\ell_p}}
\newcommand{\nqf}{{{}\ell_q}}
\newcommand{\np}{{{}N_p}}
\newcommand{\nq}{{{}N_q}}
\newcommand{\quadP}{\widetilde{\bP}}
\newcommand{\quadQ}{\widetilde{\bQ}}
\newcommand{\quadU}{\widetilde{\bU}}
\newcommand{\quadL}{\widetilde{\bL}}
\newcommand{\quadZ}{\widetilde{\bZ}}
\newcommand{\quadS}{\widetilde{\bS}}
\newcommand{\quadY}{\widetilde{\bY}}
\newcommand{\lrU}{\widehat{\bU}}
\newcommand{\lrL}{\widehat{\bL}}
\newcommand{\bbh}{\mbox{\boldmath${\mathbbm{h}}$}}
\newcommand{\bbg}{\mbox{\boldmath${\mathbbm{g}}$}}
\newcommand{\bbH}{\mbox{\boldmath${\mathbbm{H}}$}}
\newcommand{\bbG}{\mbox{\boldmath${\mathbbm{G}}$}}
\newcommand{\quadLb}{\widetilde{\bbh}}
\newcommand{\quadcU}{\widetilde{\bbg}}
\newcommand{\quadLB}{\widetilde{\bbH}}
\newcommand{\quadCU}{\widetilde{\bbG}}
\newcommand{\quadLL}{\widetilde{\IL}}
\newcommand{\quadMM}{\widetilde{\IM}}
\newcommand{\quadAr}{\widetilde{\bA}_r}
\newcommand{\quadbr}{\widetilde{\bb}_r}
\newcommand{\quadcr}{\widetilde{\bc}_r}
\newcommand{\quadEr}{\widetilde{\bE}_r}
\newcommand{\mor}{\textsf{MoR}\xspace}
\newcommand{\QBT}{\textsf{QuadBT}\xspace}
\newcommand{\BT}{\textsf{BT}\xspace}
\newcommand{\imunit}{{\dot{\imath\hspace*{-0.2em}\imath}}}
\newcommand{\lp}{\mu}
\newcommand{\rp}{\lambda}
\newcommand{\origLL}{\IL}
\newcommand{\origMM}{\IM}
\renewcommand{\algorithmicrequire}{\textbf{Input:}}
\renewcommand{\algorithmicensure}{\textbf{Output:}}
\pgfplotsset{compat = 1.13,
	colormap name = viridis,
	unbounded coords = jump}
\definecolor{myRed}{HTML}{E34A33}
\definecolor{myBlue}{HTML}{0571B0}
\definecolor{myBrown}{HTML}{A6611A}
\begin{document}
  

\title{Data-driven balancing \\ of linear dynamical systems}

\author[$\ast$]{Ion Victor Gosea}
\affil[$\ast$]{Max Planck Institute for Dynamics of Complex Technical Systems,\authorcr
	Sandtorstr. 1, 39106 Magdeburg, Germany.\authorcr
  \email{gosea@mpi-magdeburg.mpg.de}, \orcid{0000-0003-3580-4116}}
  
\author[$\dagger$]{Serkan Gugercin}
\affil[$\dagger$]{Department of Mathematics and Computational Modeling and Data
	Analytics Division, \authorcr Academy of Integrated Science, Virginia Tech, Blacksburg,
	VA 24061, USA.\authorcr
	\email{gugercin@vt.edu}, \orcid{0000-0003-4564-5999} \authorcr
	\email{beattie@vt.edu}, \orcid{0000-0003-3302-4845}}

\author[$\dagger$]{Christopher Beattie}

\shorttitle{Quadrature-based balanced truncation from data}
\shortauthor{I. V. Gosea, S. Gugercin, C. Beattie}
\shortdate{}

\keywords{Balanced truncation, numerical quadrature, data-driven modeling, transfer functions, impulse responses,  Gramians.}

\msc{37M99, 41A20, 65D30, 65K99, 93A15, 93B15.}

\abstract{%
	We present a novel reformulation of balanced truncation, a classical model reduction method. The principal innovation that we introduce comes through the use of system response data that has been either measured or computed, without reference to any prescribed realization of the original model. Data are represented by sampled values of the transfer function {or the impulse response}  corresponding to the original model. We discuss parallels that our approach bears with the Loewner framework, another popular data-driven method. We illustrate our approach numerically in both continuous-time and discrete-time cases. 
}

\maketitle


\section{Introduction} 
\label{intro}

Model reduction (\mor) here refers to 
system-theoretic techniques used to create compactly represented \emph{reduced models} that are capable of reproducing the input-output behaviour of large-scale dynamical systems with high accuracy. 
This is accomplished by encoding fine scale dynamical features of the original systems efficiently into \emph{reduced} dynamical systems, allowing them to closely  mimic the input/output behavior of the original system for a wide range of input conditions, while having substantially lower order than the system whose behaviour they mimic.  Such reduced models should be cheap to simulate and easy to manipulate and control.
We refer the reader to \cite{ACA05, AntBG20,siammorbook2017, quarteroni2015reduced} for details on a variety of \mor techniques. 

Balanced truncation (\BT) \cite{moore1981principal,mullis1976synthesis} is one of the most successful and commonly deployed \mor methods. For linear dynamical systems such as those that we consider in this paper, \BT retains asymptotic stability of the original systems and provides \emph{a priori} bounds for the model reduction error. \BT effectively discards those states that are both difficult to reach and to observe, as quantified through the relative magnitude of the system's Hankel singular values. Typically, the loss of these states is expected to have little effect on the observed input-output dynamics of the system and the resulting reduced model may replace the original system in simulations or analysis without much lost with regard to accuracy. Extensions of \BT to nonlinear systems have emerged over the years, starting with \cite{morSch93} and continuing with more recent approaches tailored to particular classes of nonlinearities, such as bilinear systems in \cite{damm11}, quadratic-bilinear systems in \cite{morBTQBgoyal}, and to switched systems in \cite{morGosPAetal18}.

The major computational cost of \BT is the need to solve large-scale Lyapunov equations in order to obtain the system Gramians, or better, their square-root factors. Many methods have been proposed to accomplish this efficiently; see, e.g., \cite{WP02,li2002low,penzl1998numerical,BenS13,simoncini2016computational,kurschner2016efficient,sabino2007solution} and references therein. All such methods make central use of the system state-space representation and, in the context of \BT, 
produce explicit state-space projections that ultimately are engaged to generate a reduced model. 
Thus, \BT is ``intrusive" to the extent that internal representations of the system dynamics play a central role in determining a final reduced model.  
This is to be contrasted with ``nonintrusive" methods that are \emph{data-driven}, requiring access only to system response and behavior data, e.g., transfer function or impulse response measurements. 

In this paper, we develop a new, data-driven formulation for \BT. We achieve this by recognizing that \BT does not make independent use of the two system Gramians, which each depend on internal and hence generally inaccessible variables.   \BT rather makes fundamental use of their \emph{product}, which preserves system invariants that do not depend on an underlying system realization.  In the present work, we describe how this product of Gramians can be approximated directly through transfer function sampling to any accuracy desired.  We explicitly derive reduced-order quantities using observed response data and as consequence, arrive at a novel, \emph{non-intrusive} formulation of \BT.

The rest of the paper is organized as follows: 
In \S \ref{sec:BT}, we review the basic elements of \BT and detail the usual steps involved for computing balanced ROMs. We introduce our main result in \S \ref{sec:QuadBT}, a data-driven approach to \BT and discuss how one estimates key quantities through quadrature in the  frequency-domain, that in turn, involves quantities that are extracted from data. 
In \S \ref{sec:ref_and_ext}, we include refinements that take into account symmetries implicit in real-valued dynamics as well as extensions to MIMO, discrete-time {and application to infinite-dimensional} systems. Even though our main focus is frequency domain data, in \S \ref{sec:timeQBT}  we revisit the problem using time-domain data instead and make connections to earlier works. Conclusions are provided in \S \ref{sec:conc}, while appendices elaborating on the quadrature rules considered and 
detailing the proof of Proposition \ref{prop:QuadError} follow the bibliography.  Numerical experiments and various illustrations are provided in \S\S \ref{sec:QuadBT}, \ref{sec:ref_and_ext}, and \ref{sec:timeQBT}.

\section{Balanced Truncation}  \label{sec:BT}

Consider the linear time-invariant (LTI) system
\begin{align}
\begin{split} \label{OrigSys}
\bE\, \dot{\bx}(t) &= \bA\, \bx(t) + \bB\, \bu(t), \\
\by(t) &= \bC \bx(t),
\end{split}
\end{align}
where the input mapping is given by $\bu : \mathbb{R} \rightarrow \mathbb{R}^m$, the (generalized) state trajectory/variable is
$\bx: \mathbb{R} \rightarrow \mathbb{R}^n$, and the output mapping is $y : \mathbb{R} \rightarrow \mathbb{R}^p$. The system matrices are given by $\bE,\,\bA \in \mathbb{R}^{n \times n}$, and  $\bB\in \mathbb{R}^{n\times m}$, $\bC \in \mathbb{R}^{p\times n}$. Assume $\bE$ is nonsingular and that the matrix pencil $(\bA,\bE)$ is \emph{asymptotically stable}, meaning that the eigenvalues of $(\bA,\bE)$ (or equivalently of $\bE^{-1}\bA$) are located in the open left half-plane.
The transfer function of the LTI system \eqref{OrigSys},
defined as 
\begin{equation}\label{TF_def}
\bH(s)=\bC (s \bE -\bA)^{-1} \bB,    
\end{equation}
is a $p\times m$ matrix-valued rational function in $s$.

The two fundamental quantities in \BT are the reachability and observability Gramians. 
The reachability Gramian $\bP$ provides a measure of how easily a state can be accessed from the zero state.
\textcolor{black}{In the time domain, $\bP$ is 
	given by 
	\begin{align} \label{gram_P}
	\begin{split}
	\bP = \int_{0}^{\infty} e^{\bE^{-1}\bA t} \bE^{-1}\bB \bB^T\bE^{-T} e^{\bA^T\bE^{-T} t} dt.
	\end{split}
	\end{align}
}
Taking $\imunit^2 = -1$,
this can be represented equivalently in the frequency domain as
\begin{align} \label{gram_P_freq}
\begin{split}
\bP =
\frac{1}{2\pi}\int_{-\infty}^{\infty} (\imunit \zeta\bE -\bA)^{-1} \bB \bB^T (-\imunit \zeta\bE^T -\bA^T)^{-1} d \zeta.
\end{split}
\end{align}

In a complementary way, the observability Gramian describes how easily a state
can be observed, and may be represented as $\bE^T\bQ\bE$ where $\bQ$ is defined as
\begin{align} \label{gram_Q}
\begin{split}
\bQ = \frac{1}{2\pi}\int_{-\infty}^{\infty} (-\imunit \omega\bE^T -\bA^T)^{-1} \bC^T \bC (\imunit \omega\bE -\bA)^{-1} d \omega.
\end{split}
\end{align}
The Gramians $\bP$ and $\bQ$ satisfy the following generalized Lyapunov equations:
\begin{align}  \label{lyapforP}
\bA \bP\bE^T + \bE \bP \bA^T + \bB\bB^T = \bfz, \\
\bA^T \bQ \bE + \bE^T \bQ \bA + \bC^T \bC = \bfz. \label{lyapforQ}
\end{align}
Although $\bE^T\bQ\bE$ is the observability Gramian in this case,  the Lyapunov equation
\eqref{lyapforQ}
for $\bQ$ enjoys a fundamental structural similarity to 
\eqref{lyapforP} and so similar computational algorithms are applicable
(see, {e.g., \cite{penzl1998numerical,Stykel08,BenS13}}).  
The solutions, $\bP$ and $\bQ$, to \eqref{lyapforP} and \eqref{lyapforQ}, respectively, are symmetric  positive-semidefinite matrices, and one may compute square factors $\bL, \bU \in \mathbb{R}^{n \times n}$ (e.g., via a Cholesky factorization) so that
\begin{equation} \label{lyapFact}
\bP = \bU \bU^T \quad \mbox{and} \quad \ \bQ = \bL \bL^T.
\end{equation}
This provides the essential elements allowing us to define \emph{balanced truncation} (\BT), 
which we summarize in Algorithm \ref{origbt}. 
\begin{algorithm}[htp] 
	\caption{Balanced truncation (\BT) (square-root implementation)}  
	\label{origbt}                                        
	\algorithmicrequire~LTI system described by matrices $\bE,\,\bA \in \mathbb{R}^{n \times n}, \ \bB\in \mathbb{R}^{n \times m}$, and $\bC\in \mathbb{R}^{p \times n}$.
	
	\algorithmicensure~\BT reduced-system:   $\bA_r \in \mathbb{R}^{r \times r},  \bB_r\in \mathbb{R}^{r \times m}, \bC_r \in \mathbb{R}^{p \times r}$.
	
	\begin{algorithmic} [1]                                        
		\STATE \label{computeUL} Compute the  Lyapunov factors $\bU, \bL \in \mathbb{R}^{n\times n}$ from \eqref{lyapFact} and pick a 
		truncation index, $1\leq r\leq \min(\mathsf{rank}(\bU),\mathsf{rank}(\bL))$.
		\STATE  \label{svdstep} Compute the SVD of the matrix $ \bL^T \bE \bU$, partitioned as follows 
		\begin{equation}\label{SVD_BT}
		\bL^T \bE \bU = \left[ \begin{matrix}
		\bZ_1 & \bZ_{2}
		\end{matrix}  \right] \left[ \begin{matrix}
		\bS_1 & \\[1ex] & \bS_{2}
		\end{matrix}  \right] \left[ \begin{matrix}
		\bY_1^T \\[1ex] \bY_{2}^T
		\end{matrix}  \right],
		\end{equation}
		where $\bS_1 \in \mathbb{R}^{r \times r} \ \mbox{and} \ \bS_{2} \in \mathbb{R}^{(n-r) \times (n-r)}$.
		
		\STATE \label{WrVrstep}  
		Construct the model reduction bases 
		\begin{equation}
		\bW_r = \bL \bZ_1 	\bS_1^{-1/2} \ \mbox{and}\ \bV_r = \bU \bY_1 	\bS_1^{-1/2}.
		\end{equation}
		
		\STATE \label{project}  The reduced-order system matrices are given by
		\begin{equation} \label{btar}
		\hspace*{-6mm}{\small \begin{array}{cc}
			\bE_r = \bW_r^T \bE \bV_r= \bI_r   &   
			\bA_r = \bW_r^T \bA \bV_r=	\bS_1^{-1/2}\bZ_1^T\,(\bL^T\bA\bU)\,\bY_1\bS_1^{-1/2}, 
			\\[2mm]
			\bB_r = \bW_r^T \bB=\bS_1^{-1/2}\bZ_1^T\,(\bL^T\bB),
			& \bC_r = \bC \bV_r=(\bC\bU)\,\bY_1\bS_1^{-1/2}. 
			\end{array} }
		\end{equation}
		
	\end{algorithmic}
\end{algorithm}
The singular values of $\bL^T \bE \bU$ (diagonal entries of $\mathsf{diag}(\bS_1,\bS_2)$ in \eqref{SVD_BT}) are the \emph{Hankel singular values} of the associated dynamical system and these values are system invariants that are independent of realization. \BT truncates the states that correspond to small Hankel singular values in
$\bS_2$.  The $r$th-order reduced model resulting from \BT in Algorithm \ref{origbt} is asymptotically stable and its transfer function is
\begin{equation}\label{TFr_def}
\bH_r(s) = \bC_r(s\bE_r-\bA_r)^{-1}\bB_r,
\end{equation}
which satisfies
$
\| \bH - \bH_r \|_{\mathcal{H}_\infty} \leq 2\, \mathsf{trace}(\bS_2),
$
where $\| \cdot \|_{\mathcal{H}_\infty}$ denotes the ${\mathcal{H}_\infty}$-norm of a dynamical and where we assumed, for simplicity, that the Hankel singular values are distinct. For details, we refer the reader to \cite{ACA05}.

\section{A Data Driven Framework for Balancing} \label{sec:QuadBT}

The main innovation that we introduce centers on the observation that the key quantities $\bL^T\bE\bU$, 
$\bL^T\bA\bU$,  $\bL^T\bB$, and $\bC\bU$, appearing in Steps 1 and 3 of Algorithm \ref{origbt}, may be replaced by unitarily equivalent quantities that are well approximated directly from data. 
``Data'' in our setting correspond to sampling of the transfer function $\bH(s)$;
we assume access to the values of $\bH(\imunit \hat{\omega})$ at a finite number of frequencies $\hat{\omega}$.  
{(We consider time-domain sampling in \S \ref{sec:timeQBT}.)}
Our choice of frequency sampling, $\hat{\omega}$, will be associated with 
numerical quadratures used to approximate \eqref{gram_P_freq} and \eqref{gram_Q}.  Further details are provided below and in Appendix A, however we note here that
some quadrature rules also engage asymptotics of the integrand which in our setting may then require measuring the leading two Markov parameters of $\bH(s)$, i.e., the leading two coefficients in the expansion of $\bH(s)$ around $s=\infty$. 
For $\bH(s) = \bC(s\bE-\bA)^{-1}\bB$,  $\bM_0 = \bC \bE^{-1} \bB$ is the zeroth Markov parameter (the first coefficient) and $\bM_1 = \bC \bE^{-1} \bA \bE^{-1} \bB$ is the first Markov parameter (the second expansion coefficient). {These quantities can be measured nonintrusively, as explained in, e.g., \cite{Juang94} with practical applications ranging from Vector Network Analyzers \cite{keysight} to 3D Laser Vibrometers. \cite{Vibrometry14}.}

To simplify initial discussion, we consider first SISO systems, taking $\bB=\bb$ and $\bC=\bc^T$, for $\bb,\,\bc\in\mathbb{R}^n$ and the associated (scalar-valued) transfer function, $H(s)$\footnote{We distinguish matrix- and vector-valued quantities from scalar-valued quantities with boldface; hence $H(s)$ for SISO vs. $\bH(s)$ for the MIMO counterpart, and similarly for Markov parameters, $M_0$ and $M_1$ for SISO vs. $\bM_0$ and $\bM_1$ for the MIMO counterparts.}. Generalization to MIMO systems is straightforward and discussed in \S\ref{MIMOsys}. 

\subsection{Computing key quantities from data}  \label{quantitiesfromdata}

For large-scale dynamical systems, solving the Lyapunov equations \eqref{lyapforP} and \eqref{lyapforQ} for  the Gramians $\bP$ and $\bQ$  is computationally demanding. Sophisticated strategies have been developed for solving these equations; see, e.g., \cite{ACA05,BBF14,sabino2007solution,BenS13,simoncini2016computational,kurschner2016efficient} and  references therein. Using any of these techniques, one may find low-rank  approximations to $\bU$ and $\bL$, and then Algorithm \ref{origbt} uses instead these approximate low-rank factors.  These approximate balancing approaches remain intrusive in the sense that they still will depend on explicit system realizations and model projection, i.e., $\bU$ and $\bL$ are approximated first, followed by an explicit projection step.  Our approach will sidestep this through direct, non-intrusive, data-driven estimation of a quantity that is unitarily equivalent to $\bL^T \bE \bU$. 

We consider first a numerical quadrature rule that approximates the frequency integral defining $\bP$ in \eqref{gram_P_freq}, producing an approximate Gramian 
\begin{align} \label{quad_P}
\bP \approx \quadP = \sum_{j=1}^{\npf} \rho_j^2  (\imunit \zeta_j \bE -\bA)^{-1} \bb \bb^T (-\imunit \zeta_j \bE^T -\bA^T)^{-1}
+ \rho_\infty^2 \bE^{-1}\bb \bb^T \bE^{-T}
\end{align}
with $\rho_j^2$ and $\zeta_j$ denoting, respectively, quadrature weights and nodes. 
The last term involving $\rho_\infty$ corresponds to a ``node at infinity" and relates to the asymptotic decay of the integrand in \eqref{gram_P_freq} in the neighborhood of $\zeta = \infty$. 
The presence of this term in \eqref{quad_P} will depend on the choice of numerical quadrature; we investigate choices where this term is present and others where it is absent. In either case, we label the total number of quadrature points (including potentially a ``node at infinity") as $\np$.  If the $\rho_\infty$-term in \eqref{quad_P} is absent, then $\npf = \np$; otherwise, $\npf = \np-1$. 
Picking a distribution of quadrature nodes on the imaginary axis is natural in this context; we focus on two standard exemplars derived from the trapezoid rule and Clenshaw-Curtis quadrature and provide further details in Appendix A. Quadrature node distributions can be chosen on different integration contours as well (e.g.,\cite{weidemanTrefethen2007paraHyperContours}) and the approach that we propose is equally applicable in these modified circumstances as well. 

The \textsf{Balanced POD} approach of  \cite{WP02}  uses this type of approximation  for 
$\bP$, employing the trapezoid rule as an underlying numerical quadrature approximation.
A modified version of
\textsf{Balanced POD} based on a time-domain quadrature applied to \eqref{gram_P} was later proposed in  \cite{Ro05} with an application to fluid dynamics.  As is the case with other approximate 
balancing techniques, \textsf{Balanced POD} is \emph{intrusive} in the sense that access to an explicit state-space realization is required.

Approximate balancing techniques typically approximate the Lyapunov factors $\bU$ and $\bL$ from \eqref{lyapFact}, and to the extent that system response data may be utilized, its use seeks to reconstruct 
evolving states of the system and so, tacitly requires a well-defined state space realization.  This contrasts significantly with the \emph{nonintrusive} data-driven approach that we propose below.  While there are important common themes that we adopt from \textsf{Balanced POD}, we do not require any knowledge of a state space realization; we do not sample state trajectories even implicitly, and we have no need for direct approximation of the Lyapunov factors in \eqref{lyapFact}.

Evidently, we may decompose the quadrature-based Gramian approximation  as  $\quadP = \quadU \quadU^*$ with a square-root factor $\quadU \in \mathbb{C}^{n \times \np}$ defined as 
\begin{equation} \label{quad_U}
\quadU = \left[ \,
\rho_1  (\imunit \zeta_1 \bE -\bA)^{-1} \bb \quad 
\cdots \quad \rho_\npf  (\imunit \zeta_\npf \bE-\bA)^{-1} \bb 
\quad \rho_\infty \bE^{-1}\bb  \right].
\end{equation}
Note that both $\bP$ and its quadrature approximation, $\quadP$, are real-valued matrices, yet the explicit square-root factor, $\quadU$, is overtly complex and subsequent computation engages complex floating point arithmetic. A reasonable concern that may arise at this point is the potential loss of an underlying structural system symmetry, leaving us possibly with a complex-valued reduced model as an artifact of rounding errors. We  address this concern in \S\ref{sec:realness} and show how the underlying system symmetry associated with real dynamics may be explicitly preserved
and  implemented in real arithmetic.

By applying a  similar quadrature approximation to $\bQ$, we obtain
\begin{align} \label{quad_Q}
\begin{split}
\bQ \approx \quadQ = \sum_{k=1}^{\nqf} \phi_k^2  (-\imunit \omega_k \bE^T -\bA^T)^{-1} \bc \bc^T (\imunit \omega_k \bE -\bA)^{-1} + \phi_\infty^2 \bE^{-T}\bc \bc^T \bE^{-1},
\end{split}
\end{align}
where $\phi_k^2$ and $\omega_k$ denote, respectively, quadrature weights and nodes associated with approximating $\bQ$ from \eqref{gram_Q}.  The corresponding square-root factor is
$\quadQ = \quadL \quadL^*$ where
\begin{equation}  \label{quad_L}
\quadL^* = \left[ \begin{matrix}
\phi_1 \bc^T (\imunit \omega_1\bE -\bA)^{-1} \\
\vdots \\ \phi_\nqf \bc^T (\imunit \omega_\nqf \bE -\bA)^{-1}  \\[1mm] \phi_\infty \bc^T \bE^{-1}
\end{matrix}  \right] \in \mathbb{C}^{\nq \times n}.
\end{equation}

Let us suppose for the moment that the quadratures described in \eqref{quad_P} and  \eqref{quad_Q} \emph{exactly} 
recover the Gramians described in \eqref{gram_P_freq} and \eqref{gram_Q}, respectively, so that 
\begin{equation} \label{exactGramFact}
\bU\bU^T=\quadU\quadU^*\quad\mbox{and}\quad\bL\bL^T=\quadL\quadL^*.  
\end{equation}
This, in turn,  assures us of the existence of partial isometries, $\boldsymbol{\Psi}_p\in\mathbb{C}^{\np \times n}$ and $\boldsymbol{\Psi}_q\in\mathbb{C}^{\nq \times n}$ such that 
$$
\quadU^*=\boldsymbol{\Psi}_p\bU^T   \quad \mbox{and} \quad\quadL^*=\boldsymbol{\Psi}_q\bL^T.
$$
Thus, $\quadL^* \bE \quadU=\boldsymbol{\Psi}_q(\bL^T\bE\bU)\boldsymbol{\Psi}_p^*$ and so, 
$\quadL^* \bE \quadU$ is unitarily equivalent to $\bL^T\bE\bU$, modulo its kernel and cokernel. 
More precisely,
$\bL^T\bE\bU$ is isomorphic to a linear transformation induced by $\quadL^* \bE \quadU$ viewed as a mapping of the quotient space,  $\mathbb{C}^{\np}/\mathsf{Ker}(\quadU)$ onto $\mathsf{Ran}(\quadL^* \bE \quadU)$.
This is a significant observation since the singular values of $\bL^T\bE\bU$ are the \emph{Hankel singular values} of the system which are system invariants (i.e., independent of system realization) and play a fundamental role in \BT (see,  Step \ref{svdstep} of Algorithm \ref{origbt}).   
Although $\bL^T\bE\bU$ itself is evidently tied closely to knowledge of a state-space realization, 
the unitarily equivalent matrix, $\quadL^* \bE \quadU$ will provide essentially equivalent information while being directly derivable from data, as we will see.

\begin{proposition} \label{prop:quadL}
	Let $\quadU$ and $\quadL$ be as defined in 
	\eqref{quad_U} and \eqref{quad_L}. 
	Define the matrix $\quadLL =\quadL^* \bE \quadU \in \IC^{\nq \times \np}$. Then,
	for $1\leq k\leq\nq$ and $1\leq j\leq \np$,
	\begin{equation} \label{quadLoew}
	\quadLL_{k,j} = \begin{cases} - \phi_k \rho_j 
	\displaystyle \frac{H(\imunit\omega_k) - H(\imunit\zeta_j)}{\imunit\omega_k - \imunit \zeta_j}, \ \text{for} \ 1 \leq k \leq \nqf, \ 1 \leq j \leq \npf, \\[2mm]
	\phi_k \rho_\infty H(\imunit\omega_k), \ \text{for} \ 1 \leq k \leq \nqf = N_q, \  j = N_p = \npf +1, \\[2mm]
	\phi_\infty \rho_j H(\imunit\zeta_j), \ \text{for} \  k = N_q = \nqf +1, \ 1 \leq j \leq \npf = N_p, \\[2mm]
	\phi_\infty \rho_\infty M_0, \ \text{for} \  k = N_q = \nqf +1, \ j = N_p = \npf+1.
	\end{cases}
	\end{equation}
\end{proposition}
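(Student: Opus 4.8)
The plan is to obtain \eqref{quadLoew} by a direct block computation: since $\quadLL=\quadL^{*}\bE\quadU$, its $(k,j)$ entry equals the $k$th row of $\quadL^{*}$, times $\bE$, times the $j$th column of $\quadU$. Reading these rows and columns off from \eqref{quad_L} and \eqref{quad_U} leaves four types of products to evaluate, sorted by whether each of $k$ and $j$ indexes an ordinary quadrature node or the ``node at infinity.'' Throughout I would keep in mind the SISO definitions $H(s)=\bc^{T}(s\bE-\bA)^{-1}\bb$ and $M_{0}=\bc^{T}\bE^{-1}\bb$.

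The only step that is not immediate is the generic case $1\le k\le\nqf$, $1\le j\le\npf$, where
\[
\quadLL_{k,j}=\phi_{k}\rho_{j}\,\bc^{T}(\imunit\omega_{k}\bE-\bA)^{-1}\,\bE\,(\imunit\zeta_{j}\bE-\bA)^{-1}\bb .
\]
Here I would invoke the resolvent (second) identity in the form adapted to a generalized pencil: from the algebraic identity $(\imunit\zeta_{j}\bE-\bA)-(\imunit\omega_{k}\bE-\bA)=\imunit(\zeta_{j}-\omega_{k})\bE$, pre-multiplying by $(\imunit\omega_{k}\bE-\bA)^{-1}$ and post-multiplying by $(\imunit\zeta_{j}\bE-\bA)^{-1}$ gives
\[
(\imunit\omega_{k}\bE-\bA)^{-1}\bE(\imunit\zeta_{j}\bE-\bA)^{-1}=\frac{(\imunit\omega_{k}\bE-\bA)^{-1}-(\imunit\zeta_{j}\bE-\bA)^{-1}}{\imunit(\zeta_{j}-\omega_{k})}.
\]
Multiplying on the left by $\bc^{T}$ and on the right by $\bb$ turns the numerator into $H(\imunit\omega_{k})-H(\imunit\zeta_{j})$, and rewriting $\imunit(\zeta_{j}-\omega_{k})=-(\imunit\omega_{k}-\imunit\zeta_{j})$ produces exactly the first branch of \eqref{quadLoew}. (This presumes $\omega_{k}\neq\zeta_{j}$, i.e.\ the two node sets are disjoint; a coincident pair $\omega_{k}=\zeta_{j}$ would instead yield $-\phi_{k}\rho_{j}H'(\imunit\omega_{k})$ in the limit, precisely as on the diagonal of a Loewner matrix.)

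The remaining three cases only require cancelling $\bE$ against an adjacent $\bE^{-1}$. If $j=N_{p}$ is the node at infinity, the last column of $\quadU$ is $\rho_{\infty}\bE^{-1}\bb$, so $\quadLL_{k,j}=\phi_{k}\rho_{\infty}\,\bc^{T}(\imunit\omega_{k}\bE-\bA)^{-1}\bb=\phi_{k}\rho_{\infty}H(\imunit\omega_{k})$; symmetrically, if $k=N_{q}$ is the node at infinity, the last row of $\quadL^{*}$ is $\phi_{\infty}\bc^{T}\bE^{-1}$, giving $\quadLL_{k,j}=\phi_{\infty}\rho_{j}H(\imunit\zeta_{j})$; and if both indices are ``at infinity,'' then $\quadLL_{N_{q},N_{p}}=\phi_{\infty}\rho_{\infty}\,\bc^{T}\bE^{-1}\bb=\phi_{\infty}\rho_{\infty}M_{0}$. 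Assembling the four computations yields \eqref{quadLoew}. The only genuine obstacle is recognizing the resolvent identity in the generic case; everything after that is routine bookkeeping, and it is worth remarking that the resulting block structure is exactly that of a Loewner matrix formed from the samples $\{H(\imunit\omega_{k})\}$ and $\{H(\imunit\zeta_{j})\}$, which is the bridge to the Loewner-framework connection promised in the introduction.
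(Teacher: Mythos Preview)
Your proof is correct and follows essentially the same approach as the paper: both compute $\quadLL_{k,j}$ as $(\bfe_k^T\quadL^*)\bE(\quadU\bfe_j)$ and handle the generic case via the same resolvent-type identity $(\imunit\omega_k\bE-\bA)^{-1}\bE(\imunit\zeta_j\bE-\bA)^{-1}=\frac{(\imunit\omega_k\bE-\bA)^{-1}-(\imunit\zeta_j\bE-\bA)^{-1}}{\imunit\zeta_j-\imunit\omega_k}$, while the remaining three cases are dispatched by cancelling $\bE$ against $\bE^{-1}$. Your parenthetical on the coincident-node case and the Loewner-matrix remark anticipate exactly what the paper discusses immediately after the algorithm and in \S\ref{ClassicLoewner}.
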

\begin{proof}
	Let $\bfe_k$ denote the $k$th canonical vector (of
	conforming length). From the definitions of 
	$\quadL$ and $\quadU$, and for indices $1 \leq k \leq \nqf$ and  $1 \leq j \leq \npf$, it follows that
	\begin{align*}
	\quadLL_{k,j} &= \bfe_k^T \quadLL \bfe_j =  
	(\bfe_k^T\quadL^*) \bE (\quadU\bfe_j)  = \phi_k \rho_j \bc^T (\imunit \omega_k \bE -\bA)^{-1}\,  \bE\, (\imunit \zeta_j \bE -\bA)^{-1} \bb \\ 
	& = \frac{\phi_k \rho_j}{\imunit\omega_k - \imunit \zeta_j} \bc^T(\imunit \omega_k \bE -\bA)^{-1} \left[ (\imunit\omega_k\bE-\bA) - (\imunit \zeta_j\bE-\bA) \right](\imunit \zeta_j \bE -\bA)^{-1}\bb \\ 
	& = -\frac{\phi_k \rho_j}{\imunit\omega_k - \imunit \zeta_j} \bc^T\left[ (\imunit\omega_k\bE-\bA)^{-1} - (\imunit \zeta_j\bE-\bA)^{-1} \right]\bb = - \phi_k \rho_j 
	\frac{H(\imunit\omega_k) - H(\imunit\zeta_j)}{\imunit\omega_k - \imunit \zeta_j}.
	\end{align*}
	Next, if $1 \leq k \leq \nqf = N_q$, and $j = N_p = \npf +1$ we can write that
	\begin{align*}
	\quadLL_{k,j} &= \bfe_k^T \quadLL \bfe_j =  
	(\bfe_k^T\quadL^*) \bE (\quadU\bfe_j)   = \phi_k  \bc^T (\imunit \omega_k \bE -\bA)^{-1}\,  \bE\, \rho_\infty \bE^{-1}\bb \\ 
	&= \phi_k \rho_\infty \bc^T (\imunit \omega_k \bE -\bA)^{-1} \bb = \phi_k \rho_\infty H(\imunit \omega_k).
	\end{align*}
	Similarly, we can derive $\quadLL_{k,j}$ for the case $k = N_q = \nqf +1$ and $1 \leq j \leq \npf = N_p$. Finally, for $ k = N_q = \nqf +1, \ j = N_p = \npf+1$, it follows that:
	\begin{align*}
	\quadLL_{k,j} =  
	(\bfe_k^T\quadL^*) \bE (\quadU\bfe_j)   = \phi_\infty \bc^T \bE^{-1}\,  \bE\, \rho_\infty \bE^{-1}\bb = \phi_\infty \rho_\infty \bc^T \bE^{-1} \bb = \phi_\infty \rho_\infty M_0,
	\end{align*}
	which completes the proof. \end{proof}

The observation that the quadrature-based approximate quantity $\quadLL=\quadL^*\bE \quadU$ may be obtained solely from transfer function samples is consistent with it being a
system invariant, producing
approximations to Hankel singular values that are also system invariants
and not dependent on any specific state-space realization.  Note that $\quadLL$ has at most rank-$n$, i.e., the McMillan degree of the underlying system. 

Of course, it will not be the case that the quadratures described in \eqref{quad_P} and  \eqref{quad_Q} are exact, so before we proceed we seek some assurance that the error induced by the quadrature approximation can be controlled.  We have the following result:
\begin{proposition} \label{prop:QuadError}
	Suppose the quadratures in \eqref{quad_P} and  \eqref{quad_Q} produce approximations $\quadP$ and 
	$\quadQ$ to $\bP$ and $\bQ$, respectively, that satisfy
	$\|\bQ-\quadQ\|_F\leq \frac{\delta}{1+\delta}\,\sigma_{\mathsf{min}}(\bQ)$ and 
	$\|\bP-\quadP\|_F\leq \frac{\delta}{1+\delta}\,\sigma_{\mathsf{min}}(\bP)$ for some $\delta\in(0,1)$ where
	$\sigma_{\mathsf{min}}(\ \cdot\ )$ denotes the smallest singular value. 
	Then there exist isometries  $\boldsymbol{\Psi}_p\in\mathbb{C}^{\np \times n}$ and $\boldsymbol{\Psi}_q\in\mathbb{C}^{\nq \times n}$ such that 
	\begin{equation} \label{quadBound}
	\|\quadL^* \bE \quadU-\boldsymbol{\Psi}_q(\bL^T\bE\bU)\boldsymbol{\Psi}_p^*\|_F
	\leq 2 \ \delta\ \|\bE\|_2\,\|\bL\|_2\,\|\bU\|_2.
	\end{equation}
	Let $\sigma_1\geq \sigma_2\geq \cdots\geq \sigma_n$ denote singular values of $\bL^T\bE\bU$ (i.e., Hankel singular values of  \eqref{OrigSys}), and let $\widetilde{\sigma}_1\geq \widetilde{\sigma}_2\geq \cdots\geq \widetilde{\sigma}_n$ denote the singular values of $\quadL^* \bE \quadU$.  Then,
	$$
	\left(\sum_{k=1}^n(\sigma_k-\widetilde{\sigma}_k)^2  \right)^{\frac12} \leq 2 \ \delta\ \|\bE\|_2\,\|\bL\|_2\,\|\bU\|_2.
	$$
\end{proposition}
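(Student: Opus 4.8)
The plan is to build the isometries $\boldsymbol{\Psi}_p,\boldsymbol{\Psi}_q$ from polar-type factorizations in such a way that the orthogonal degrees of freedom cancel and the whole left-hand side of \eqref{quadBound} collapses to a single difference $\quadQ^{1/2}\bE\quadP^{1/2}-\bQ^{1/2}\bE\bP^{1/2}$; the closing singular-value inequality then follows from \eqref{quadBound} by Mirsky's theorem. First I would note that since $\delta\in(0,1)$ the hypotheses give $\|\bP-\quadP\|_F<\sigma_{\mathsf{min}}(\bP)$ and $\|\bQ-\quadQ\|_F<\sigma_{\mathsf{min}}(\bQ)$, so Weyl's inequality makes $\quadP$ and $\quadQ$ symmetric positive definite (we take $\bP,\bQ\succ0$, as holds for a minimal realization; otherwise the hypotheses force exact quadrature on the relevant ranges and the argument only simplifies). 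Writing the (left) polar decompositions $\bU=\bP^{1/2}\bV_U$ and $\bL=\bQ^{1/2}\bV_L$ with $\bV_U,\bV_L$ real orthogonal, together with $\quadU=\quadP^{1/2}\bG_p$ and $\quadL=\quadQ^{1/2}\bG_q$ where $\bG_p=\quadP^{-1/2}\quadU\in\IC^{n\times\np}$ and $\bG_q=\quadQ^{-1/2}\quadL\in\IC^{n\times\nq}$ satisfy $\bG_p\bG_p^*=\bI_n$, $\bG_q\bG_q^*=\bI_n$ (using $\quadU\quadU^*=\quadP$, $\quadL\quadL^*=\quadQ$), I would set $\boldsymbol{\Psi}_p:=\bG_p^*\bV_U$ and $\boldsymbol{\Psi}_q:=\bG_q^*\bV_L$. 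Orthogonality of $\bV_U,\bV_L$ then gives $\boldsymbol{\Psi}_p^*\boldsymbol{\Psi}_p=\bI_n$ and $\boldsymbol{\Psi}_q^*\boldsymbol{\Psi}_q=\bI_n$, so these are isometries, and the cancellations $\bU\boldsymbol{\Psi}_p^*=\bP^{1/2}\bG_p$, $\bL\boldsymbol{\Psi}_q^*=\bQ^{1/2}\bG_q$ yield
\[
\quadL^*\bE\quadU-\boldsymbol{\Psi}_q(\bL^T\bE\bU)\boldsymbol{\Psi}_p^*=\bG_q^*\bigl(\quadQ^{1/2}\bE\quadP^{1/2}-\bQ^{1/2}\bE\bP^{1/2}\bigr)\bG_p .
\]
Since $\bG_p,\bG_q$ have orthonormal rows, it then remains only to bound $\|\quadQ^{1/2}\bE\quadP^{1/2}-\bQ^{1/2}\bE\bP^{1/2}\|_F$.

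For this I would telescope, $\quadQ^{1/2}\bE\quadP^{1/2}-\bQ^{1/2}\bE\bP^{1/2}=\quadQ^{1/2}\bE(\quadP^{1/2}-\bP^{1/2})+(\quadQ^{1/2}-\bQ^{1/2})\bE\bP^{1/2}$, and apply the standard Lipschitz bound for the matrix square root on positive-definite matrices, $\|\bA^{1/2}-\bB^{1/2}\|_F\le\|\bA-\bB\|_F/\bigl(\sqrt{\sigma_{\mathsf{min}}(\bA)}+\sqrt{\sigma_{\mathsf{min}}(\bB)}\bigr)$ — which follows by reading $\bA-\bB=\bA^{1/2}(\bA^{1/2}-\bB^{1/2})+(\bA^{1/2}-\bB^{1/2})\bB^{1/2}$ as a Sylvester equation for $\bA^{1/2}-\bB^{1/2}$ and observing that its vectorized coefficient matrix $\bI\otimes\bA^{1/2}+\bB^{1/2}\otimes\bI$ is symmetric with smallest eigenvalue $\sqrt{\sigma_{\mathsf{min}}(\bA)}+\sqrt{\sigma_{\mathsf{min}}(\bB)}$. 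Feeding in the hypotheses $\|\bP-\quadP\|_F\le\tfrac{\delta}{1+\delta}\sigma_{\mathsf{min}}(\bP)$, $\|\bQ-\quadQ\|_F\le\tfrac{\delta}{1+\delta}\sigma_{\mathsf{min}}(\bQ)$, the factor identities $\sigma_{\mathsf{min}}(\bP)=\sigma_{\mathsf{min}}(\bU)^2$, $\|\bP^{1/2}\|_2=\|\bU\|_2$ (and their $\bQ,\bL$ analogues), and $\|\quadQ^{1/2}\|_2\le\|\bL\|_2+\|\quadQ^{1/2}-\bQ^{1/2}\|_F$, a short computation gives $\|\quadQ^{1/2}\bE\quadP^{1/2}-\bQ^{1/2}\bE\bP^{1/2}\|_F\le\tfrac{\delta(2+3\delta)}{(1+\delta)^2}\,\|\bE\|_2\|\bL\|_2\|\bU\|_2$; since $\tfrac{2+3\delta}{(1+\delta)^2}\le 2$ for every $\delta>0$, this establishes \eqref{quadBound}.

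For the singular-value statement I would use that conjugating by isometries leaves nonzero singular values unchanged: with $\bM:=\bL^T\bE\bU$, the relations $\boldsymbol{\Psi}_p^*\boldsymbol{\Psi}_p=\bI_n$ and $\boldsymbol{\Psi}_q^*\boldsymbol{\Psi}_q=\bI_n$ give $(\boldsymbol{\Psi}_q\bM\boldsymbol{\Psi}_p^*)(\boldsymbol{\Psi}_q\bM\boldsymbol{\Psi}_p^*)^*=\boldsymbol{\Psi}_q\bM\bM^*\boldsymbol{\Psi}_q^*$, which has the same nonzero eigenvalues as $\bM\bM^*$. Hence, as an $\nq\times\np$ matrix, $\boldsymbol{\Psi}_q(\bL^T\bE\bU)\boldsymbol{\Psi}_p^*$ has singular values $\sigma_1,\dots,\sigma_n$ padded with zeros, while $\quadL^*\bE\quadU$ (of rank at most $n$) has singular values $\widetilde{\sigma}_1,\dots,\widetilde{\sigma}_n$ padded with zeros. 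Applying Mirsky's theorem (the Hoffman--Wielandt inequality for singular values) to these two matrices of equal size and discarding the nonnegative tail terms gives $\bigl(\sum_{k=1}^n(\sigma_k-\widetilde{\sigma}_k)^2\bigr)^{1/2}\le\|\quadL^*\bE\quadU-\boldsymbol{\Psi}_q(\bL^T\bE\bU)\boldsymbol{\Psi}_p^*\|_F\le 2\,\delta\,\|\bE\|_2\|\bL\|_2\|\bU\|_2$, which is the claim.

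The step I expect to be the main obstacle is the middle one: arranging the isometries precisely so that the whole expression collapses to $\quadQ^{1/2}\bE\quadP^{1/2}-\bQ^{1/2}\bE\bP^{1/2}$, and then pushing the square-root perturbation bound through with constants sharp enough to reach the clean factor $2\delta$ — the elementary inequality $\tfrac{2+3\delta}{(1+\delta)^2}\le 2$ is exactly what makes the bookkeeping close. By contrast, the positive-definiteness reductions at the start and the concluding Mirsky argument are routine once \eqref{quadBound} is in hand.
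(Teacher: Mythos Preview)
Your proof is correct and takes a genuinely different route from the paper's. The paper invokes Sun's Cholesky--factor perturbation theorem \cite{sun1991perturbation} to write $\quadU^{*}=\boldsymbol{\Psi}_p(\bU+\Delta\bU)^{T}$ and $\quadL^{*}=\boldsymbol{\Psi}_q(\bL+\Delta\bL)^{T}$ with $\|\Delta\bU\|_F/\|\bU\|_2,\ \|\Delta\bL\|_F/\|\bL\|_2\le \delta/\sqrt{2}$, then expands $\quadL^{*}\bE\quadU$ into four terms and bounds each, arriving at the intermediate constant $\sqrt{2}\,\delta+\tfrac{\delta^{2}}{2}<2\delta$. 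You instead build the isometries explicitly from polar factorizations so that the entire difference collapses to $\bG_q^{*}\bigl(\quadQ^{1/2}\bE\quadP^{1/2}-\bQ^{1/2}\bE\bP^{1/2}\bigr)\bG_p$, and then control the square--root perturbation via a clean Sylvester--equation Lipschitz estimate, landing on $\tfrac{\delta(2+3\delta)}{(1+\delta)^{2}}\le 2\delta$. Your argument is fully self--contained (no appeal to an external perturbation theorem) and the single--term reduction is elegant; the paper's approach trades that for a direct citation and happens to yield a slightly sharper constant for small $\delta$ (roughly $\sqrt{2}\,\delta$ versus your $2\delta$), though both meet the stated bound. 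For the singular--value consequence the paper cites Lidskii's majorization theorem while you use Mirsky/Hoffman--Wielandt --- these are the same inequality and the conclusions coincide.
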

The proof of this proposition may be found in Appendix B. 

This leads us to consider a \emph{quadrature-based} approximation to \BT 
that uses $\quadU$ and $\quadL$ in lieu of $\bU$ and $\bL$ in Step \ref{svdstep} of Algorithm \ref{origbt},
replacing the SVD of $\bL^T\bE\bU$ with
the SVD of the (nearly) unitarily equivalent matrix,   
$\quadLL=\quadL^* \bE \quadU$ that can be computed \emph{nonintrusively}, generated essentially from transfer function measurements.  

{Recall the singular values of $\bL^T\bE\bU$ (the {Hankel singular values}) are independent of system realization.}  Our nonintrusive, quadrature-based approach provides a new way of accessing the Hankel singular values without recourse to a system realization.  

\subsubsection{Numerical example}
\label{sec:exmphankel}
We use the heat model \cite{Niconet} (referred to as \textsf{[heat]} here),
describing thermal response of a thin rod - the system is SISO 
with dimension $n = 200$.

In Fig. \ref{fig:1}, we depict the true Hankel singular values together with the approximate ones obtained via the non-intrusive quadrature-based approach as described above. {We use two representative quadrature rules  described in Appendix A, labeled as 
	{\footnotesize \textsf{[ExpTrap]}} and {\footnotesize {\textsf{{[B/CC]}}}}.}
{As the figures illustrate, the quadrature-based estimates for the Hankel singular values approximate the true values  accurately.}

\begin{figure}[ht]
	\hspace{-5.5mm}
	\includegraphics[scale=0.205]{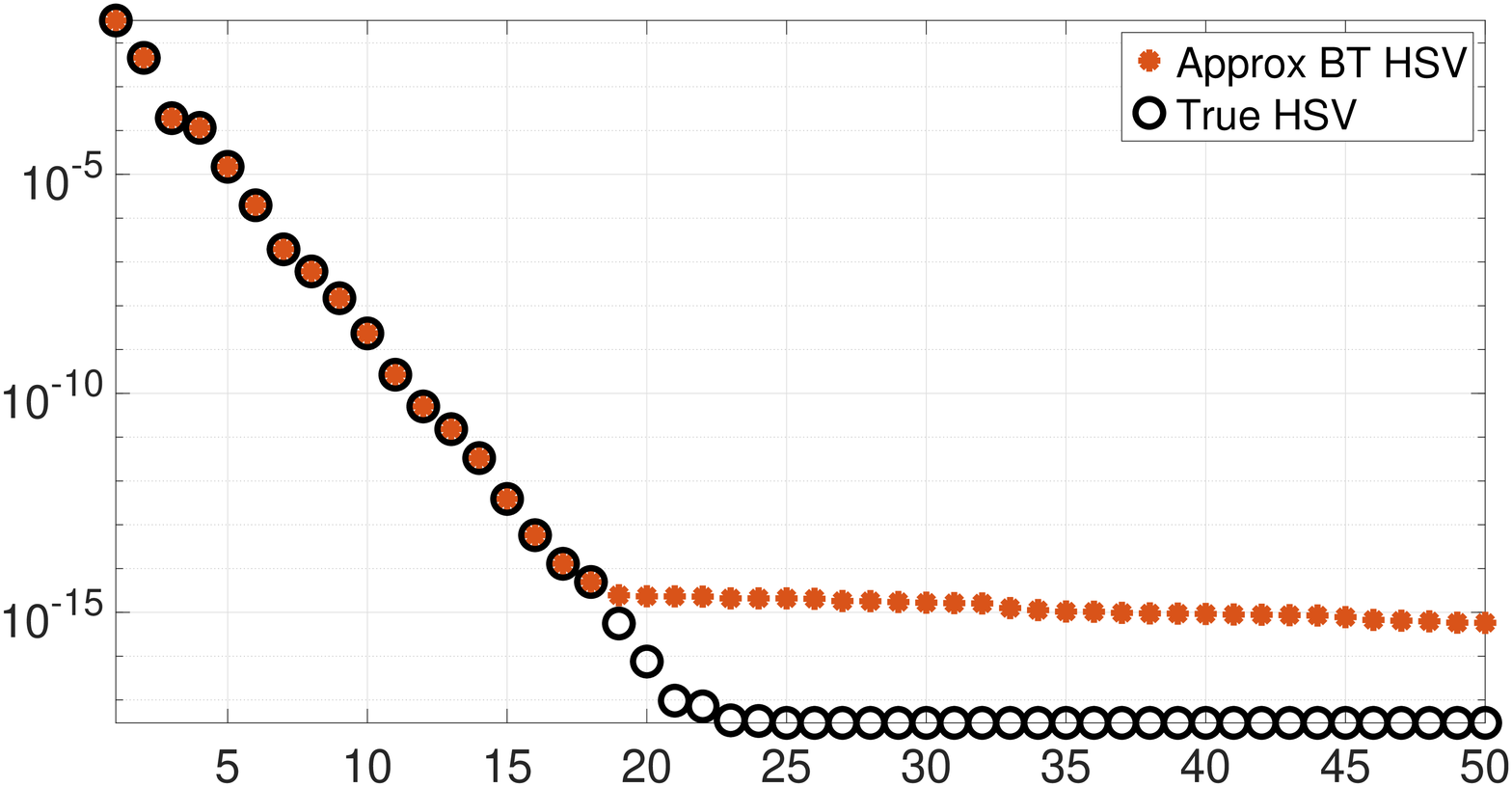} \hspace{-7.5mm}
	\includegraphics[scale=0.205]{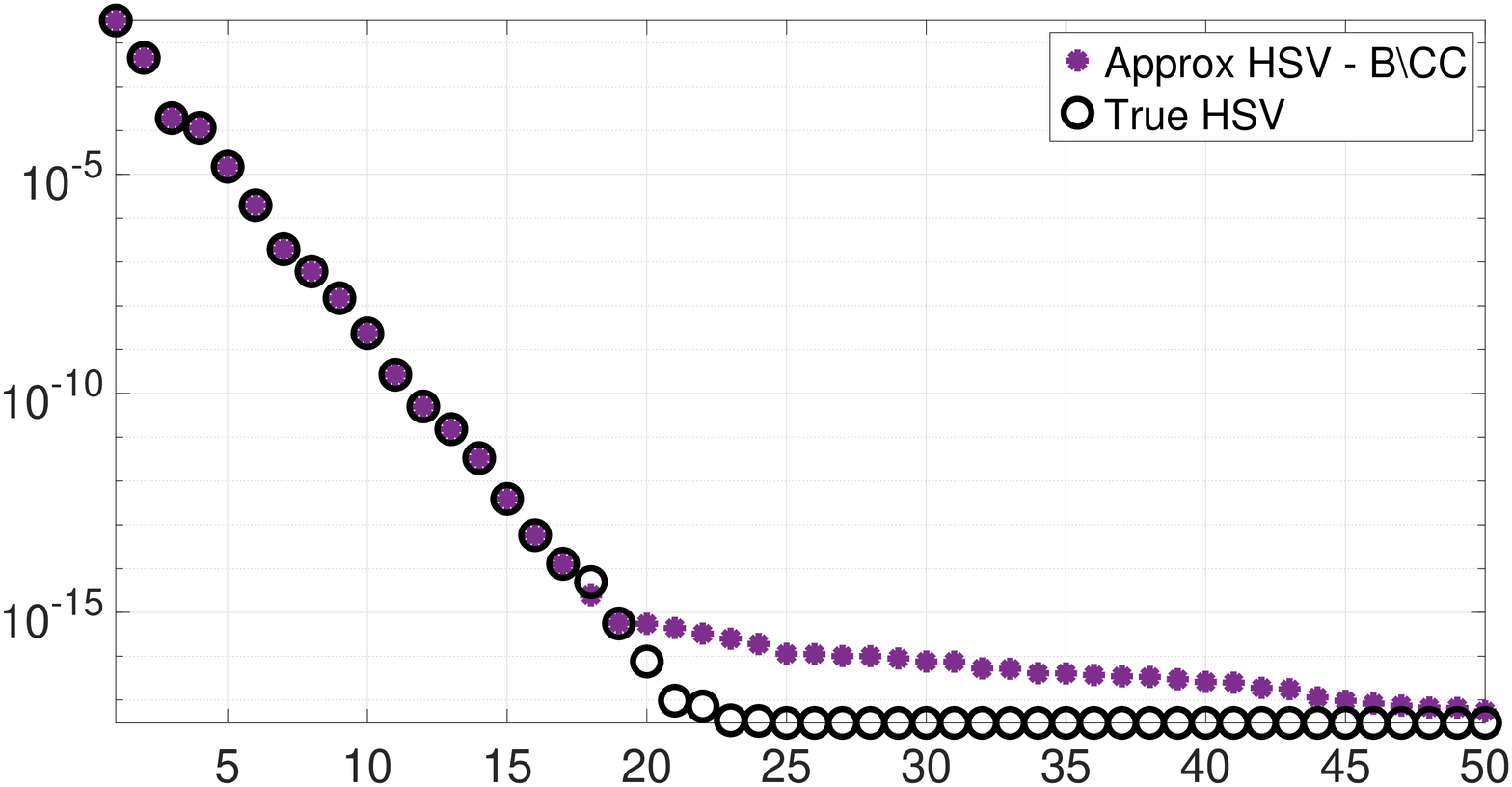}
	\vspace{-3mm}
	\caption{The computed (Hankel) singular values for the NICONET {\textrm\textsf{[heat]}} model using a trapezoid quadrature with 120 nodes  in the interval $[10^{-3},10^3] \imunit $  (left) and using 120 nodes with $L=4$ and $L = 3$ in {\textrm\textsf{[B/CC]}} for $\quadU$ and $\quadL$, respectively (right).}
	\label{fig:1}
	\vspace{-2mm}
\end{figure}

\subsection{Data-driven Balanced Truncation} \label{QuadBTruncation}
Although \BT yields reduced models in a manner that reflects system invariants and in that sense is independent of system realizations, \BT typically does require explicit access to a state-space representation and thus, in a practical sense, remains tied to system realizations. 
In particular, note that in Step \ref{WrVrstep} of Algorithm \ref{origbt}, model reduction bases $\bW_r$ and $\bV_r$ are constructed and then used to project state space quantities.  Through our (implicit) use of quadrature approximations for the reachability and observability Gramians, we are able to avoid any explicit reference to model reduction bases or to specific system realizations, and effect \BT in a coordinate system determined solely by input-output data.

Notice that in Step \ref{project} of Algorithm  \ref{origbt}, we have 
$
\bA_r = 	\bS_1^{-1/2}\bZ_1^T\,(\bL^T\bA\bU)\,\bY_1\bS_1^{-1/2},
$
and so the construction of $\bA_r$ makes reference to both $\bL^T \bA \bU$ and 
the SVD of $\bL^T\bE\bU$,  reflecting equivalent basis transformations on both $\bA$ and $\bE$.  Proposition \ref{prop:quadL} showed how we can replace
$\bL^T \bE \bU$ with the (nearly) unitarily equivalent matrix,   
$\quadLL=\quadL^* \bE \quadU$ (which could then be computed directly from data). 
So, we might expect that we could construct a realization equivalent to $\bA_r$ 
by replicating on $\bA$ an equivalent change of basis that had been visited upon $\bE$ by $\quadLL=\quadL^* \bE \quadU$. 
Pleasantly, we will find this to be the case by examining $\quadMM=\quadL^* \bA \quadU$.
Similar observations hold for $\bb_r$ and $\bc_r$.

\begin{proposition} \label{prop:quadM}
	Let $\quadU$ and $\quadL$ be as defined in 
	\eqref{quad_U} and \eqref{quad_L}. 
	Define the matrix $\quadMM =\quadL^* \bA \quadU \in \IC^{\nq \times \np}$. Then, the $(k,j)$ entry of $\quadMM$, 
	for $1\leq k\leq\nq$ and $1\leq j\leq \np$, is given by
	\begin{equation} \label{quadshiftLoew}
	\quadMM_{k,j} = \begin{cases} - \phi_k \rho_j \displaystyle \frac{\imunit\omega_k H(\imunit\omega_k) - \imunit \zeta_j H(\imunit\zeta_j)}{\imunit\omega_k - \imunit \zeta_j}, \ \text{for} \ 1 \leq k \leq \nqf, \ 1 \leq j \leq \npf, \\[2mm]
	\phi_k \rho_\infty \left( \imunit\omega_k H(\imunit\omega_k)-M_0\right), \ \text{for} \ 1 \leq k \leq \nqf = N_q, \  j = N_p = \npf +1, \\[2mm]
	\phi_\infty \rho_j \left( \imunit\zeta_j H(\imunit\zeta_j)-M_0 \right), \ \text{for} \  k = N_q = \nqf +1, \ 1 \leq j \leq \npf = N_p, \\[2mm]
	\phi_\infty \rho_\infty M_1, \ \text{for} \  k = N_q = \nqf +1, \ j = N_p = \npf+1.
	\end{cases}
	\end{equation}
	Likewise, defining $\quadLb = \quadL^*\bb
	\in \IC^{\nq \times 1}$ and $\quadcU^T = \bc^T \quadU\in \IC^{1 \times \np}$, we find
	\begin{eqnarray} 
	\quadLb_k & = & \begin{cases} \phi_k H(\imunit \omega_k), \ \text{for} \ 1 \leq k \leq \nqf,\\
	\phi_{\infty} M_0, \ \text{for} \ k = N_q = \nqf +1.
	\end{cases}
	\label{quadh} ~~\mbox{and} \\
	\quadcU_j & = &     \begin{cases} \rho_j H(\imunit \zeta_j), \ \text{for} \ 1 \leq j \leq \np, \\
	\rho_{\infty} M_0, \ \text{for} \  j= N_p = \npf +1.
	\end{cases}
	\label{quadg}
	\end{eqnarray}
\end{proposition}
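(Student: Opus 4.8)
The plan is to follow the template of the proof of Proposition~\ref{prop:quadL} essentially verbatim, extracting each entry as $\quadMM_{k,j} = \bfe_k^T\quadMM\bfe_j = (\bfe_k^T\quadL^*)\,\bA\,(\quadU\bfe_j)$ and substituting the explicit columns of $\quadU$ from \eqref{quad_U} and the rows of $\quadL^*$ from \eqref{quad_L}. For the generic block $1\le k\le\nqf$, $1\le j\le\npf$ this gives $\quadMM_{k,j} = \phi_k\rho_j\,\bc^T(\imunit\omega_k\bE-\bA)^{-1}\bA(\imunit\zeta_j\bE-\bA)^{-1}\bb$, and the only new ingredient needed is an algebraic identity for $\bA$ sandwiched between the two resolvents, playing the role that the identity $\bE = \frac{1}{\imunit\omega_k-\imunit\zeta_j}\big[(\imunit\omega_k\bE-\bA)-(\imunit\zeta_j\bE-\bA)\big]$ played for $\quadLL$. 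Since the two node sets are taken disjoint (so $\imunit\omega_k\neq\imunit\zeta_j$, exactly as implicitly assumed in Proposition~\ref{prop:quadL}), I would use $\bA = \frac{1}{\imunit\omega_k-\imunit\zeta_j}\big[\imunit\zeta_j(\imunit\omega_k\bE-\bA) - \imunit\omega_k(\imunit\zeta_j\bE-\bA)\big]$; inserting this between the two resolvents telescopes the product to $\frac{1}{\imunit\omega_k-\imunit\zeta_j}\big[\imunit\zeta_j(\imunit\zeta_j\bE-\bA)^{-1} - \imunit\omega_k(\imunit\omega_k\bE-\bA)^{-1}\big]$, and left/right multiplication by $\phi_k\bc^T$ and $\rho_j\bb$, using $\bc^T(\imunit\zeta_j\bE-\bA)^{-1}\bb = H(\imunit\zeta_j)$ and $\bc^T(\imunit\omega_k\bE-\bA)^{-1}\bb = H(\imunit\omega_k)$, produces precisely $-\phi_k\rho_j\,\frac{\imunit\omega_k H(\imunit\omega_k)-\imunit\zeta_j H(\imunit\zeta_j)}{\imunit\omega_k-\imunit\zeta_j}$.

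For the rows and columns attached to the ``node at infinity'' there is no difference quotient, because one of the two resolvents is replaced by $\bE^{-1}$; there I would instead use the one-sided identity $(\imunit\omega_k\bE-\bA)^{-1}\bA = \imunit\omega_k(\imunit\omega_k\bE-\bA)^{-1}\bE - \bI$ together with its transpose-side mirror $\bA(\imunit\zeta_j\bE-\bA)^{-1} = \imunit\zeta_j\bE(\imunit\zeta_j\bE-\bA)^{-1} - \bI$. For $j=\np=\npf+1$ and $1\le k\le\nqf$ this turns $\phi_k\rho_\infty\,\bc^T(\imunit\omega_k\bE-\bA)^{-1}\bA\bE^{-1}\bb$ into $\phi_k\rho_\infty\big(\imunit\omega_k H(\imunit\omega_k)-M_0\big)$ after the $\bE\bE^{-1}$ cancellation and recognizing $\bc^T\bE^{-1}\bb = M_0$; the case $k=\nq$, $1\le j\le\npf$ is symmetric and yields $\phi_\infty\rho_j\big(\imunit\zeta_j H(\imunit\zeta_j)-M_0\big)$; and in the corner $k=\nq$, $j=\np$ both resolvents collapse, leaving $\phi_\infty\rho_\infty\,\bc^T\bE^{-1}\bA\bE^{-1}\bb = \phi_\infty\rho_\infty M_1$ directly by the definition $M_1 = \bc^T\bE^{-1}\bA\bE^{-1}\bb$. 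The claims for $\quadLb=\quadL^*\bb$ and $\quadcU^T=\bc^T\quadU$ then need no manipulation at all: reading off row $k$ of $\quadL^*\bb$ gives $\phi_k\bc^T(\imunit\omega_k\bE-\bA)^{-1}\bb = \phi_k H(\imunit\omega_k)$ for $1\le k\le\nqf$ and $\phi_\infty\bc^T\bE^{-1}\bb = \phi_\infty M_0$ for the last row, and the entries of $\quadcU$ are obtained in the same way from the columns of $\quadU$.

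I do not expect a genuine obstacle here. Once the $\bA$-decomposition above is in hand the proposition is pure bookkeeping, and the only points that actually require care are (i) keeping the weights $\phi_k,\rho_j,\phi_\infty,\rho_\infty$ and the index conventions $\nq\in\{\nqf,\nqf+1\}$, $\np\in\{\npf,\npf+1\}$ straight across the four cases, and (ii) checking that the three ``node at infinity'' formulas are the correct analogues of the generic one, with $\imunit\omega_k H(\imunit\omega_k)$ and $M_0$ entering as expected and $M_1$ appearing in the corner as the natural second Markov parameter. If one wanted to drop the disjointness of the node sets, the diagonal entries $\omega_k=\zeta_j$ would be handled by the usual confluent limit, using $\frac{d}{ds}(s\bE-\bA)^{-1} = -(s\bE-\bA)^{-1}\bE(s\bE-\bA)^{-1}$, to obtain $-\phi_k\rho_j\,\frac{d}{ds}\!\big(sH(s)\big)\big|_{s=\imunit\omega_k}$; under the standing assumption this case simply does not arise.
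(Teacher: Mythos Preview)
Your proposal is correct and follows essentially the same route as the paper: the paper likewise reduces the generic case to the resolvent identity $\varsigma(\varsigma\bE-\bA)^{-1}-\mu(\mu\bE-\bA)^{-1}=-(\varsigma-\mu)(\varsigma\bE-\bA)^{-1}\bA(\mu\bE-\bA)^{-1}$ (derived via $\bT=\bA\bE^{-1}$, which is just a repackaging of your decomposition of $\bA$), and handles the ``node at infinity'' rows/columns with the same one-sided identity $(\imunit\omega_k\bE-\bA)^{-1}\bA\bE^{-1}=\imunit\omega_k(\imunit\omega_k\bE-\bA)^{-1}-\bE^{-1}$ and the direct evaluation $\bc^T\bE^{-1}\bA\bE^{-1}\bb=M_1$. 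Your treatment of $\quadLb$, $\quadcU$, and the confluent-limit remark also matches the paper.
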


\begin{proof}
	The proof is similar to that of Proposition \ref{prop:quadL}. Notice first that for any square matrix, $\bT$, and any $\varsigma,\,\mu \in\mathbb{C}$ that are not eigenvalues of $\bT$, one may verify
	$$
	\varsigma(\varsigma\bI-\bT)^{-1}-\mu(\mu\bI-\bT)^{-1}=-(\varsigma-\mu)(\varsigma\bI-\bT)^{-1}\bT(\mu\bI-\bT)^{-1}. 
	$$
	By setting $\bT=\bA\bE^{-1}$ and simplifying, one finds
	$$
	\varsigma(\varsigma\bE-\bA)^{-1}-\mu(\mu\bE-\bA)^{-1}=
	-(\varsigma-\mu)(\varsigma\bE-\bA)^{-1}\bA(\mu\bE-\bA)^{-1}. 
	$$
	Now, using the definitions of 
	$\quadL$ and $\quadU$,
	\begin{align*}
	\quadMM_{k,j} &= \bfe_k^T \quadMM \bfe_j =  
	(\bfe_k^T\quadL^*) \bA (\quadU\bfe_j) = \phi_k \rho_j \bc^T (\imunit \omega_k \bE -\bA)^{-1}  \bA (\imunit \zeta_j \bE -\bA)^{-1} \bb \\ 
	& = -\frac{\phi_k \rho_j}{\imunit\omega_k - \imunit \zeta_j} \bc^T\left[ \imunit\omega_k (\imunit\omega_k\bE-\bA)^{-1} - \imunit \zeta_j(\imunit\zeta_j\bE-\bA)^{-1} \right]\bb \\
	&= - \phi_k \rho_j \frac{\imunit\omega_k H(\imunit\omega_k) - \imunit \zeta_j H(\imunit\zeta_j)}{\imunit\omega_k - \imunit \zeta_j}.
	\end{align*}
	Next, if $1 \leq k \leq \nqf = N_q$, and $j = N_p = \npf +1$ we can write that
	\begin{align*}
	\quadMM_{k,j} &= \bfe_k^T \quadMM \bfe_j =  
	(\bfe_k^T\quadL^*) \bA (\quadU\bfe_j)  = \phi_k  \bc^T (\imunit \omega_k \bE -\bA)^{-1}  \bA \rho_\infty \bE^{-1} \bb \\ 
	&= \phi_k \rho_\infty  \bc^T\left[  \imunit\omega_k (i\omega_k\bE-\bA)^{-1} - \bE^{-1} \right]\bb =  \phi_k \rho_\infty \left( \imunit\omega_k H(\imunit\omega_k) - M_0\right).
	\end{align*}
	Similarly, we can find the exact derivation of $\quadLL_{k,j}$ in \eqref{quadshiftLoew} for the case $k = N_q = \nqf +1, \ 1 \leq j \leq \npf = N_p$. Next, for $ k = N_q = \nqf +1, \ j = N_p = \npf+1$, it follows that:
	\begin{align*}
	\quadMM_{k,j} &= \bfe_k^T \quadMM \bfe_j =  
	(\bfe_k^T\quadL^*) \bA (\quadU\bfe_j)   = \phi_\infty \bc^T \bE^{-1}\,  \bA\, \rho_\infty \bE^{-1}\bb \\ &= \phi_\infty \rho_\infty \bc^T \bE^{-1} \bA \bE^{-1} \bb = \phi_\infty \rho_\infty M_1.
	\end{align*}
	Finally, one can write
	\begin{align*}
	\quadLb_k = \bfe_k^T \quadLb =(\bfe_k^T\quadL) \bb = \begin{cases}   \phi_k\bc^T (\imunit \omega_k \bE-\bA)^{-1} \bb =  \phi_k H(\imunit\omega_k), \ \text{for} \ 1 \leq k \leq \nqf, \\
	\phi_\infty \bc^T \bE^{-1} \bb =  \phi_\infty M_0,  \ \text{for} \ k = N_q = \nqf +1.
	\end{cases}
	\end{align*}
	and
	\begin{align*}
	\quadcU_j = \quadcU^T \bfe_j = \bc^T 
	(\quadU \bfe_j) = \begin{cases}
	\rho_j \bc^T   (\imunit \zeta_j\bE -\bA)^{-1} \bb =  \rho_j H(\imunit\zeta_j), \ \text{for} \ 1 \leq j \leq \np, \\
	\rho_\infty \bc^T  \bE^{-1} \bb =  \rho_\infty M_0,  \ \text{for} \  j= N_p = \npf +1,
	\end{cases}
	\end{align*}
	completing the proof.
\end{proof}

We have been able to replace  
$\bL^T \bE \bU$, $\bL^T \bA \bU$, $\bL^T \bb$, and $\bc^T \bU$  in  Algorithm \ref{origbt}
with equivalent quantities that are derivable directly from data (i.e., from transfer function samples).
We assemble this together in Algorithm \ref{quadbt}, yielding a computationally feasible strategy for
quadrature-based balanced truncation (\QBT), requiring only transfer function sampling (i.e., no 
access to internal properties), and fully capable of recovering models equivalent to \BT reduced models to any accuracy desired.

\begin{algorithm}[htp] 
	\caption{Quadrature-based (data-driven) balanced truncation (\QBT)}  
	\label{quadbt}                                     
	\algorithmicrequire~LTI system described through a transfer function evaluation map, $H(s)$; \\
	\hspace*{5mm} quadrature nodes, $\zeta_j$, and weights, $\rho_j$, for $j=1,2,\ldots,\np$; \\
	\hspace*{5mm} quadrature nodes, $\omega_k$, and weights, $\phi_k$, for $k=1,2,\ldots,\nq$, \\
	\hspace*{5mm} and a truncation index, $1\leq r\leq \min(\np,\nq)$.
	
	\algorithmicensure~{\QBT} reduced-system:  $\quadAr\in\mathbb{R}^{r \times r}, \ \quadbr, \quadcr \in \mathbb{R}^r$.
	
	\begin{algorithmic} [1]                                        
		\STATE \label{sample} Sample transfer function values, $\{H(\imunit \zeta_j)\}_{j=1}^{\np}$  and $\{H(\imunit \omega_k)\}_{k=1}^{\nq}$.  Using the samples and quadrature weights, $\{\rho_j\}$ and $\{\phi_k\}$, 
		construct $\boldsymbol{\quadLL} \in \IC^{\nq \times \np}$, $\boldsymbol{\quadMM} \in \IC^{\nq \times \np}$, $\quadLb$, and $\quadcU$  as in \eqref{quadLoew}, \eqref{quadshiftLoew}, \eqref{quadh}, and \eqref{quadg}, respectively, 
		\STATE Compute the SVD of $\quadLL$:
		\begin{equation}\label{SVD_BT2}
		\quadLL  = \left[ \begin{matrix}
		\quadZ_1 & \quadZ_{2}
		\end{matrix}  \right] \left[ \begin{matrix}
		\quadS_1 & \\ & \quadS_{2}
		\end{matrix}  \right] \left[ \begin{matrix}
		\quadY_1^* \\ \quadY_{2}^*
		\end{matrix}  \right],
		\end{equation}
		where $\quadS_1 \in \mathbb{R}^{r \times r}$ and $\quadS_{2} \in \mathbb{R}^{(\nq-r) \times (\np-r)}$.
		
		\STATE Construct the reduced order matrices:
		\begin{equation} \label{quadbtar}
		\begin{array}{cc} \quadEr= \quadS_1^{-1/2} \quadZ_1^* \ \quadLL \ \quadY_1   \quadS_1^{-1/2} = \bI_r, 
		\quad & \quad 
		\quadAr = \quadS_1^{-1/2} \quadZ_1^* \ \quadMM \ \quadY_1   \quadS_1^{-1/2}, \\[2mm]
		\quadbr =  \quadS_1^{-1/2} \quadZ_1^* \quadLb,\qquad \mbox{and} &\quad 
		\quadcr =  \quadcU^T \quadY_1   \quadS_1^{-1/2}.
		\end{array}
		\end{equation}
	\end{algorithmic}
\end{algorithm}
Note the contrast in the construction of $\bA_r$ in \eqref{btar} with that of $\quadAr$ in \eqref{quadbtar}. 
The former computes the inner term $\bL^T\bA\bU$ and requires access to internal quantities characterizing the dynamics; the latter approximates this term through $\quadMM$ and may be 
constructed directly from transfer function samples. Indeed, the quantities $\quadS_1^{-1/2} \quadZ_1^*$ and $\quadY_1 \quadS_1^{-1/2}$ are available from the SVD of the data matrix $\quadLL$ in \eqref{quadLoew}.

\begin{remark}
	In the analysis above, we have assumed that the quadrature nodes $\{\zeta_j\}$ for $\quadP$  
	and $\{\omega_k\}$ for $\quadQ$ are distinct from each other. 
	However, it may be more convenient to choose the same quadrature nodes (and weights) for both $\quadP$ and $\quadQ$. This leads to only minor changes
	to  Algorithm \ref{quadbt}.
	The vectors $\quadLb$ in \eqref{quadh} and $\quadcU$ in \eqref{quadg} stay unchanged. 
	The principal change occurs in the diagonal entries of $\boldsymbol{\quadLL}$ and
	$\boldsymbol{\quadMM}$: if $\omega_k = \zeta_k$, then the $(k,k)$ entry of  $\boldsymbol{\quadLL}$ in \eqref{quadLoew} becomes
	$$\boldsymbol{\quadLL}_{k,k} = 
	-\phi_k \rho_k H'(\imunit \omega_k),
	$$  
	where  $H'$ denotes the derivative of $H$. The $(k,k)$ entry of  $\boldsymbol{\quadMM}$ in
	\eqref{quadshiftLoew} is replaced by
	$$\boldsymbol{\quadMM}_{k,k} = 
	-\phi_k \rho_k \left(\omega_k H'(\imunit \omega_k)
	+ H(\imunit \omega_k) \right).
	$$  
	The remaining entries in $\boldsymbol{\quadLL}$ and 
	$\boldsymbol{\quadMM}$ are unchanged. 
\end{remark}

\subsubsection{Numerical examples - the SISO case}
\label{sec:firstcomp}
In this section we will test the approximation capability of our newly proposed method \QBT and compare it with that of classical \BT. We employ the following abbreviations: 
\begin{enumerate}[leftmargin=12pt,topsep=1pt,itemsep=-0.5ex,partopsep=0ex,parsep=1ex]
	\item \textsf{[BT-classic]}: The classical \BT approach  (Algorithm \ref{origbt}).
	\item \textsf{[ExpTrap-N]}: \QBT 
	(Algorithm \ref{quadbt})
	via the exponential trapezoid rule with $N$ points (without derivative samples); {more details may be found in Appendix A.} 
	\item \textsf{[B/CC-N]}: \QBT (Algorithm \ref{quadbt}) via Boyd/ Clenshaw-Curtis rule with $N$ points (without derivative samples); {more details may be found in Appendix A.} 
\end{enumerate}

We first consider the [\textsf{heat}] model from
\S\ref{sec:exmphankel}.  We use first $60$ and then $120$  logarithmically-spaced points in the interval $[10^{-3}, 10^3] \imunit$  as nodes for  [\textsf{ExpTrap}] (the left and right nodes are different). In [\textsf{B/CC}], choose  $L=4$ and $L = 3$  for $\quadU$ and $\quadL$, respectively, using again first 60 and then 120 nodes. We  vary the reduction order from $r=2$ to $r=14$ in increments of two and collect both the $\cH_\infty$  and  the $\cH_2$ norms of the error systems $\hot{\bH(s)-\bH_r(s)}$ corresponding to \textsf{[BT-classic]} and both quadrature schemes. The results are presented in Fig. \ref{fig:3}, illustrating that both
\QBT models (with \textsf{[ExpTrap-N]} and \textsf{[B/CC-N]}) nearly replicate the performance of \textsf{[BT-classic]}.

\begin{figure}[ht]
	\hspace{-7mm}
	\includegraphics[scale=0.205]{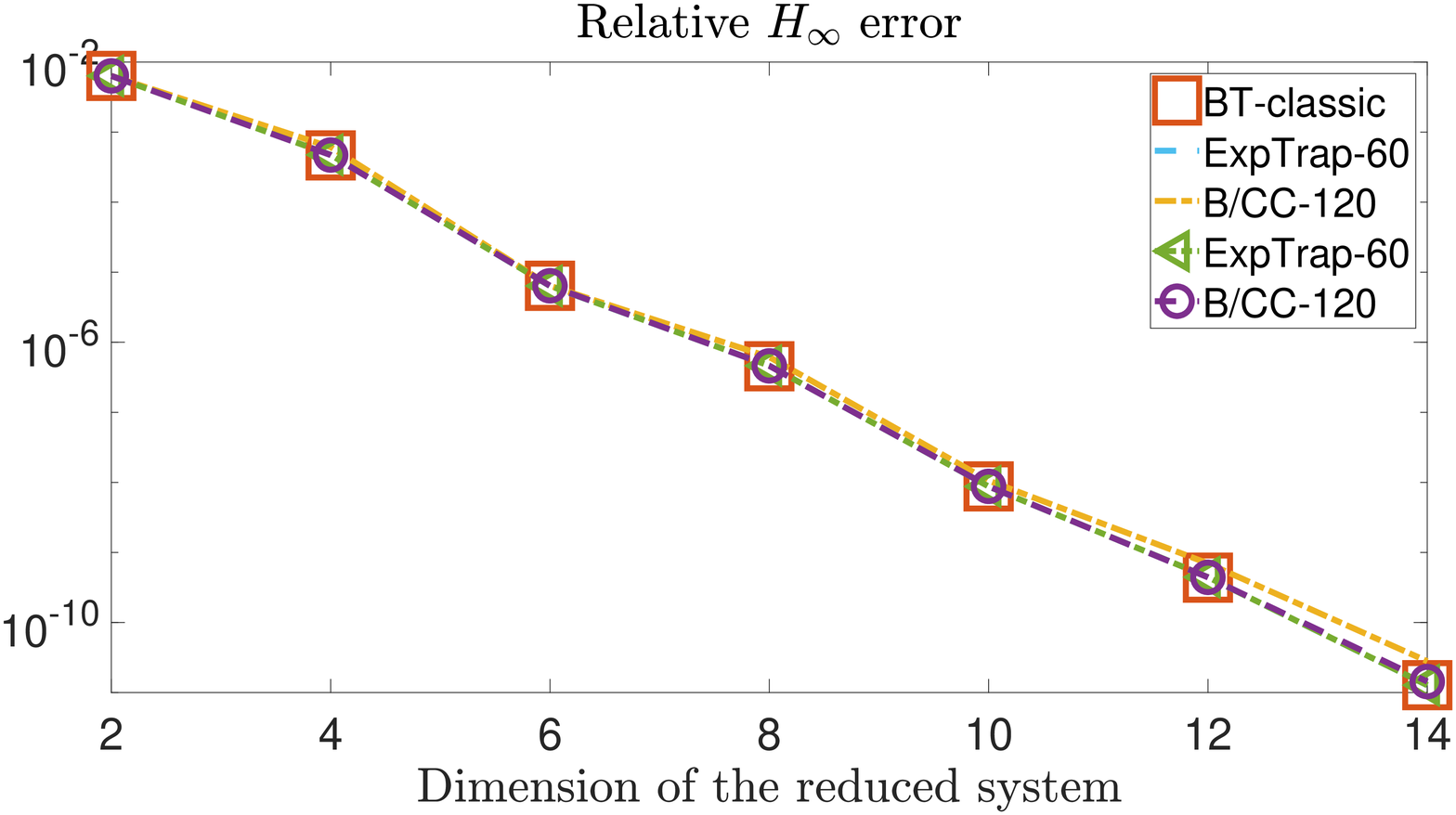}
	\hspace{-6mm}
	\includegraphics[scale=0.205]{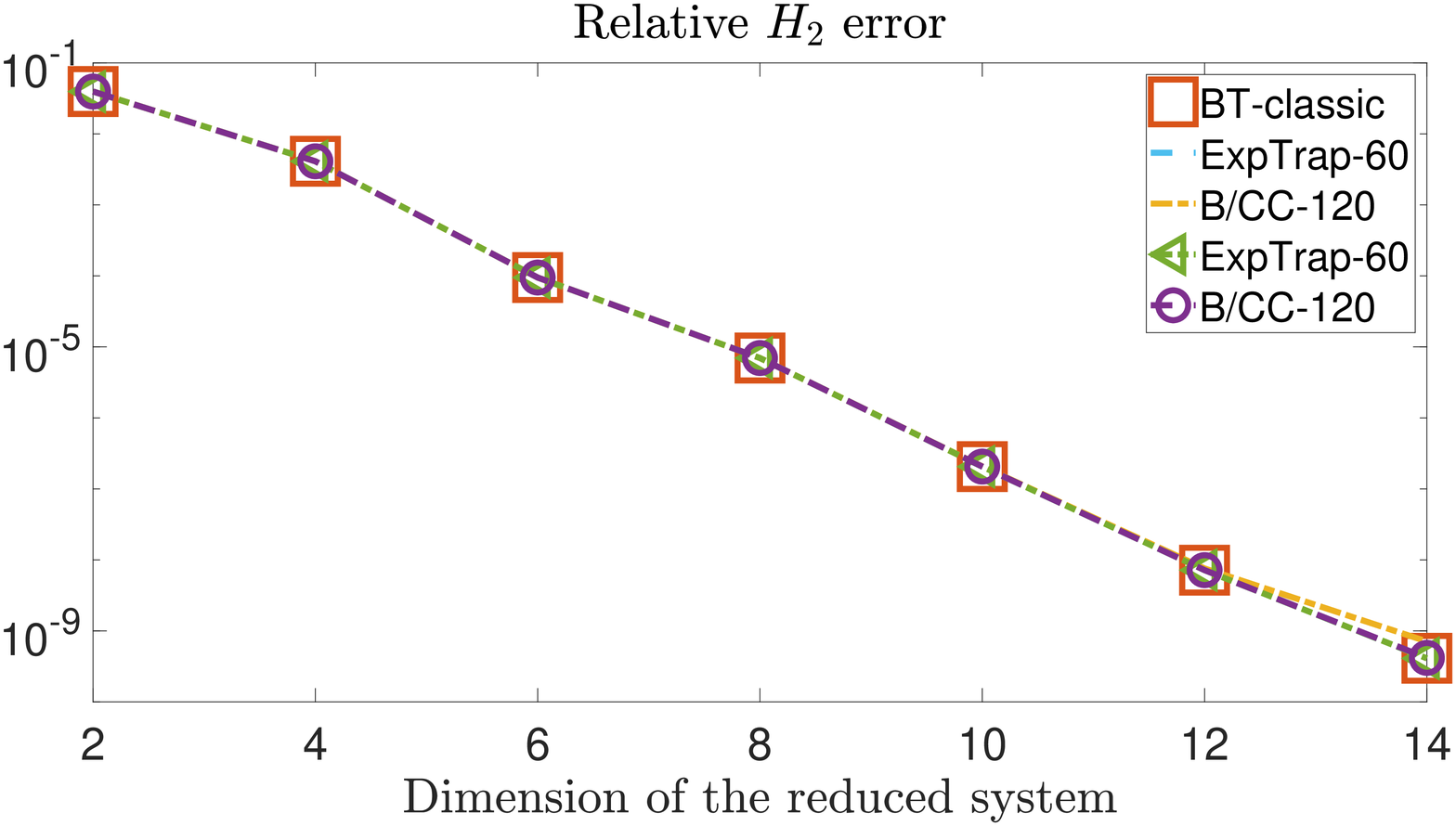}
	\vspace{-3mm}
	\caption{The relative $\cH_\infty$ (left) and $\cH_2$ (right) error approximation for different quadratures and nodes applied to the Niconet {\textrm [\textsf{heat}]} model.}
	\label{fig:2}
	\vspace{-2mm}
\end{figure}

As a second example, we use the International Space Station benchmark \cite{Niconet} (referred to as  \textsf{[iss1r]}) modeling the 1r component of the International Space Station --- the system dimension is $n = 270$; it has three inputs and three outputs.
To enforce a SISO system, we restrict our attention to the first input and first output. We choose both $200$ and $400$  logarithmically-spaced points in the interval $[10^{-1}, 10^2] \imunit$  as nodes for the [\textsf{ExpTrap}] scheme (the left and right nodes are different). In  [\textsf{B/CC}], we choose
$L=10$ and $L = 9$  for $\quadU$ and $\quadL$, respectively
and use both $200$ and $400$ nodes. We  then vary the reduction order in the interval $[2,24]$ (in increments of two) and collect both the $\cH_\infty$  and  the $\cH_2$ norms of the error systems corresponding to all three reduced models. The results are presented in Fig. \ref{fig:3}. 
{As in the [\textsf{heat}] case, these figures show that transfer-function based \QBT very closely mimics (in many cases exactly replicates) the $\cH_\infty$ and 
	$\cH_2$
	performance of the projection-based \BT. In this example, [\textsf{B/CC}] quadrature outperforms   [\textsf{ExpTrap}].
}

\begin{figure}[h]
	\hspace{-6mm}
	\includegraphics[scale=0.205]{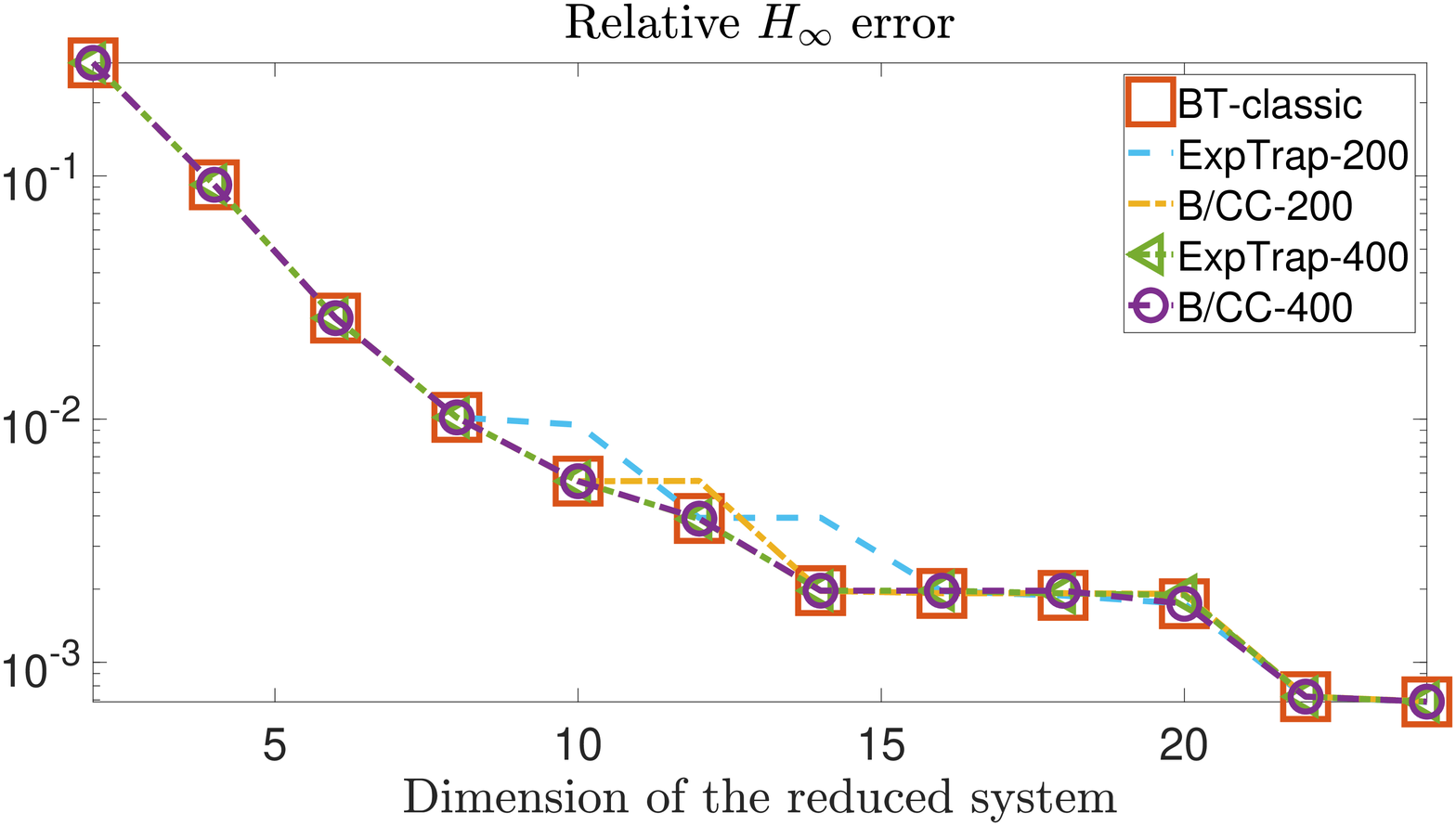}
	\hspace{-7mm}
	\includegraphics[scale=0.205]{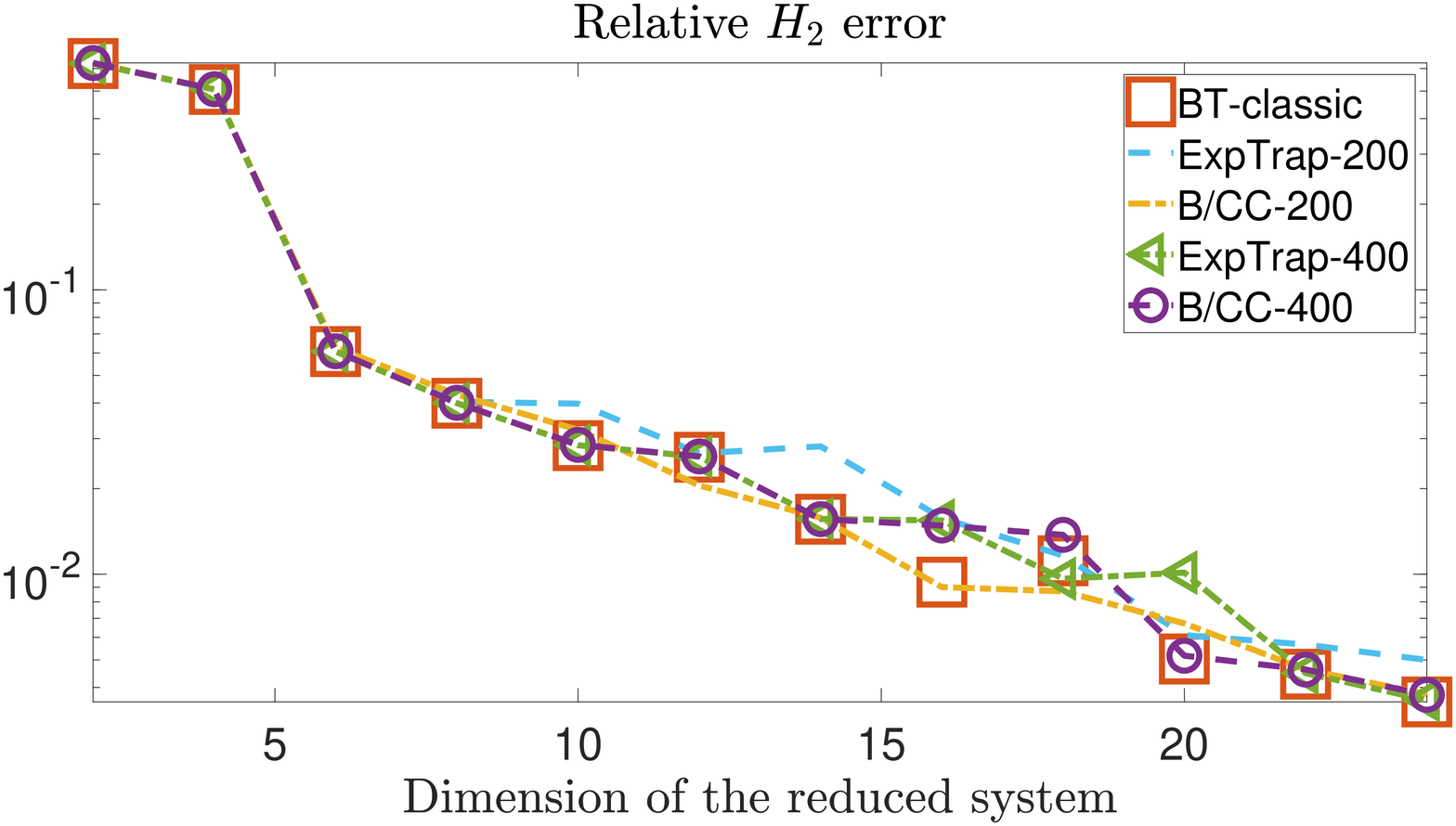}
	\vspace{-3mm}
	\caption{The relative $\cH_\infty$ (left) and $\cH_2$ (right) error approximation for different quadratures and nodes applied to the Niconet {\textrm [\textsf{iss1r}]} model.}
	\label{fig:3}
	\vspace{-4mm}
\end{figure}

For our last experiment, we purposefully reduce the accuracy of the underlying quadrature by using only a third as many nodes as in the previous experiment, while keeping the same nodal configuration as before (i.e., 20 and 40 nodes, respectively, that are logarithmically-spaced in $[10^{-3}, 10^3] \imunit$ for  [\textsf{ExpTrap}] and the same $L$-parameter choices  for [\textsf{B/CC}]). This leads to a slightly less accurate approximation of Hankel singular values, as can be observed in the left part of Fig. \ref{fig:2b}. Likewise, the corresponding reduced models are slightly less accurate than those computed in the first experiment, see the right part of Fig. \ref{fig:2b}. This situation is somewhat analogous to what is observed with approximate balanced truncation methods that approach the solution of the Lyapunov equations \eqref{lyapforP}-\eqref{lyapforQ} through explicit low rank approximation of the related square root factors, \eqref{lyapFact}.   
Such approaches, exemplified by low-rank ADI methods (see e.g., \cite{BenS13,DKS14,penzl1999cyclic,li2002low}, 
produce errors in the low-rank factors that jointly contribute to the error in $\mathbf{L}^T \mathbf{E} \mathbf{U}$ in the first step of BT, in a way that is similar to what we see when coarsening our quadrature rules.  Although the behavior of these projection-based low-rank ADI approaches is still not fully understood, there is overwhelming numerical evidence that projection-based approximate BT produced by such  methods works very well in practice. 
We anticipate that this favorable situation will be seen in our current data-driven BT formulation, as well. 

\begin{figure}[ht]
	\hspace{-6mm}
	\includegraphics[scale=0.205]{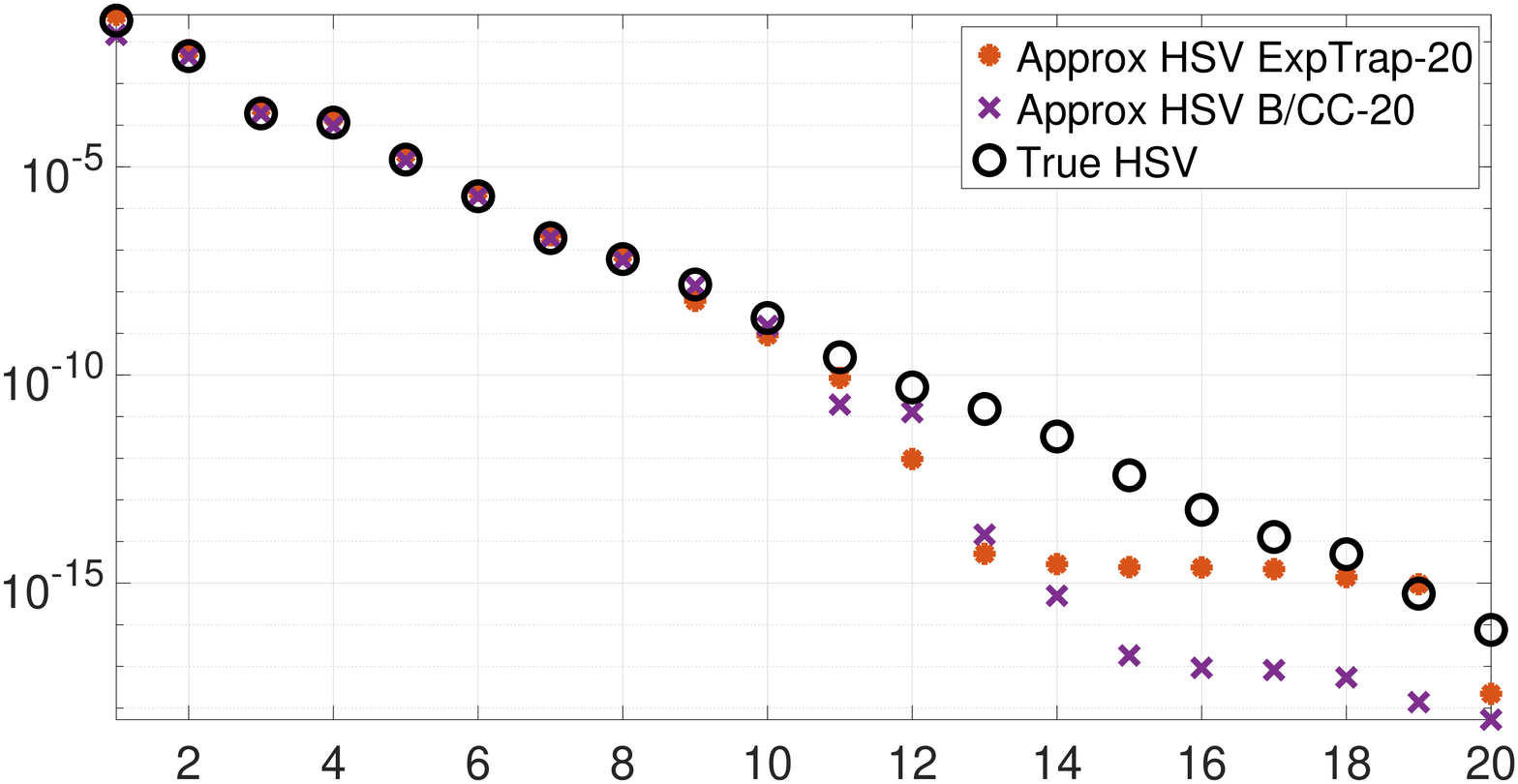}	
	\hspace{-6mm}
	\includegraphics[scale=0.205]{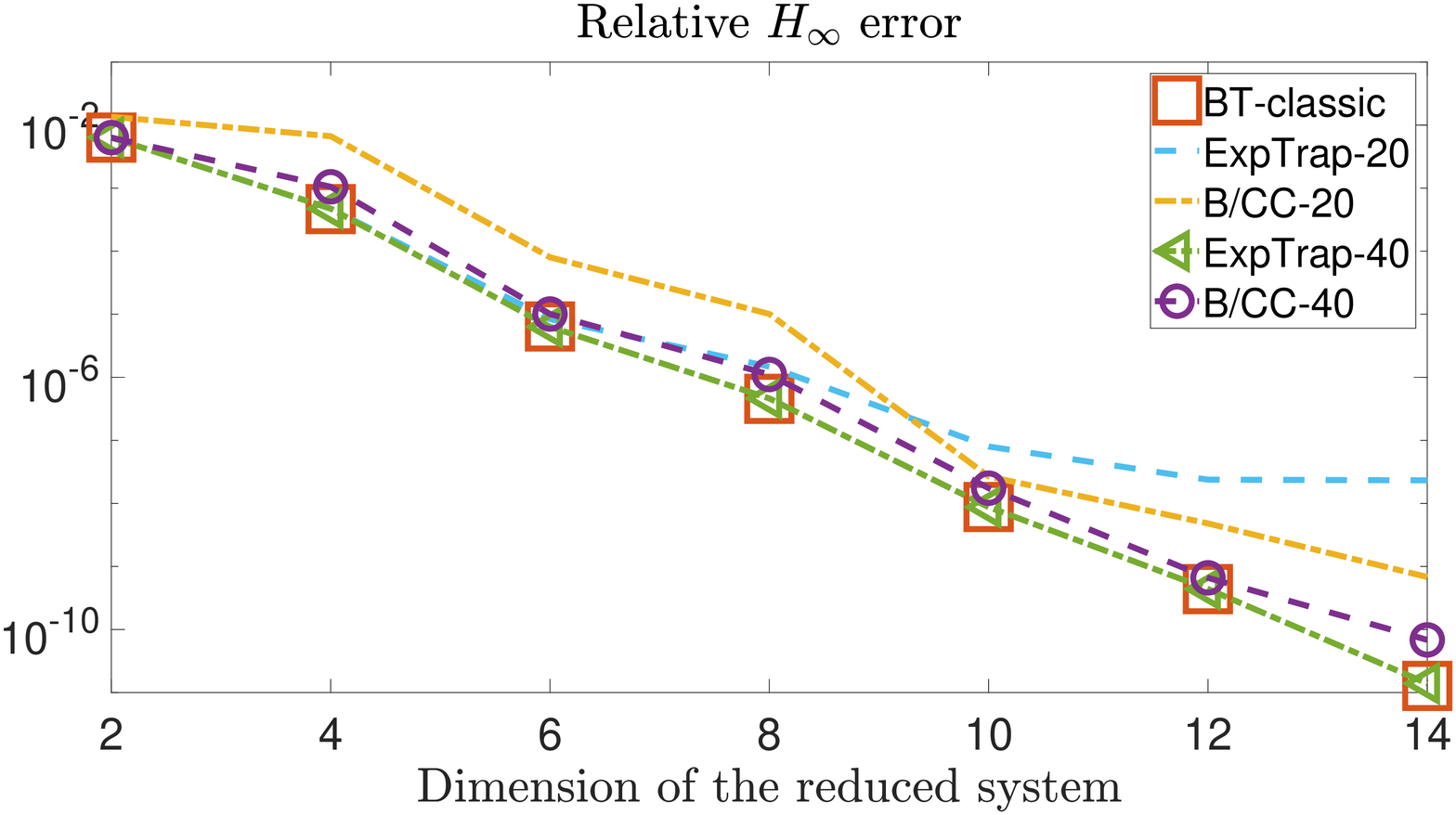}
	\vspace{-3mm}
	\caption{The computed (Hankel) singular values for the NICONET {\textrm\textsf{[heat]}} model using using 20 nodes with $L=4$ and $L = 3$ in {\textrm\textsf{[B/CC]}} for $\quadU$ and $\quadL$, respectively and 20 nodes for \textsf{[ExpTrap]} (left) and the relative $\cH_\infty$ (right) error approximation for different quadratures and nodes applied to the Niconet {\textrm [\textsf{heat}]} model.}
	\label{fig:2b}
	\vspace{-2mm}
\end{figure}

\subsection{Connections to approximate \BT methods}
\label{sec:othermethods}

Due to its compelling theoretical properties, an immense amount of literature has developed around effective methods for approximating \BT for large-scale dynamical systems, often focusing on efficient handling of the main computational bottleneck in \BT, namely, the computation of Gramians, which requires the solution of  two large-scale Lyapunov equations \eqref{lyapforP}-\eqref{lyapforQ}.  We list a small selection of these works here \cite{penzl1999cyclic,li2002low,Stykel08,BenS13,simoncini2016computational,kurschner2016efficient}; further references and detail can be found within these resources.

A common approach for approximate \BT
involves constructing  low-rank approximate factors 
to $\bP$ and $\bQ$,
i.e., $\bP \approx \lrU\lrU^T$ and 
$\bQ \approx \lrL\lrL^T$, with $\mathsf{rank}(\lrU)$ and $\mathsf{rank}(\lrL)$ much smaller than $n$.
Then we use $\lrU$ and $\lrL$ in Algorithm \ref{origbt} in place of the true square-root factors
$\bU$ and $\bL$.  
Unfortunately, regardless of how $\lrU$ and $\lrL$ are computed, 
this construction requires access to internal dynamics and state-space quantities.
The reduced model is then obtained by an explicit projection as in \eqref{btar}. 
This is fundamentally different from what we develop in this work; we do not compute $\lrU$ or $\lrL$ and we do not require access to state-space quantities. We only assume access to transfer function samples. Despite these major contrasts, there are striking similarities that motivate the underlying approximation theory.

The quadrature approximation \eqref{quad_P} to $\bP$  has been a major force behind the frequency-domain version of \textsf{Balanced POD}~\cite{WP02}. Indeed, the \textsf{Balanced POD} approach of \cite{WP02} engages a numerical quadrature for the Gramians, $\bP$ and $\bQ$, using unit quadrature weights;
$\rho_j = 1$ for $j=1,\ldots,\npf$ \eqref{quad_P}  and $\phi_k = 1$ for $j=1,\ldots,\nqf$ in \eqref{quad_Q}, and omitting the node at infinity in both cases.
The approach of \cite{WP02} was later referred to as Poor Man’s Truncated Balanced Reduction
in~\cite{phillips2004poor}; however, the implementation there is restricted to symmetric systems.
Significantly, \textsf{Balanced POD} is not data-driven in the sense that we explore here, but rather it is both \emph{projection-based} and \emph{intrusive},  requiring explicit access to a state-space representation. 
Nonetheless, it is clear that one may immediately arrive at a transfer-function based \emph{data-driven} formulation of  \textsf{Balanced POD} by using the \QBT framework developed here.

Along similar lines, Gauss-Kronrod quadrature was used to approximate Hankel singular values in ~\cite{benner2010balanced}, but as with earlier methods, explicit state-space representations 
are still required. Similar quadrature strategies were later used in~\cite{breiten2016structure}
to extend \BT to structure-preserving model reduction for integro-differential equations.  In all these methods, frequency-domain quadrature to approximate Gramians can be viewed as a common thread shared with our framework. Significantly, we avoid explicit state-space projections by directly working with  transfer function evaluations. 

Evidently, numerical quadrature can also be applied to the time-domain representation of $\bP$ as in~\eqref{gram_P} and this has also been considered as a strategy for approximate \BT, see, e.g.,  \cite{opmeer2011model,moore1981principal,BF19,BF20,singler2009proper,himpe2018emgr}. 
We explore these connections in \S \ref{sec:timeQBT} where a time-domain formulation of this data-driven \BT framework is considered.

ADI-type methods  (see, e.g.,\cite{smith1968matrix,penzl1999cyclic,li2002low,BKS14,DKS14,BenS13,simoncini2016computational}) are among the most  commonly used tools for producing  approximate low-rank Gramian factors $\lrU$ and $\lrL$. These methods are iterative in character and depend on  shift parameters that determine the speed of convergence. Even though the resulting low-rank factor $\lrU$ will have a different structure than $\quadU$ in \eqref{quad_U}, one may view either result as a low-rank factor for frequency-domain \textsf{Balanced POD}. Both factors span the same (Krylov) subspaces provided that the shift parameters are chosen to be the negative of the quadrature nodes \cite{BBF14}. Significantly, low-rank ADI solutions have also been  connected to time-domain quadrature methods \cite{BF19,BF20}, highlighting a  powerful approximation-theoretic core unifying the approaches.
Our approach is distinct from other methods in that it never constructs approximate solutions to $\bP$ or $\bQ$, nor attempts to approximate low-rank factors. Our \QBT reduced model is constructed directly from data.

\subsection{Comparison with interpolatory Loewner model reduction} 
\label{ClassicLoewner}

For the sampling points 
$\{\lp_k\}_{k=1}^\nq$ and $\{\rp_j\}_{j=1}^\np$, 
consider $\boldsymbol{\origLL} \in \IC^{\nq \times \np}$, $\boldsymbol{\origMM} \in \IC^{\nq \times \np}$, defined element-wise as: 
\begin{align} \label{origLoewmat}
\origLL_{k,j}& =  -  \frac{H( \lp_k) - H( \rp_j)}{ \lp_k -  \rp_j} \quad \mbox{and} \quad \origMM_{k,j} = -  \frac{ \lp_k H( \lp_k) -  \rp_j H( \rp_j)}{ \lp_k - \rp_j}, \\
& ~~\mbox{for}~~k=1,2,\ldots,\nq~~\mbox{and}~~j=1,2,\ldots,\np. \nonumber
\end{align}
The matrices $\origLL$  and $\origMM$ in \eqref{origLoewmat} are called, respectively, the Loewner and shifted-Loewner matrices.  Notice that $\origLL$  and $\origMM$ bear evident similarities to the key matrices
$\quadLL$ in \eqref{quadLoew}  and $\quadMM$ in \eqref{quadshiftLoew}   that are central to \QBT.
Indeed, assuming that the node at infinity is absent from the quadrature rules in \eqref{quad_P} and \eqref{quad_Q}, and that the quadrature nodes in \QBT determine {$\mu_k=\imunit\omega_k$ for $k=1,2,\ldots,\nq$ and $\lambda_j=\imunit\zeta_j$} 
for $j=1,2,\ldots,\np$ in \eqref{origLoewmat},  the matrices  $\quadLL$ in \eqref{quadLoew} and $\quadMM$ in \eqref{quadshiftLoew} used in \QBT will be diagonal scalings of $\origLL$ and 
$\origMM$:
{\small
	$$
	\quadLL = \mathsf{diag}(\phi_1,\ldots,\phi_\nq) \,\origLL \,
	\mathsf{diag}(\rho_1,\ldots,\rho_\np)
	\mbox{ and } 
	\quadMM = \mathsf{diag}(\phi_1,\ldots,\phi_\nq) \,\origMM \,
	\mathsf{diag}(\rho_1,\ldots,\rho_\np).
	$$
}
The matrices $\origLL$  and $\origMM$ of \eqref{origLoewmat} were originally conceived to play a role in quite a different model reduction scheme which we refer to as the \emph{interpolatory Loewner model reduction framework}. 
This model reduction framework was introduced in~\cite{mayo2007fsg} as a data-driven interpolation-based system identification and complexity reduction technique. It has become a popular and powerful tool for model reduction over the last decade, and has been successfully extended to parametrized linear dynamical systems \cite{morIonA14}, structured linearized systems~\cite{schulze2018data}, bilinear~\cite{morAntGI16} and quadratic-bilinear systems~\cite{morGosA18}, and has been formulated to work with time-domain data as well~\cite{morPehGW17}.

The principal feature of the \emph{interpolatory Loewner framework} is the construction of a rational interpolant matching the given sampled transfer function data.  This provides immediately a realization of a reduced model that matches the system response at  specified driving frequencies (or equivalently, at specified complex interpolation points).  The quality of the final reduced model is strongly tied to the choice of interpolation points and in the MIMO setting, also to the choice of tangent directions. If some additional flexibility exists in where the system may be sampled, $\mathcal{H}_2$-optimal reduced order interpolants may be feasibly computed \cite{BeattieGugercin2012RlznIndH2Approx}. For a detailed account of theoretical developments as well as implementation details  for Loewner-based methods relating especially to rational interpolation, we refer to \cite{AntBG20,ALI17}, and references therein.

Although our \QBT framework makes use of matrices that are, at least in many cases of interest, diagonally scaled Loewner and shifted Loewner matrices, rational interpolation plays no role for us.   Instead, we have observed that there are strong links among (a) quadrature rules \eqref{quad_P} and \eqref{quad_Q},  (b) the (limiting) implicit Gramian factorizations described in \eqref{exactGramFact}, and (c) the related Loewner / shifted Loewner factorizations.  These links couple together the features developed in Propositions \ref{prop:quadL}, \ref{prop:QuadError},  and \ref{prop:quadM}, and allow us ultimately to mimic \BT in a purely data-driven manner.

\subsubsection{Numerical examples} 

We use the [\textsf{heat}] and [\textsf{iss1r}] models
from \S\S\ref{sec:exmphankel} anda \ref{sec:firstcomp}
as test cases. \QBT is implemented using only the [\textsf{ExpTrap}] quadrature.

We choose 120 logarithmically-spaced points in the interval $[10^{-3}, 10^3] \imunit$ for the [\textsf{heat}] model and 400 logarithmically-spaced points in the interval $[10^{-1}, 10^2] \imunit$ for the [\textsf{iss1r}] model. 
{In the interpolatory Loewner framework, the singular values of 
	$\origLL$ and/or the augmented matrix $[~\origLL~~\origMM~]$ play a fundamental role in determining reduction order,  analogous to Hankel singular values in \BT; see 
	\cite[\S 4.3]{AntBG20}. Therefore, in addition to the \QBT quantities,  we construct $\origLL$ and $\origMM$ corresponding to the same transfer function samples and compute (i) the true 
	Hankel singular values, (ii)  \QBT-based Hankel singular values, and 
	(iii) the singular values of $\origLL$ and $[~\origLL~~\origMM~]$.
	Results are depicted in Fig. \ref{fig:4}. While the Hankel singular values computed via [\textsf{ExpTrap}]
	approximate the true ones fairly well, the singular values corresponding to the Loewner method appear to follow the trend, only up to a scaling factor. This simple example shows the importance of 
	the quadrature weights appearing in the \QBT framework.}
\begin{figure}[ht]
	\hspace{-6mm}
	\includegraphics[scale=0.205]{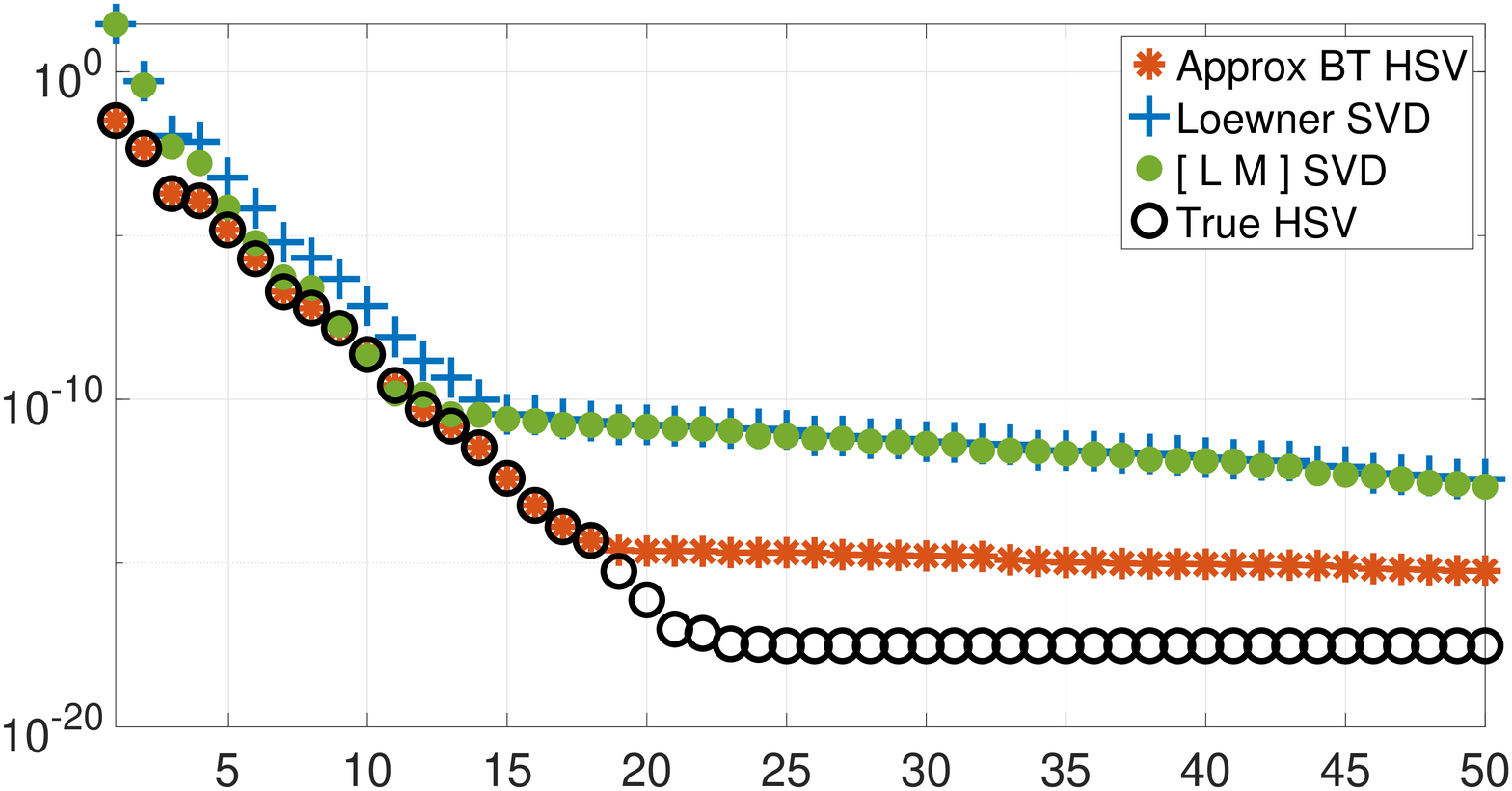}
	\hspace{-7mm}
	\includegraphics[scale=0.205]{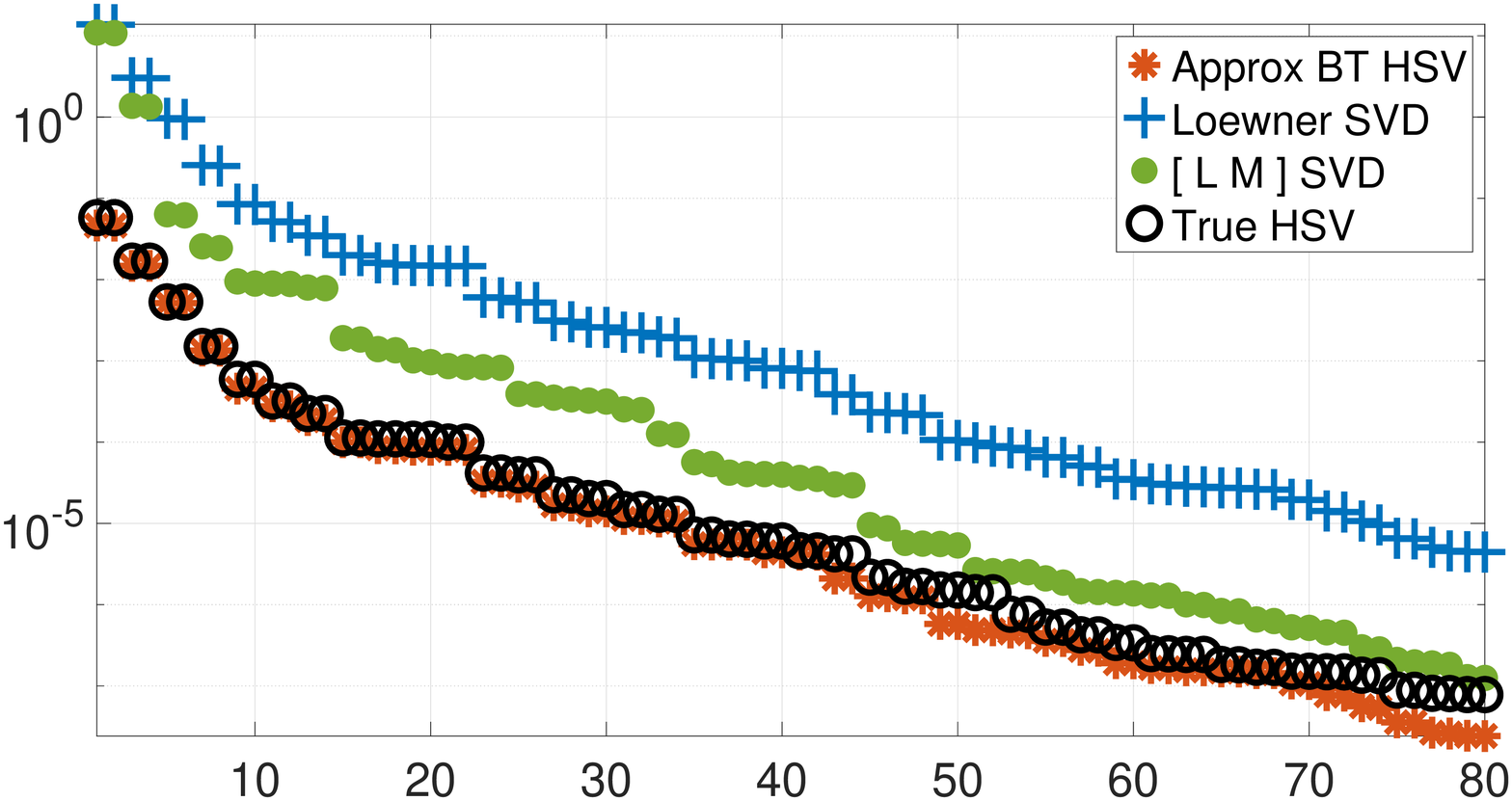}
	\vspace{-5mm}
	\caption{The computed singular values for the heat model (left) and the {\textrm [\textsf{iss1r}]} model (right).}
	\label{fig:4}
	\vspace{-2mm}
\end{figure}

Next, we consider the SISO version of the [\textsf{iss1r}] model
and apply both \QBT with [\textsf{ExpTrap}] using 200 logarithmically-spaced points in the interval $[10^{-1}, 10^2] \imunit$ and  the Loewner-based interpolation method. We  vary the reduction order from $r=2$ to $r=24$, and collect the $\cH_\infty$  and  the $\cH_2$ norms of the error systems for both methods. The results are presented in Fig. \ref{fig:5} where the label \textsf{[Loewner-200]} refers to the Loewner method with $N = 200$ points.  Fig. \ref{fig:5} shows that 
\emph{for the same transfer function data},  it pays off to use appropriate quadrature weights  to perform \QBT as 
\QBT outperforms the Loewner approach and mimics \BT much more closely.

For the two quadrature rules that we consider here (described in Appendix A), the quadrature nodes are closely related to spectral/pseudospectral approximations of the integrand and so will also be effective for producing accurate interpolants for interpolation-based methods including those that typify the Loewner model reduction framework. Other choices for quadrature rules (e.g., Gauss  rules) might not yield such effective interpolants. 
\begin{figure}[h]
	\hspace{-6mm}
	\includegraphics[scale=0.205]{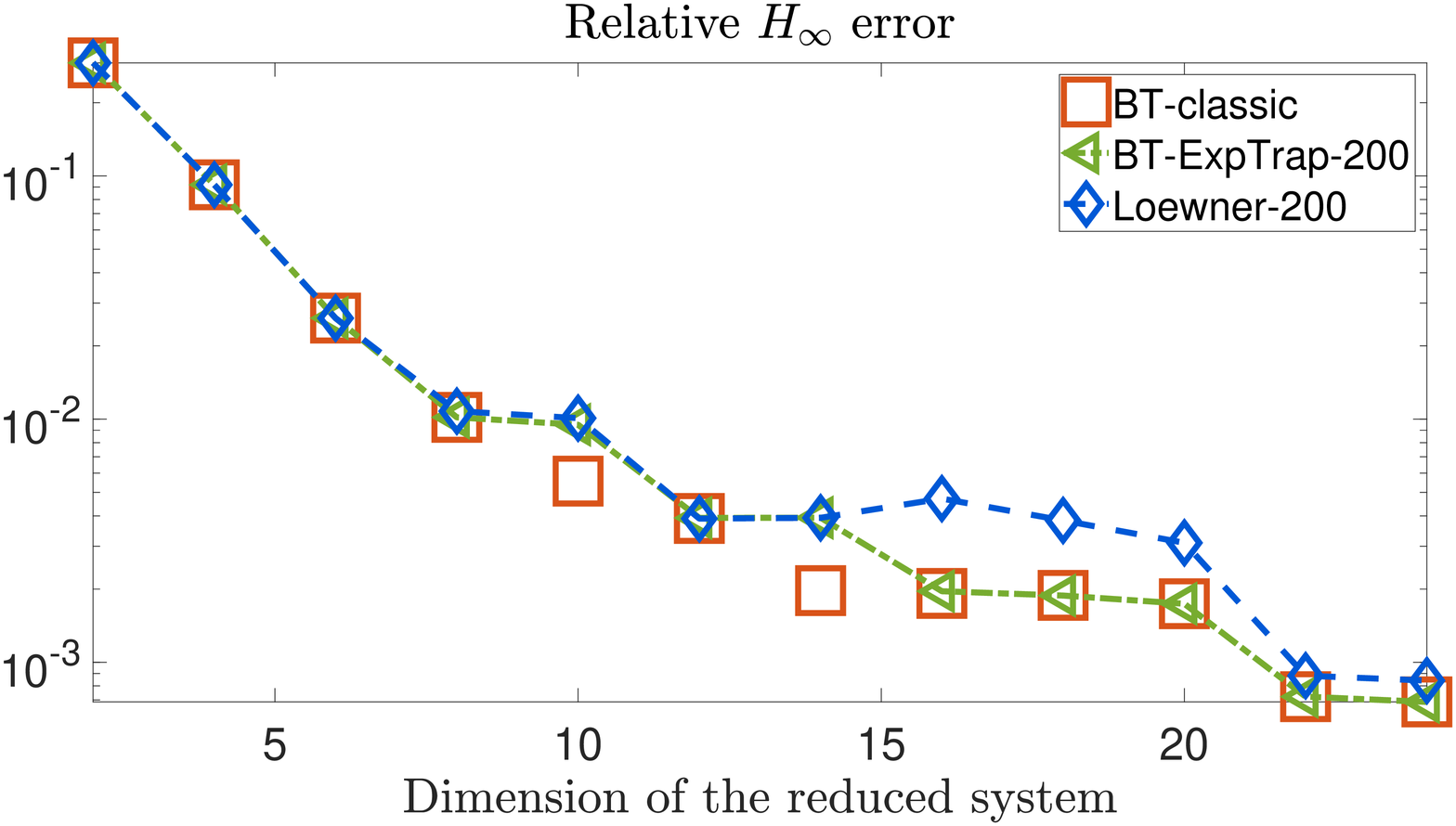}
	\hspace{-7mm}
	\includegraphics[scale=0.205]{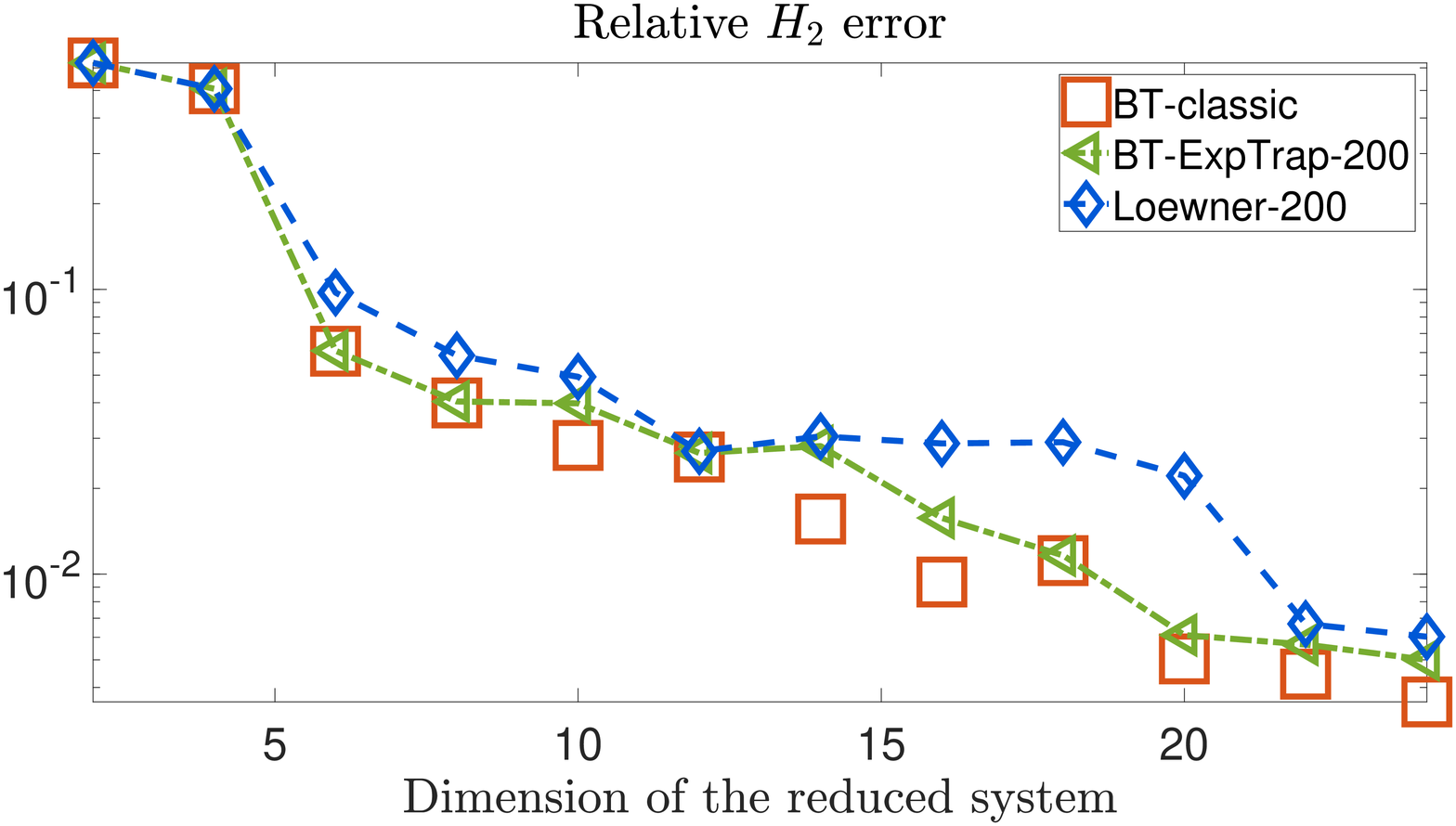}
	\vspace{-2mm}
	\caption{Relative $\cH_\infty$ (left) and  $\cH_2$ (right) approximation errors for different methods applied to the {\textrm [\textsf{iss1r}]} model.}
	\label{fig:5}
	\vspace{-3mm}
\end{figure}

\section{Refinements and Extensions}
\label{sec:ref_and_ext}

\subsection{Keeping it real} \label{sec:realness}

The original system has been presumed to be real, insofar as real-valued inputs and real-valued initial states assure real-valued outputs.   This guarantees in turn that a \emph{real}  realization involving matrices $\bE,\,\bA \in \mathbb{R}^{n \times n}, \ \bB\in \mathbb{R}^{n \times m}, \bC\in \mathbb{R}^{p \times n}$ is possible, though not necessarily explicitly available. 
Equivalently, the associated system transfer function $\mathsf{H}(s)=\bC(s\bE-\bA)^{-1}\bB$ displays a conjugate symmetry of system poles and residues (i.e., complex poles and residues must occur in conjugate pairs).  If we utilize \emph{symmetric quadrature rules} in the approximations given in \eqref{quad_P} and \eqref{quad_Q} --- that is, if the sets of quadrature nodes and associated weights, $\{(\zeta_j,\,\rho_j)\}_{j=1}^{\np}$ and $\{(\omega_k,\,\phi_k\}_{k=1}^{\nq}$,
are symmetrically distributed on the real axis with respect to $0$ ---
then we will be able to retain this conjugate symmetry, 
accomplish  computations equivalent to Steps 2, 3, and 4 of \QBT using only real arithmetic, and be assured that the final reduced model will inherit the conjugate symmetry of the original system, thus representing a reduced-order real dynamical system. 

For simplicity, we focus on the SISO case and suppose that both node sets have an even number of points that are symmetrically distributed across the real axis, say $\np=2\nu_p$ and $\nq=2\nu_q$.
(More general settings are considered in \cite{AntBG20}, p. 203). Relabel these node sets as 
$$
\begin{array}{c}
\zeta_{-\nu_p}< \zeta_{-\nu_p+1}< \cdots < \zeta_{-1} < 0 < \zeta_{1}< \cdots < \zeta_{\nu_p-1}<\zeta_{\nu_p}\\[2mm]
\omega_{-\nu_q}< \omega_{-\nu_q+1}< \cdots < \omega_{-1} < 0 < \omega_{1}< \cdots < \omega_{\nu_q-1}<\omega_{\nu_q},
\end{array}
$$
noting that $\zeta_{-j}=-\zeta_{j}$ and with corresponding weights, $\rho_{-j}=\rho_j$, for $j=1,...,\nu_p$.  Similarly, we have $\omega_{-k}=-\omega_{k}$ and $\phi_{-k}=\phi_{k}$
for $k=1,...,\nu_q$. The nodes/weights are then reordered with respect to this $\pm$ node pairing: 
{
	\begin{equation}\label{RealReord}
	\{\,\zeta_{1},\, \zeta_{-1},\,\zeta_{2},\, \zeta_{-2}, \ldots,\,\zeta_{\nu_p},\,\zeta_{-\nu_p} \} 
	\quad \mbox{and} \quad 
	\{\,\omega_{1},\, \omega_{-1},\,\omega_{2},\, \omega_{-2}, \ldots,\,\omega_{\nu_q},\,\omega_{-\nu_q} \}.
	\end{equation} }
When the samples obtained in Step 1 of \QBT are reordered in the same way, they appear now as a sequence of conjugate pairs: 
$$
\{H(\imunit \zeta_j), \overline{H(\imunit \zeta_j)} \}_{j=1}^{\nu_p}\quad\mbox{ and }\quad \{H(\imunit \omega_k), \overline{H(\imunit \omega_k)} \}_{k=1}^{\nu_q}.
$$ Notice that one need only explicitly evaluate the first function of each pair.

Now instead of \eqref{quadLoew},
we reconsider 
the construction of $\quadLL$
taking into account the new node ordering given in \eqref{RealReord}.  
With respect to this node ordering, $\quadLL$ can be partitioned into $2\times 2$ blocks, $\quadLL_{k,j}^{(2)}$, conforming to the conjugate pairing of quadrature nodes in \eqref{RealReord} --- left data at $(\imunit \omega_k, -\imunit \omega_k)$; 
right data at 
$(\imunit \zeta_j, -\imunit \zeta_j)$:
$$
\quadLL_{k,j}^{(2)}=
- \phi_k \rho_j\left[ 
\begin{array}{cc}
\frac{H(\imunit\,\omega_k) - H(\imunit\,\zeta_j)}{\imunit\,\omega_k - \imunit\, \zeta_j} & \frac{H(\imunit\,\omega_k) - \overline{H(\imunit\,\zeta_j)}}{\imunit\,\omega_k -(-\imunit\, \zeta_j)} \\[2mm]
\frac{\overline{H(\imunit\,\omega_k)} - H(\imunit\,\zeta_j)}{(-\imunit\,\omega_k) - \imunit\, \zeta_j} & \frac{\overline{H(\imunit\,\omega_k)} - \overline{H(\imunit\,\zeta_j)}}{(-\imunit\,\omega_k) -(-\imunit\, \zeta_j)} \\[2mm]
\end{array}\right],
$$
for $j=1,...,\nu_p$ and $k=1,...,\nu_q$. Note that each $\quadLL_{k,j}^{(2)}$ satisfies a Sylvester equation: 
\small
\begin{align*}
&\quadLL_{k,j}^{(2)}\,\left[\begin{array}{cc}
\imunit\,\omega_k & 0 \\[2mm] 0 & -\imunit\,\omega_k
\end{array}\right] - 
\left[\begin{array}{cc}
\imunit\,\zeta_j & 0 \\[2mm] 0 & -\imunit\,\zeta_j
\end{array}\right]\,\quadLL_{k,j}^{(2)} \\ &= \phi_k \rho_j\left(\left[\begin{array}{c}
1 \\[1mm] 1
\end{array}\right]\, [\,H(\imunit\,\zeta_j)\quad
\overline{H(\imunit\,\zeta_j)}\,]-
\left[\begin{array}{c}
H(\imunit\,\omega_k)\\[2mm]
\overline{H(\imunit\,\omega_k)}
\end{array}\right]\, [\,1\quad 1\,]\right).
\end{align*}
\normalsize
This uniquely determines each $\quadLL_{k,j}^{(2)}$ (and hence, $\quadLL$) when the sets of quadrature nodes, 
$\{\zeta_j\}$ and $\{\omega_k\}$, are disjoint as we have assumed. 

$\quadLL$ is unitarily equivalent to a \emph{real} matrix as we next demonstrate explicitly. 
Define $\bJ={\scriptsize \frac{1}{\sqrt{2}}\left[\begin{array}{cc} 1 & -\imunit \\ 1 & \phantom{-} \imunit\end{array}\right]}$
and consider a unitary equivalence defined by
$$
\quadLL^R = (\bI_{\nu_q}\otimes \bJ^{\star})\ \quadLL\ (\bI_{\nu_p}\otimes \bJ).
$$
$\quadLL^R$ can be partitioned into $2\times 2$ blocks, conforming precisely to the earlier partitioning of $\quadLL$, and indeed one finds 
for each block, $\quadLL_{k,j}^{(2)R}=\bJ^{\star}\quadLL_{k,j}^{(2)}\bJ$
for $j=1,...,\nu_p$ and $k=1,...,\nu_q$.
One may check directly that $\quadLL_{k,j}^{(2)R}$ is real.  Indeed, each block may be determined uniquely from a corresponding (real) Sylvester equation:
\footnotesize
\begin{align*}
\quadLL_{k,j}^{(2)R}\,\left[\begin{array}{cc}
0 & \omega_k \\[2mm] -\omega_k & 0
\end{array}\right] - 
\left[\begin{array}{cc}
0 & \zeta_j \\[2mm] -\zeta_j & 0
\end{array}\right]\,\quadLL_{k,j}^{(2)R} 
= 2\phi_k \rho_j
\left[\begin{array}{cc}
\mathsf{Re}\left(H(\imunit\,\zeta_j) - H(\imunit\,\omega_k)\right) \ 
&\  \mathsf{Im}\left(H(\imunit\,\zeta_j)\right) \\[2mm]
\mathsf{Im}\left(H(\imunit\,\omega_k)\right) & 0
\end{array}\right],
\end{align*}
\normalsize
for $j=1,...,\nu_p$ and $k=1,...,\nu_q$. 
One may solve these Sylvester equations for $\quadLL_{k,j}^{(2)R}$, in real arithmetic, independently of one another in parallel. 

Similar considerations may be applied to \eqref{quadshiftLoew}, \eqref{quadh}, and \eqref{quadg},  which define respectively, the complementary matrix $\quadMM$, and vectors $\quadLb$ and $\quadcU$.  For $\quadMM$, we find as before, a unitarily equivalent real matrix:
$$
\quadMM^R = (\bI_{\nu_q}\otimes \bJ^{\star})\ \quadMM\ (\bI_{\nu_p}\otimes \bJ),
$$
which may be partitioned into submatrices,
$\quadMM_{k,j}^{(2)R}=\bJ^{\star}\quadMM_{k,j}^{(2)}\bJ$
for $j=1,...,\nu_p$ and $k=1,...,\nu_q$, conforming to the partitioning for $\quadLL^R$. These submatrices are directly computable from real Sylvester equations:
\begin{align*}
&\quadMM_{k,j}^{(2)R}\,\left[\begin{array}{cc}
0 & \omega_k \\[2mm] -\omega_k & 0
\end{array}\right] - 
\left[\begin{array}{cc}
0 & \zeta_j \\[2mm] -\zeta_j & 0
\end{array}\right]\,\quadMM_{k,j}^{(2)R} \\ &= 2\phi_k \rho_j
\left[\begin{array}{cc}
\mathsf{Im}\left(\omega_k\,H(\imunit\,\omega_k)-\zeta_j\,H(\imunit\,\zeta_j)\right) \ 
&\  \mathsf{Re}\left(\zeta_j\,H(\imunit\,\zeta_j)\right) \\[2mm]
\mathsf{Re}\left(\omega_k\,H(\imunit\,\omega_k)\right) & 0
\end{array}\right],
\end{align*}
for $j=1,...,\nu_p$, \ $k=1,...,\nu_q$.
For $\quadLb$ and $\quadcU$, we may define corresponding real vectors, 
$$
\quadLb^{(R)} = (\bI_{\nu_q}\otimes \bJ^{\star})\quadLb \quad  \mbox{and} \quad \quadcU^{(R)T}= \quadcU^T (\bI_{\nu_p}\otimes \bJ).
$$
If we return to \QBT (Algorithm \ref{quadbt}) and replace $\quadLL$, $\quadMM$, $\quadLb$, and $\quadcU$ with their real counterparts derived above, $\quadLL^{(R)}$, $\quadMM^{(R)}$, $\quadLb^{(R)}$, and $\quadcU^{(R)}$, then the computations in Steps (3) and (4) may be done entirely in real arithmetic, and the final reduced model defined in Step (4) will be real, even in the presence of rounding errors.  

\subsection{MIMO Systems}\label{MIMOsys}
We have thus far presented an analysis framework specific to SISO dynamical systems, however the MIMO case follows from this nearly immediately; the only change occurs in the construction of 
$\quadLL$, $\quadMM$, $\quadLb$ and $\quadcr$, and not in  
Algorithm \ref{quadbt} itself.   Specifically, assume that the underlying dynamical system has $m$ inputs and $p$ outputs (e.g., as in \eqref{OrigSys}). In this case, the transfer function function is a matrix-valued rational function and we will denote it by $\bH(s)$ so that for any frequency $\hat{s} \in \mathbb{C}$, we have $\bH(\hat{s}) \in \mathbb{C}^{p \times m}$. Therefore, assuming the same set-up as in Proposition 
\ref{prop:quadL},
in defining $\quadLL$, we have
\begin{equation} \label{quadLoewmimo}
\quadLL_{k,j} = - \phi_k \rho_j \frac{\bH(\imunit\omega_k) - \bH(\imunit\zeta_j)}{\imunit\omega_k - \imunit \zeta_j} \in \mathbb{C}^{p\times m},
\end{equation}
as the $(k,j)$th \emph{block} of $\quadLL$ and therefore $\quadLL \in \mathbb{C}^{(p \cdot \nq)\times (m \cdot \np)}$. Similarly, in Proposition 
\ref{prop:quadM}, we will have
\begin{equation} \label{quadLoewmimo2}
\quadMM_{k,j} = - \phi_k \rho_j \frac{\imunit \omega_k\bH(\imunit\omega_k) - \imunit \zeta_j\bH(i\zeta_j)}{\imunit\omega_k - \imunit \zeta_j}\in \mathbb{C}^{p\times m},
\end{equation}
as the $(k,j)$th \emph{block} of $\quadMM$ and thus $\quadMM \in \mathbb{C}^{(p \cdot \nq)\times (m \cdot \np)}$. Since we are dealing with MIMO systems, we replace $\quadLb$ with $\quadLB=\quadL^* \bB
\in \mathbb{C}^{(\nq \cdot p) \times m}$ and $\quadcU^T$ with 
$\quadCU = \bC \quadU \in \mathbb{C}^{p \times (\np \cdot m)}$ with the $k$th row block of $\quadLB$ and $j$th columns block of $\quadCU$  given by 
$$
\quadLB_k = \phi_k \bH(\imunit \omega_k) \in \mathbb{C}^{p \times m} \quad \mbox{and} \quad 
\quadCU_j = \rho_j \bH(\imunit \zeta_j) \in \mathbb{C}^{p \times m}.
$$
Then, Algorithm \ref{quadbt} simply proceeds with these new definitions. 

\subsubsection{Numerical example: the MIMO case}

We examine performance for the [\textsf{iss1r}] numerical test case, which has $p=3$ outputs and $m=3$ inputs. Modifications specified above in \S\ref{MIMOsys} are followed to adapt \QBT to a MIMO setting.

Choose both 200 and 400 logarithmically-spaced points in the interval $[10^{-1}, 10^2] \imunit$  as nodes for [\textsf{ExpTrap}] (the left and right nodes are chosen to be distinct). For  [\textsf{B/CC}], choose 
$L=10$ and $L = 10.5$ for $\quadU$ and $\quadL$, respectively,
using both 200 and 400 nodes. We then vary the reduction order from $r=2$ to $r=24$, and compute the relative $\cH_\infty$ and $\cH_2$ errors, corresponding to both quadrature schemes. The results are presented in Fig. \ref{fig:6}. 
{As in the SISO case, \QBT accurately mimics \BT performance.
	In this MIMO example,  [\textsf{B/CC}] outperforms [\textsf{ExpTrap}].}

\begin{figure}[ht]
	\hspace{-6mm}
	\includegraphics[scale=0.205]{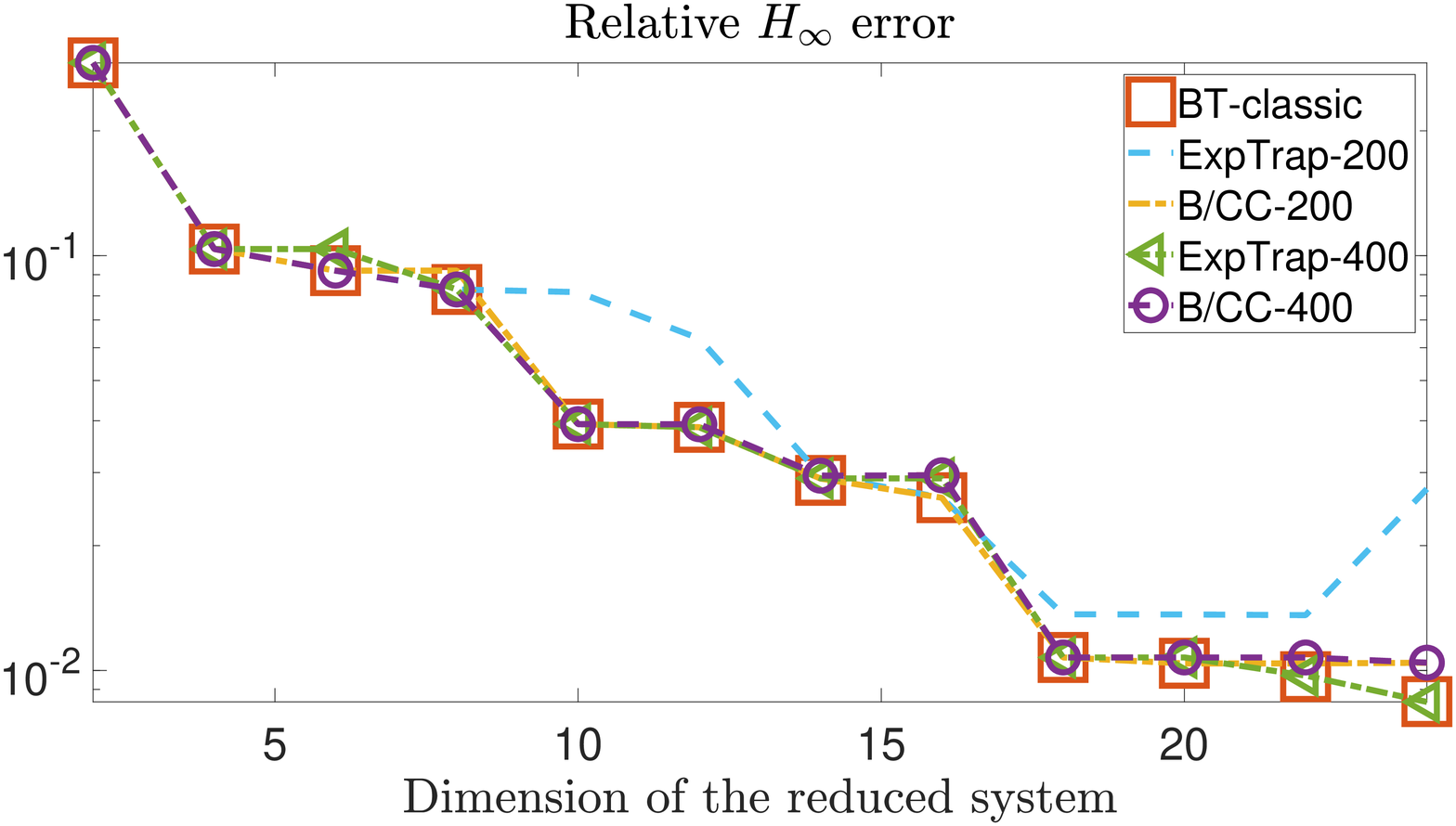}
	\hspace{-7mm}
	\includegraphics[scale=0.205]{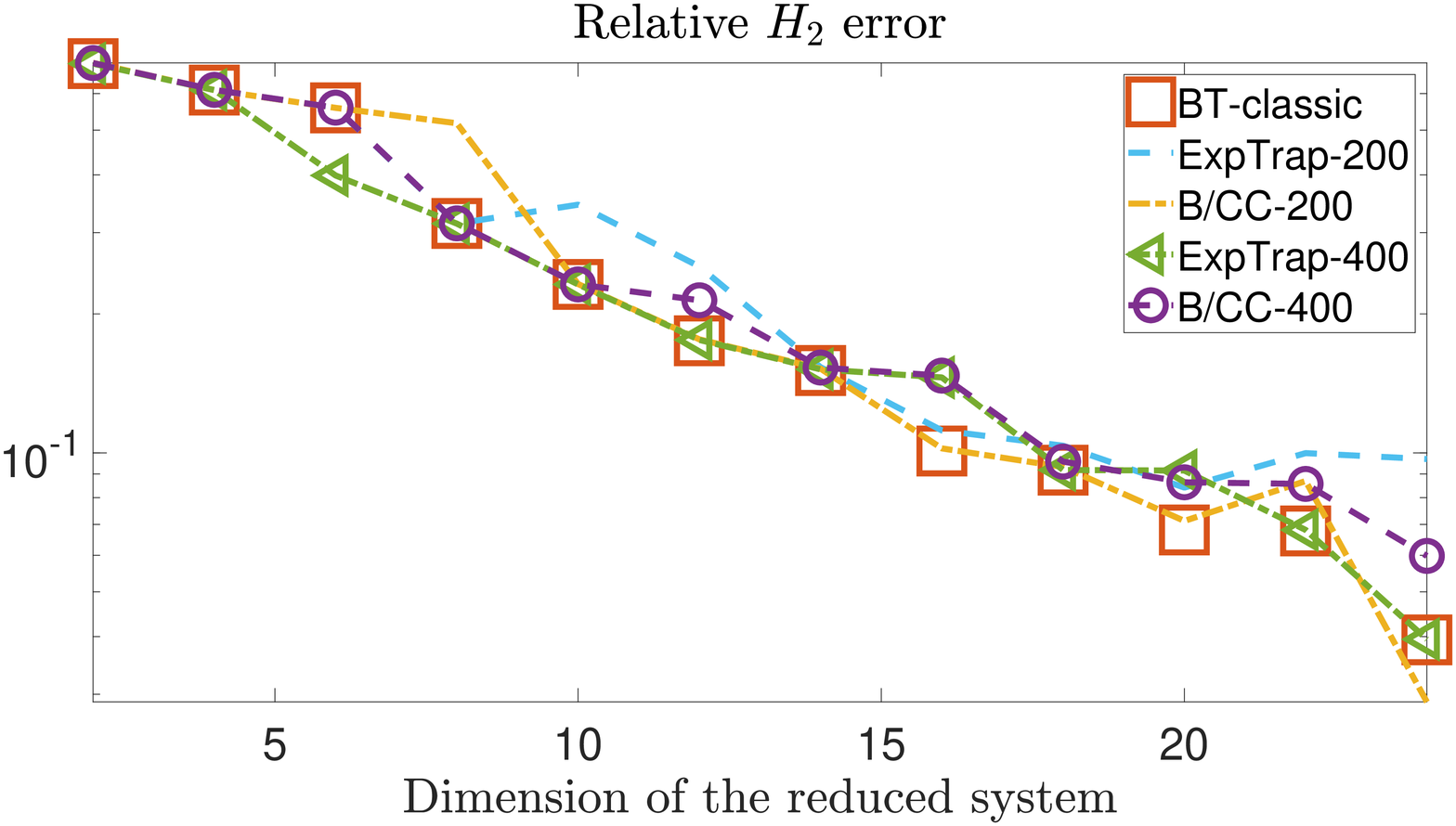}
	\vspace{-3mm}
	\caption{Relative $\cH_\infty$ (left) and $\cH_2$ (right)  approximation errors for different quadrature schemes ({\textrm[\textsf{ExpTrap}]} vs {\textrm[\textsf{B/CC}]}) and different numbers of nodes (200 vs 400) for {\textrm [\textsf{iss1r}]} MIMO model.}
	\label{fig:6}
	\vspace{-3mm}
\end{figure}

\subsection{Discrete-time systems}\label{DISCsys}

Consider a discrete-time dynamical system characterized by the  difference equations
\begin{equation} \label{OrigSys_disc}
\begin{array}{c}\bE \bx_{k+1} = \bA\, \bx_{k} + \bB\, \bu_k, \\[1mm] \by_k = \bC \bx_k,\end{array}
\end{equation}
where the input mapping is given by $\bu : \mathbb{Z} \rightarrow \mathbb{R}^m$, the state trajectory/variable is
$\bx: \mathbb{Z} \rightarrow \mathbb{R}^n$, and the output mapping is $y : \mathbb{Z} \rightarrow \mathbb{R}^p$. The system matrices, as in the continuous case, are given by $\bA, \bE \in \mathbb{R}^{n \times n}$, and  $\bB\in \mathbb{R}^{n\times m}$, $\bC \in \mathbb{R}^{p\times n}$. We assume that the discrete system is \emph{asymptotically stable}, i.e., the eigenvalues of the matrix pencil $(\bA,\bE)$  are located in the open unit disc.

The reachability Gramian $\bP$ in the frequency domain is given by
\begin{align} \label{gram_P_freq_disc}
\begin{split}
\bP =
\frac{1}{2\pi}\int_{0}^{2 \pi} (e^{\imunit \zeta}\bE -\bA)^{-1} \bB \bB^T (e^{-\imunit \zeta}\bE^T-\bA^T)^{-1} d \zeta.
\end{split}
\end{align}
Similarly, the observability Gramian can be represented as
\begin{align} \label{gram_Q_disc}
\begin{split}
\bQ = \frac{1}{2\pi}\int_{0}^{2 \pi} (e^{-\imunit \omega} \bE^T -\bA^T)^{-1} \bC^T \bC (e^{\imunit \omega}\bE -\bA)^{-1} d \omega
\end{split}
\end{align}
$\bP$ and $\bQ$ satisfy the following Stein equations:
\begin{align}  \label{lyapforPQ_disc}
\bA \bP\bA^T + \bB\bB^T = \bE \bP \bE^T, \ \
\bA^T \bQ \bA   + \bC^T \bC = \bE^T \bQ \bE.
\end{align}
As in the continuous-time case, we employ a numerical quadrature rule to approximate the  the Gramian $\bP$ defined in \eqref{gram_P_freq_disc} to obtain the approximant
\begin{align} \label{quad_P_disc}
\begin{split}
\bP \approx \quadP = \sum_{j=1}^{\np} \rho_j^2  (e^{\imunit \omega_j} \bE -\bA)^{-1} \bB \bB^T (e^{-\imunit \omega_j} \bE^T -\bA^T)^{-1}
\end{split}
\end{align}
with $\rho_j^2$ and $\omega_j \in [0,2\pi]$ denoting, respectively, quadrature weights and nodes.

We apply a similar quadrature approximation for $\bQ$ and obtain
\begin{align} \label{quad_Q_disc}
\bQ \approx \quadQ = \sum_{k=1}^{\nq} \phi_k^2  (e^{-\imunit \zeta_k} \bE^T -\bA^T)^{-1} \bC^T \bC (e^{\imunit \zeta_k} \bE -\bA)^{-1},
\end{align}
where $\phi_k^2$ and $\zeta_k \in [0,2\pi]$ denote, respectively, quadrature weights and nodes associated with approximating $\bQ$.
{The rest of the \QBT formulation for discrete systems follows immediately as in Algorithm \ref{quadbt}. We omit those details here for brevity. 
}

\subsubsection{Numerical example: the discrete-time case}

We analyze an $n$th-order lowpass digital Butterworth filter with normalized cutoff frequency $W_c$, using the parameter values $n=40$ and $W_c = 0.6$. (A realization can be obtained in \textsc{matlab} using the command \textsf{butter}.) 

Choose  {both 150 and 300} linearly-spaced points in the interval $[0, 2 \pi)$ as the $\omega_j$'s and $\zeta_k$'s in (\ref{quad_P_disc}) and (\ref{quad_Q_disc}), for  [\textsf{ExpTrap}]. Here, the left and right nodes are chosen to be the same. For the [\textsf{B/CC}] quadrature, the values of $\omega_j$'s and $\zeta_k$'s are chosen to be Chebyshev nodes instead; again both using 150 and 300 nodes. We  vary the reduction order
from $r=2$ to $r=40$ and collect the relative $\cH_\infty$ and $\cH_2$ errors, corresponding to both quadrature schemes and  classical \BT. 
The results are presented in Fig. \ref{fig:8}. 
{\QBT via both [\textsf{ExpTrap}] and
	[\textsf{B/CC}] almost exactly replicates the \BT behavior, illustrating the success of our proposed method in the discrete-time setting as well.
}

\begin{figure}[ht]
	\hspace{-6mm}
	\includegraphics[scale=0.205]{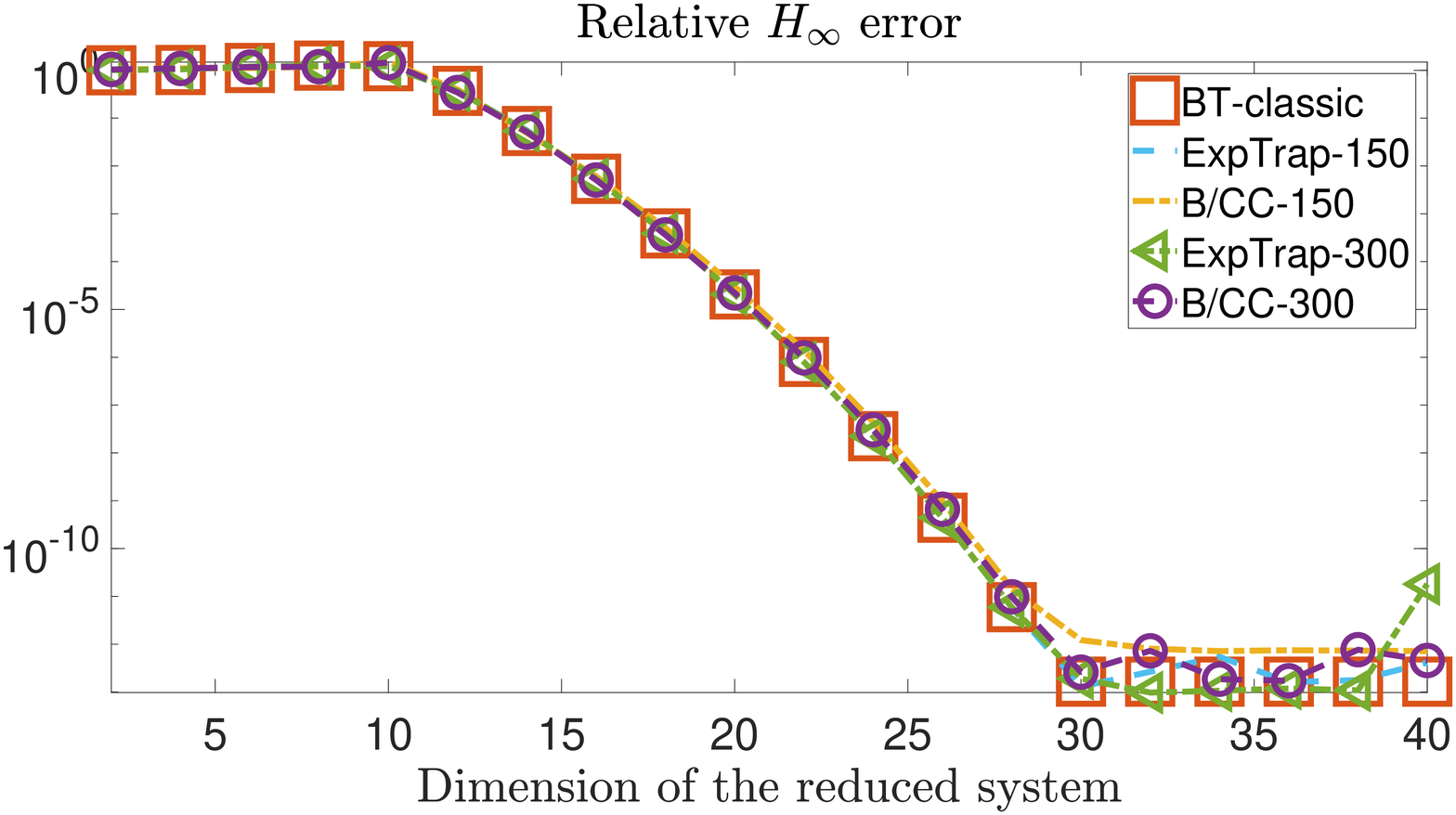}
	\hspace{-7mm}
	\includegraphics[scale=0.205]{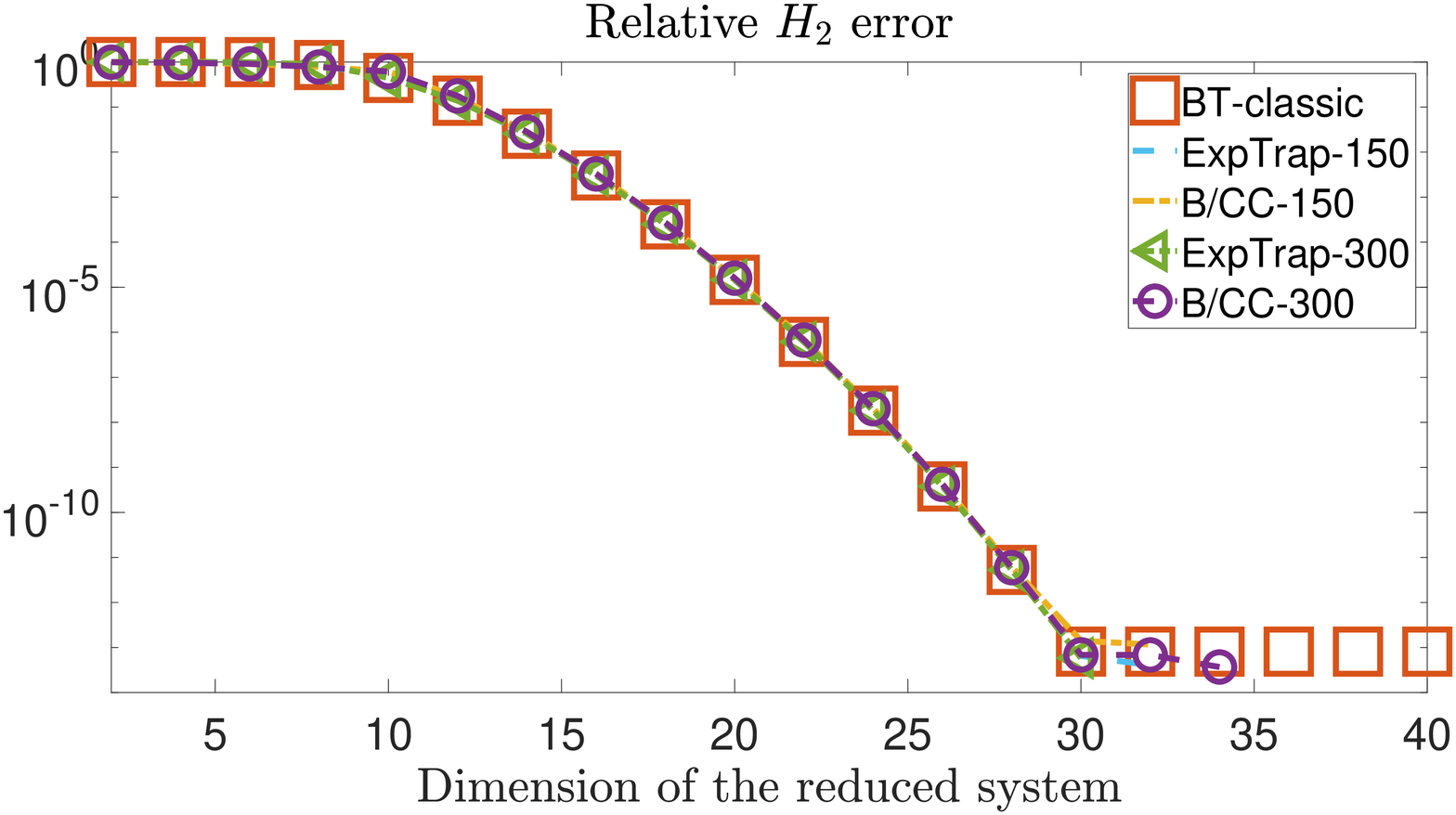}
	\vspace{-3mm}
	\caption{The relative $\cH_\infty$ approximation error for different quadratures applied to the Butterworth digital filter model.}
	\label{fig:8}
	\vspace{-2mm}
\end{figure}

\subsection{An infinite-dimensional example}\label{IRRsys}

{We revisit a numerical example from \cite{BeattieGugercin2012RlznIndH2Approx} 
	describing the response of an infinite-dimensional linear system, in this case a one-dimensional heat equation. The evolution of the temperature
	distribution on a semi-infinite rod is characterized by the PDE
	$\frac{\partial y}{\partial t}-\frac{\partial^2 y}{\partial z^2} = 0$, 
	where $y(z, t)$ denotes the temperature at position $z$ and time $t$. The temperature is controlled at the location $z = 0$, while the temperature at $z = 1$ is the quantity of interest. By enforcing particular initial and boundary conditions as in \cite{BeattieGugercin2012RlznIndH2Approx}, one may explicitly derive a
	transfer function:
	$ H(s) = \frac{Y(1,s)}{Y(0,s)} = e^{-\sqrt{s}}$,
	describing the mapping from $y(0, t)$ to $y(1; t)$ where $Y (\cdot, s)$ denotes the Laplace transform of $y(\cdot, t)$. 
	Balanced truncation can be approached within an operator-theoretic framework using ideas found for example in \cite{reis2014balancing,curtain1986}. 
	However, we are able to bypass these subtleties, since the transfer function $H(s)$ is immediately available, as well as its derivative $H'(s) = -\frac{e^{-\sqrt{s}}}{2\sqrt{s}}$. 
	
	We directly apply \QBT with [\textsf{ExpTrap}] using $50$ logarithmically-spaced points in the interval $[10^{-2}, 10^1] \imunit$. The results are displayed in Fig. \ref{fig:88}. In the left plot, we show the approximation error, $\mid H(\imunit \omega)- H_r(\imunit \omega)\mid$,  for $r=8,12,16,20$. In the right side of Fig. \ref{fig:88}, we present the approximate Hankel singular values of the transfer function $H(s) = e^{-\sqrt{s}}$. Reduced models via \QBT successfully approximate the original transfer function without requiring a discretization of the underlying PDE. 
}

\begin{figure}[ht]
	\hspace{-2mm}
	\includegraphics[scale=0.20]{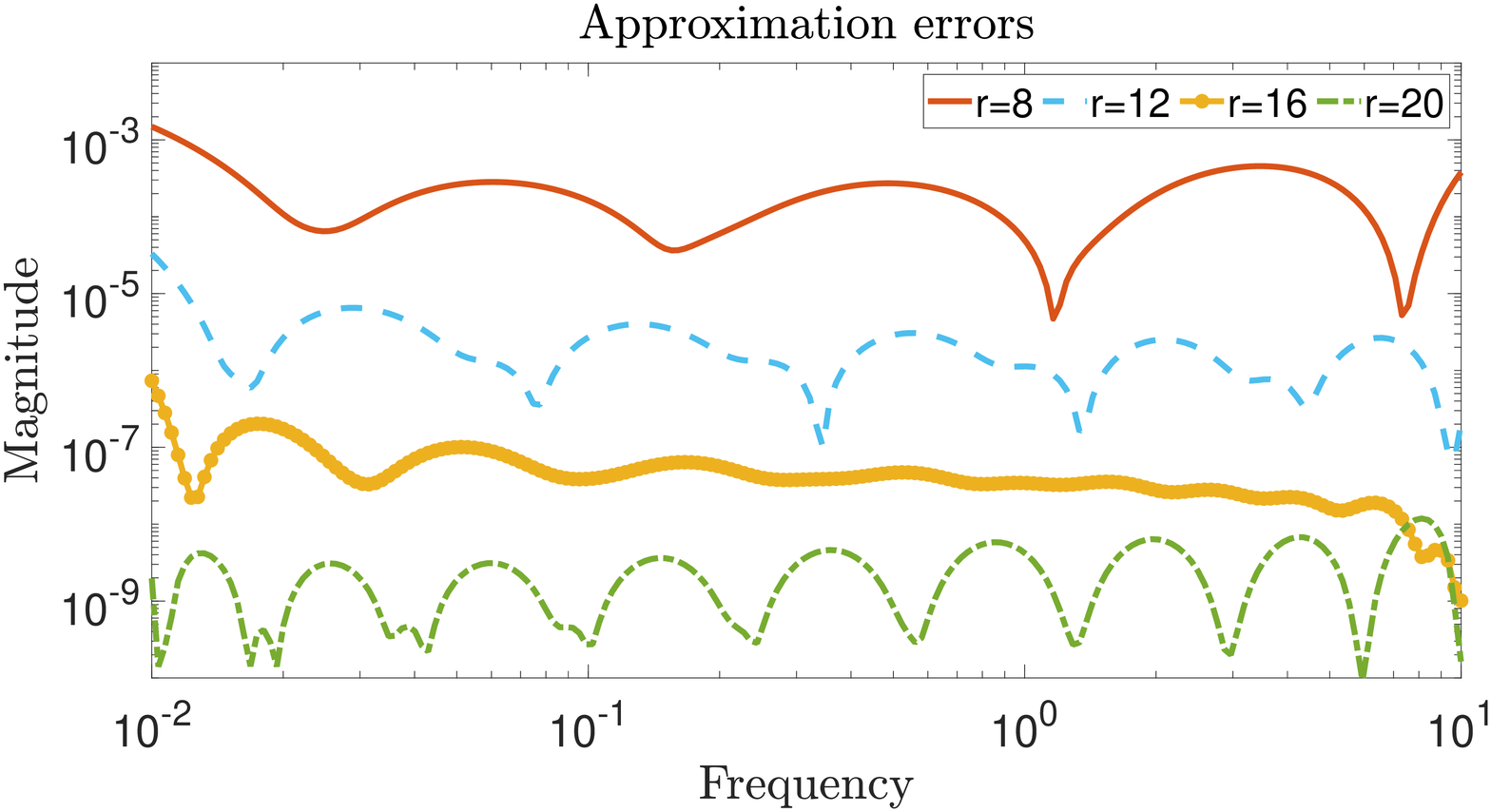}
	\hspace{-6mm}
	\includegraphics[scale=0.2]{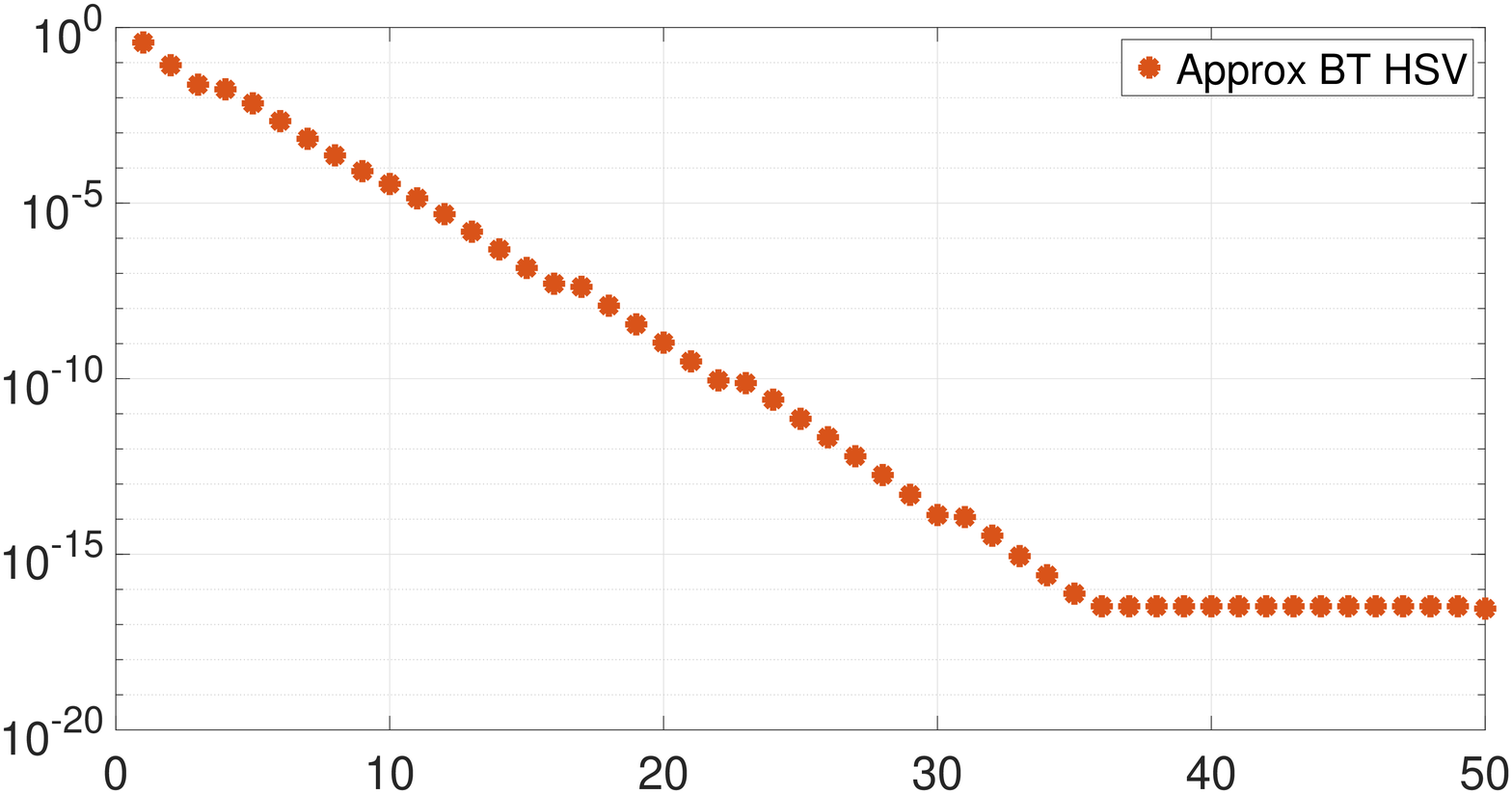}
	\vspace{-3mm}
	\caption{The approximation error for different reduction orders (left) and the approximated Hankel singular values (right) for $H(s) = e^{-\sqrt{s}}$.}
	\label{fig:88}
	\vspace{-2mm}
\end{figure}

\begin{remark}
	The numerical quadratures that we employ in our experiments converge exponentially fast with respect to the number of quadrature nodes used (see Appendix A).  Thus, one might not anticipate the need for a large number of quadrature nodes in order to ensure accuracy.   Nonetheless, exponential convergence  is an asymptotic property that might not be  readily observed in circumstances where the spectral abscissae of $\mathbf{A}$ is close to $0$, or if $\mathbf{A}$ is significantly nonnormal.   In such circumstances, the number of quadrature nodes that are required for accurate model reduction may grow quite large, and the cost of \QBT may become dominated by the cost of the singular value decomposition of $\quadLL$ in \eqref{SVD_BT2}.
	Indeed, with a large number, $N$, of quadrature nodes $N_q\approx N_p\approx N$, say, the cost of computing Step 2 of Algorithm \ref{quadbt} in order to extract an order $r$ reduced model, will grow at least as $\mathcal{O}(N^2 r)$, using standard approaches. 
	The computational burden of this step could be moderated potentially by using randomized algorithms, e.g., rSVD approaches \cite{HMT11}, and such approaches may not require
	that $\quadLL$ be explicitly available as well. 
\end{remark}

\section{{Time-domain formulation}} 
\label{sec:timeQBT}

Thus far, we have developed ideas that are central to \QBT presuming access to \emph{frequency-domain data}, i.e., data resulting from transfer function sampling at selected points in the complex plane. This formulation utilized a frequency domain description of $\bP$ and $\bQ$, expressed in \eqref{gram_P_freq} and \eqref{gram_Q}. In this section, we establish an analogous  framework in the time domain, sampling the system response at selected instants in time and
then using a time-domain representation of $\bP$ and $\bQ$ to derive a realization corresponding to an approximate balanced truncation. There are elements in common with time-domain \textsf{Balanced POD} \cite{WP02,Ro05,singler2009proper}, which uses time-domain state snapshots. We will show that, as in the frequency domain version, those implementations can be performed without access to state (internal) data. In our presentation, we will adapt the notation from \cite{Ro05} to our formulation. We will  focus on the single-input/single-output case for simplicity. However, unlike in \cite{Ro05}, we will include the $\bE$ term in our formulation to keep the paper consistent with the earlier frequency-domain framework of \QBT.

\subsection{Time-domain \QBT for continuous-time systems} \label{timeQBTcont}

As illustrated in \cite{WP02,Ro05,singler2009proper}, the \textsf{Balanced POD} approach can be applied in the time-domain by considering a numerical quadrature approximation to time-domain definitions of the Gramians, as given, e.g., in \eqref{gram_P} for the reachability Gramian. 
We also note that in the seminal paper \cite{moore1981principal} where \BT was originally introduced, Moore had already mentioned the potential of quadrature-based approximation to Gramians, thus motivating time-domain \textsf{Balanced POD} via empirical Gramians.
{We refer to the recent work \cite{morHim21} for a comparative study on
	\BT-related methods employing empirical Gramians.}
Following on to themes developed here, we will illustrate that the time-domain \textsf{Balanced POD} formulation can be fully implemented using only input/output data and in particular, without access to state variables or internally characterized state realizations. 

Assume that the time domain formula \eqref{gram_P} for $\bP$ is approximated via a numerical quadrature on $(0,\infty)$: 
{\small 
	\begin{align} 
	\begin{split}
	\bP = \int_{0}^{\infty} e^{\bE^{-1}\bA t} \bE^{-1}\bb\bb^T\bE^{-T} e^{\bA^T\bE^{-T} t} dt
	\approx 
	\sum_{j=1}^{\np} \rho_j^2 e^{\bE^{-1}\bA t_j} \bE^{-1}\bb \bb^T\bE^{-T} e^{\bA^T\bE^{-T} t_j}, 
	\end{split}
	\end{align}}\noindent where $( \rho_j^2,t_j)$ for $j=1,\ldots,\np$ denote quadrature weight/node pairs on $(0,\infty)$.  Define
\begin{equation}  \label{quadUtime}
\quadU = \left[ \,
\rho_1  e^{\bE^{-1}\bA t_1} \bE^{-1} \bb \quad 
\cdots \quad \rho_\np  e^{\bE^{-1}\bA t_\np} \bE^{-1} \bb \right] \in 
\IR^{n \times \np}.
\end{equation}
Then, $\quadP = \quadU \quadU^*$ is an approximation to $\bP$ with $\quadU$ as the 
(approximate) square-root factor. As pointed out in \cite{Ro05},  the columns of $\quadU$ without the quadrature weights can be viewed as the snapshot data obtained from simulating the system 
\begin{equation} \label{eq:primal}
\bE \dot{\bw} = \bA \bw,\quad\mbox{with}\quad\bw(0) = \mathbf{E}^{-1}\bb. 
\end{equation}
One may form a time-domain quadrature approximation to  $\bQ$ in a 
similar fashion:
{\small
	\begin{align} 
	\begin{split}
	\bQ = \int_{0}^{\infty} e^{\bE^{-T}\bA^T t} \bE^{-T}\bc\bc^T\bE^{-1} e^{\bA \bE^{-1} t} dt
	\approx 
	\sum_{i=1}^{\nq} \phi_i^2 e^{\bE^{-T}\bA^T \tau_i} \bE^{-T}\bc \bc^T\bE^{-1} e^{\bA\bE^{-1} \tau_i} 
	\end{split}
	\end{align}}\noindent with $(\phi_i^2,\tau_j)$ for $i=1,\ldots,\nq$ being quadrature weight/node pairs on $(0,\infty)$. 

\noindent
Evidently, the corresponding square-root factor is
\begin{equation}  \label{quadLtime}
\quadL^T = \left[ \begin{matrix}
\phi_1 \bc^T \bE^{-1} e^{\bA\bE^{-1} \tau_1} \\
\vdots \\ \phi_\nq \bc^T \bE^{-1} e^{\bA \bE^{-1} \tau_\nq}
\end{matrix}  \right] \in \mathbb{R}^{\nq \times n},
\end{equation}
so that $\quadQ = \quadL \quadL^*$ approximates $\bQ$. 
Similar to $\quadU$, the columns of $\quadL$ without quadrature weights can be viewed as the snapshot data from simulating the dual system given by
\begin{equation} \label{eq:dual}
\bE^T \dot{\bz} = \bA^T \bz,\quad\mbox{with}\quad\bz(0) = \mathbf{E}^{-T}\bc. 
\end{equation}

The square-root implementation of time-domain \textsf{Balanced POD} \cite{Ro05} proceeds by using $\quadU$ and $\quadL$ in place of $\bU$ and $\bL$ in the square-root implementation of \BT in Algorithm \ref{origbt}. So far, this still reflects a realization-\emph{dependent} implementation, i.e., we require access to state-space representation.  We will avoid this requirement below.

As we did with frequency-domain data, we inspect 
$\quadLL = \quadL^T \bE \quadU \in \IR^{\nq \times \np}$. 
Using \eqref{quadUtime} and \eqref{quadLtime},
the $(i,j)$th component of $\quadLL$ is obtained as 
\begin{align} \label{Lijtime}
\quadLL_{ij} & = \phi_i \rho_j  \bc^T \bE^{-1} e^{\bA \bE^{-1} \tau_i} \bE e^{\bE^{-1}\bA t_j} \bE^{-1} \bb  = \phi_i \rho_j h(\tau_i+t_j),
\end{align}
where $h : \IR \rightarrow \IR$ is the impulse response of the underlying dynamical system, as
\begin{equation}\label{imp_resp_def}
h(t) = \bc^T e^{\bE^{-1}\bA t}\bE^{-1} \bb.
\end{equation}
The last equality in \eqref{Lijtime} follows from the observation 
\begin{equation}
\left(\bE^{-1} e^{\bA \bE^{-1} \tau_i}\right) \bE e^{\bE^{-1}\bA t_j}=\left( e^{\bE^{-1}\bA \tau_i}\bE^{-1}\right) \bE e^{\bE^{-1}\bA t_j} = e^{\bE^{-1}\bA (\tau_i+t_j)},
\end{equation}
which directly follows from inspection of the partial sums of the series expansion of the parenthesized expression. 
Using similar ideas, one can also show that  
\begin{equation}  \label{hijtime}
\quadLb = \quadL^T\bb = \left[ \begin{matrix}
\phi_1 \bc^T \bE^{-1} e^{\bA\bE^{-1} \tau_1} \bb
\\ \vdots \\ \phi_\nq \bc^T \bE^{-1} e^{\bA \bE^{-1} \tau_\nq} \bb
\end{matrix}  \right] =
\left[ \begin{matrix}
\phi_1 h(\tau_1) \\ 
\vdots \\ \phi_\nq h(\tau_\nq)
\end{matrix}  \right],
\end{equation}
and 
\begin{align}  \label{gijtime}
\quadcU^T= \bc^T\quadU &= \left[ \,
\rho_1  \bc^T e^{\bE^{-1}\bA t_1} \bE^{-1} \bb \quad 
\cdots \quad \bc^T  \rho_\np  e^{\bE^{-1}\bA t_\np} \bE^{-1} \bb \right] \nonumber \\
& = \left[ \,
\rho_1  h(t_1) \quad 
\cdots \quad \rho_\np h(t_\np)\right].
\end{align}
Now, let us consider the  $(i,j)$th component of $\quadMM$:
\begin{align}\label{Mijtime}
\quadMM_{ij} & = \phi_i \rho_j  \bc^T \bE^{-1} e^{\bA \bE^{-1} \tau_i} \bA e^{\bE^{-1}\bA t_j} \bE^{-1} \bb  = \phi_i \rho_j h'(\tau_i+t_j)
\end{align}
where 
\begin{equation}\label{der_imp_resp_def}
h'(t) = \frac{d}{dt}h(t) = \bc^T e^{\bE^{-1}\bA t}\bE^{-1}\bA \bE^{-1} \bb.
\end{equation}
As with the other quantities, $\quadMM$ can also be  obtained from the impulse response samples; yet in this case one will need to measure the derivative of $h'(t)$.

We have just shown that \textsf{Balanced POD} can be obtained directly from sampling the impulse response $h(t)$ and its derivative $h'(t)$ without access to state-snapshots data. A time-domain version of \QBT follows directly as given in Algorithm \ref{timequadbt}.

{
	\begin{algorithm}[htp] 
		\caption{Time-domain \QBT}
		\label{timequadbt}                            
		\algorithmicrequire~LTI system described through an impulse response evaluation map, $h(t)$; \\
		\hspace*{5mm} quadrature nodes, $t_j$, and weights, $\rho_j$, for $j=1,2,\ldots,\np$; \\
		\hspace*{5mm} quadrature nodes, $\tau_k$, and weights, $\phi_k$, for $k=1,2,\ldots,\nq$, \\
		\hspace*{5mm} and a truncation index, $1\leq r\leq \min(\np,\nq)$.
		
		\algorithmicensure~{Time-domain \QBT} reduced-system:  $\quadAr\in\mathbb{R}^{r \times r}, \ \quadbr, \quadcr \in \mathbb{R}^r$.
		
		\begin{algorithmic} [1]                                        
			\STATE \label{sampletime} Sample the impulse response 
			$\{h(t_j)\}_{j=1}^{\np}$  and $\{h(\tau_k)\}_{k=1}^{\nq}$ and its derivative 
			$\{h'(t_j)\}_{j=1}^{\np}$  and $\{h'(\tau_k)\}_{k=1}^{\nq}$.  Using the samples and quadrature weights, $\{\rho_j\}$ and $\{\phi_k\}$, 
			construct $\boldsymbol{\quadLL} \in \IC^{\nq \times \np}$, $\boldsymbol{\quadMM} \in \IC^{\nq \times \np}$, $\quadLb$, and $\quadcU$  as in \eqref{Lijtime}, \eqref{Mijtime}, \eqref{hijtime}, and \eqref{gijtime}, respectively, 
			\STATE Compute the SVD of $\quadLL$:
			\begin{equation}\label{SVD_BT2_time}
			\quadLL  = \left[ \begin{matrix}
			\quadZ_1 & \quadZ_{2}
			\end{matrix}  \right] \left[ \begin{matrix}
			\quadS_1 & \\ & \quadS_{2}
			\end{matrix}  \right] \left[ \begin{matrix}
			\quadY_1^* \\ \quadY_{2}^*
			\end{matrix}  \right],
			\end{equation}
			where $\quadS_1 \in \mathbb{R}^{r \times r}$ and $\quadS_{2} \in \mathbb{R}^{(\nq-r) \times (\np-r)}$.
			
			\STATE Construct the reduced order matrices:
			\begin{equation} \label{quadbtar_time}
			\begin{array}{cc} \quadEr= \quadS_1^{-1/2} \quadZ_1^* \ \quadLL \ \quadY_1   \quadS_1^{-1/2} = \bI_r, 
			\quad & \quad 
			\quadAr = \quadS_1^{-1/2} \quadZ_1^* \ \quadMM \ \quadY_1   \quadS_1^{-1/2}, \\[2mm]
			\quadbr =  \quadS_1^{-1/2} \quadZ_1^* \quadLb,\qquad \mbox{and} &\quad 
			\quadcr =  \quadcU^T \quadY_1   \quadS_1^{-1/2}.
			\end{array}
			\end{equation}
		\end{algorithmic}
	\end{algorithm}
}

\begin{remark}
	Opmeer in \cite{opmeer2011model} established a connection between \textsf{Balanced POD} and interpolatory model reduction when the state data from the simulations of \eqref{eq:primal} and \eqref{eq:dual}  obtained via a numerical ODE solver such as forward and backward Euler, and other multi-stage implicit methods. In
	\cite{BF20}, Bertram and Fa{\ss}bender have further expanded on this idea by giving explicit connections between the Butcher tableau of the underlying  Runge-Kutta method and the resulting interpolation points. 
\end{remark} 
\begin{remark}
	We note that  the impulse response measurements, $h(t_i)$ and  $h'(t_i)$, which are needed to construct the data matrices in \eqref{Lijtime}, \eqref{hijtime}, \eqref{gijtime}, and \eqref{Mijtime}, are assumed to be available. In practical scenarios, one would need to rely on the estimation of such values either from experimental data or from numerical simulation. We refer to, e.g., \cite{CompImpResp}, for various impulse response measurement approaches that are in use for acoustical systems. 
\end{remark}
\begin{remark}
	In related current work \cite{igor},  impulse response sampling of $h(t)$ and $h'(t)$  are employed to extend the Eigenvalue Realization Algorithm \cite{kung} to continuous-time dynamical systems.
\end{remark}

\subsection{Time-domain \QBT for discrete-time systems}  \label{timeQBTdisc}
For the (SISO) discrete-time dynamical system 
\begin{align} \label{discretesiso}
\begin{array}{c} \bE \bx_{k+1}= \bA\, \bx_{k} + \bb\, u_k, \\[1mm] y_k = \bc^T \bx_k,\end{array}
\end{align}
the time-domain formulation simplifies drastically due to the uniform discrete time-stepping.
The reachability Gramian $\bP$ for \eqref{discretesiso}, in  time-domain, is given by
\begin{align}
\bP = \sum_{k=0}^\infty (\bE^{-1}\bA)^k \bE^{-1}\bb \bb^T \bE^{-T} (\bA^T\bE^{-T})^k.
\end{align}
Time-domain \textsf{Balanced POD} for discrete-time systems \cite{ma2011reduced} uses 
\begin{align*}
\quadU = \begin{bmatrix}
\bE^{-1}\bb ~&~ (\bE^{-1}\bA) \bE^{-1}\bb ~&~ \cdots ~&~ (\bE^{-1}\bA)^{\np-1} \bE^{-1}\bb
\end{bmatrix} \in \IR^{n \times \np}
\end{align*}
as an approximate square-root factor for $\bP$, i.e., 
\begin{align*}
\bP \approx \quadP = \quadU \quadU^T = \sum_{k=0}^{\np-1} (\bE^{-1}\bA)^k \bE^{-1}\bb \bb^T \bE^{-T} (\bA^T\bE^{-T})^k.
\end{align*}
One may follow the same development for $\bQ$. For  \eqref{discretesiso}, $\bQ$ is given by
\begin{align}
\bQ = \sum_{k=0}^\infty (\bE^{-T}\bA^T)^k \bE^{-T}\bc \bc^T \bE^{-1} (\bA\bE^{-1})^k.
\end{align}
Then, an approximate square-root factor for $\bQ$ is 
\begin{align*}
\quadL^T = \begin{bmatrix}
\bc^T\bE^{-1} \\ \bc^T\bE^{-1} (\bA\bE^{-1}) \\ \vdots \\ \bc^T\bE^{-1} (\bA\bE^{-1})^{\nq-1} 
\end{bmatrix} \in \IR^{\nq \times n},
\end{align*}
such that 
\begin{align*}
\bQ \approx \quadQ = \quadL \quadL^T =  \sum_{k=0}^{\nq-1} (\bE^{-T}\bA^T)^k \bE^{-T}\bc \bc^T \bE^{-1} (\bA\bE^{-1})^k.
\end{align*}
\noindent Note that these truncated sums are also the main tool in obtaining low-rank approximations to $\bP$ and $\bQ$ (see, e.g., \cite{smith1968matrix,penzl1999cyclic,gugercin2003modified}). 
Then, \cite{ma2011reduced} observes that 
$\quadLL = \quadL^T \bE \quadU$ can be directly obtained from data: Direct computation shows that
$$
\quadLL_{ij} = \bc^T (\bE^{-1}\bA)^{i+j-2} \bE^{-1} \bb.
$$
Thus in time-domain \textsf{Balanced POD} for discrete-time systems, $\quadLL$ is the Hankel matrix of Markov parameters. Similarly, one may directly show that $\quadMM=\quadL^T \bE \quadU$ is the shifted Hankel matrix of Markov parameters, i.e.,
$$
\quadMM_{ij} = \bc^T (\bE^{-1}\bA)^{i+j-1} \bE^{-1} \bb.
$$
Likewise, $\quadL^T\bb$ and $\bc^T\quadU$ can be directly obtained from Markov parameters. Indeed, \cite{ma2011reduced} observed that for the special case of discrete-time dynamical systems, time-domain \textsf{Balanced POD} can be obtained from input-output data using  Markov parameters. For discrete-time dynamical systems, the Markov parameters are  samples of the impulse response function $h(k) = \bc^T (\bE^{-1}\bA)^{k}\bE^{-1}\bb$ for $k=0,1,2,\ldots$. (Note that \cite{ma2011reduced} also observed  that this formulation of time-domain \textsf{Balanced POD} is equivalent to the
Eigenvalue Realization Algorithm \cite{kung}.)
Our formulation in \S \ref{timeQBTcont} generalizes concepts from \cite{ma2011reduced} to continuous-time systems and unifies them under a larger umbrella of impulse-response sampling; our framework in \S \ref{timeQBTcont} goes farther in removing the requirement of access to state-space quantities.

\section{Conclusion}
\label{sec:conc}

We introduce here a novel reformulation of one of the most important system-theoretic model reduction tools in use today, \emph{balanced truncation}.
In its usual formulation, this method requires intrusive access to the original system realization. 
By contrast, our reformulation requires only system response data, either measured or computed.   
The central theme driving our development involves approximation of Gramian-related quantities by means of convergent numerical quadratures, permitting results as close as desired to those of classical \BT. 
We demonstrate the application of our approach to a multitude of test cases, for both SISO and MIMO systems, and for both continuous-time and discrete-time systems. 
We outline time-domain extensions, connecting our approach to existing methods as well as offering promising future themes.

\section*{Acknowledgements}
The work of Beattie and Gugercin 
was supported in parts by National Science Foundation under Grant No. DMS-1923221 and DMS-1819110.

\bibliographystyle{spmpsci}      
\bibliography{GGB20_BT_data_ref}   

\subsection*{Appendix A: Two numerical quadrature rules}
\label{AppendixA}

Our reformulation of balanced truncation makes central use of numerical quadrature rules, and we choose two particular quadrature rules, the exponential trapezoid rule and a refinement of the Clenshaw–Curtis quadrature rule developed by Boyd, to illustrate the approach.  These strategies are effective in approximating matrix-valued integrals which generically appear as: $ \int_{-\infty}^{\infty} \bF(\zeta) d \zeta$ where
we assume that, for $\zeta\in\mathbb{R}$, the integrand $\bF(\zeta)$ is continuously differentiable, $\bF(-\zeta) = \bF(\zeta)^T$, and
$\lim_{\zeta\rightarrow \pm\infty} \zeta^2\bF(\zeta) =\bM=\bM^T\in\mathbb{R}^{n\times n}$ (so that, in particular, $\|\bF(\zeta)\|=\mathcal{O}\left(\frac{1}{\zeta^2}\right)$ as $\zeta\rightarrow \pm\infty$).  This assumed form  covers the Gramian integrals, \eqref{gram_P_freq} and \eqref{gram_Q}.  The quadrature rules that we describe below are typical of possible choices though certainly there will be many other refined approaches that one may consider (see especially \cite{trefethen2014exponentially}).

\textsf{[ExpTrap]: The exponential trapezoid rule.}  Note first that
{\small
	\begin{align*}
	\int_{-\infty}^{\infty} \bF(\zeta) d \zeta & =
	\int_{0}^{\infty} \bF(\zeta)+\bF(\zeta)^T\ d \zeta 
	=L \int_{-\infty}^{\infty} e^{L\tau}\left(\bF(e^{L\tau})+\bF(e^{L\tau})^T\right)\ d \tau =  \int_{-\infty}^{\infty} \bG(\tau)\; d \tau, 
	\end{align*}}\noindent where $\bG(\tau)=L\,e^{L\tau}\left(\bF(e^{L\tau})+\bF(e^{L\tau})^T\right)$; the first equality has used the symmetry of $\bF(\cdot)$; and the second equality has introduced a change-of-variable for $\zeta>0$ as $\zeta=e^{L\tau}$.  For any $h>0$ and $t\in\mathbb{R}$,  $\sum_{k=-\infty}^{\infty} \bG(t+khL)$, viewed as a function of $t$, is well-defined, continuously differentiable, and  $hL$-periodic. In particular, it has a pointwise convergent Fourier series with (matrix-valued) coefficients:
\begin{align*}
\sum_{k=-\infty}^{\infty} \bG(t+khL) = \frac{2\pi}{hL}\sum_{\ell=0}^{\infty} \widehat{\bG}\left(\frac{2\pi\ell}{hL}\right)\; e^{ \dot{\imath\!\imath}\,\frac{2\pi\ell}{hL}t}  
\end{align*}
where $\widehat{\bG}(\omega)$ denotes the Fourier transform of $\bG(t)$.  Evaluating at $t=0$, this expression may be rearranged to
$$
hL\sum_{k=-\infty}^{\infty} \bG(khL) -  \int_{-\infty}^{\infty} \bG(\tau)\; d \tau = 2\pi\sum_{\ell=1}^{\infty} \widehat{\bG}\left(\frac{2\pi\ell}{hL}\right)
$$
or equivalently,
\begin{equation}\label{quadDisc}
hL \sum_{k=-\infty}^{\infty} e^{khL}\left(\bF(e^{khL})+\bF(e^{khL})^T\right)-\int_{-\infty}^{\infty} \bF(\zeta)\; d \tau = 2\pi\sum_{\ell=1}^{\infty} \widehat{\bG}\left(\frac{2\pi\ell}{hL}\right). \tag{A.1}
\end{equation}
The right-hand side becomes exponentially close to zero as $h\rightarrow 0$.  The first sum on the left-hand side manifests the trapezoid rule, which becomes computationally tractable with suitable truncation of the sum: say, $\nu_0\leq k\leq \nu_0+\varpi$.
This leads to: 
$$
\int_{-\infty}^{\infty} \bF(\zeta) d \zeta  \approx hLe^{\nu_{0}hL}\bF(0) +  hLe^{\nu_{0}hL} \sum_{\ell=0}^{\varpi} e^{\ell hL}\left(\bF(e^{(\nu_0+\ell) hL})+\bF(-e^{(\nu_0+\ell) hL})\right),
$$
which includes a correction that weights the value of $\bF$ at $\zeta=0$ as well.  

The right-hand side of \eqref{quadDisc} gives the discretization error if the sum on the left-hand side were not truncated.   This discretization error is on the order of $e^{-\frac{2\pi a}{hL}}$ as $h\rightarrow 0$ where $a\approx \inf |\Re e(\lambda)|$  taken over all poles $\lambda$ of $\mathbf{A}$ (e.g., see \cite[Theorem 5.2]{trefethen2014exponentially}).  If we assume for simplicity that the left-hand sum in \eqref{quadDisc} is truncated symmetrically so that $\nu_0\approx -\frac{\varpi}{2}$ and note that $\|\bG(\tau)\|=\mathcal{O}(e^{-L|\tau|})$ as $\tau\rightarrow \pm \infty$, then balancing  discretization error with truncation error will suggest that the optimal number of quadrature nodes will grow proportionately with $\frac{1}{h^2}$ and the quadrature error will then decrease exponentially fast as the number of quadrature nodes, $\varpi$, increases:  roughly as $\mathcal{O}(e^{-\alpha\sqrt{\varpi}})$ with $\alpha\approx \sqrt{\pi a}$.

\textsf{[B/CC]: The Boyd/Clenshaw–Curtis rule.}  Boyd adapted the Clenshaw-Curtis quadrature rule to infinite integration domains in \cite{Boyd87}. We summarize the application of this rule to our setting.  
Starting with the change-of-variable $\zeta = L \cot(\tau)$, we find:
\begin{equation} \label{BCCcov}
\int_{-\infty}^{\infty} \bF(\zeta) d \zeta = \int_{0}^{\pi} \frac{\bF(L \cot(\tau)) L}{\sin^2(\tau)} d \tau.  \tag{A.2}
\end{equation}
The right-hand integrand is evidently $\pi$-periodic. With the asymptotics assumed for $\bF$ we have that limits exist for the right-hand integrand both at $0$ and $\pi$ and they are real, symmetric, and equal:
\begin{align*} 
\lim_{\tau\rightarrow 0} \frac{\bF(L \cot(\tau)) L}{\sin^2(\tau)} 
=& \frac{1}{L} \lim_{\tau\rightarrow 0} \left(L \cot(\tau)\right)^2\bF(L \cot(\tau))=
\frac{1}{L} \lim_{\zeta\rightarrow \infty} \zeta^2\bF(\zeta)= \frac{1}{L}\bM \\[2mm]
\lim_{\tau\rightarrow \pi} \frac{\bF(L \cot(\tau)) L}{\sin^2(\tau)} 
=& \frac{1}{L} \lim_{\tau\rightarrow \pi} \left(L \cot(\tau)\right)^2\bF(L \cot(\tau))=
\frac{1}{L} \lim_{\zeta\rightarrow -\infty} \zeta^2\bF(\zeta)= \frac{1}{L}\bM.
\end{align*} 
The trapezoid rule for the right-hand integral in \eqref{BCCcov} is a compelling choice.  Defining $h=\frac{\pi}{\varpi+1}$ for $\varpi\in\mathbb{N}$, take equally spaced points $\tau_{\ell}=\ell h $
for $\ell=0,1,...,\varpi+1$,  giving
\begin{equation} \label{BCCformula}
\int_{-\infty}^{\infty} \bF(\zeta) d \zeta  \approx \sum_{\ell=1}^{\varpi} hL
\frac{\bF(L \cot(\tau_{\ell}))}{\sin^2(\tau_{\ell})} + \frac{h}{L}\bM 
= \sum_{\ell=1}^{\varpi}  \rho_{\ell}^2 
\bF(\omega_{\ell}) + \rho_{\infty}^2\bM, \tag{A.3}
\end{equation}
where $\rho_{\ell}^2 = \frac{L\pi}{(\varpi+1)\sin^2(\tau_{\ell})}$ and $\omega_{\ell}=L \cot(\tau_{\ell}))$ 
for $\ell=1,...,\varpi$ and $\rho_{\infty}^2=\frac{\pi}{L(\varpi+1)}$. 
The last term in \eqref{BCCformula} is $\frac{h}{2}$ multiplying the combined evaluation of the periodicized integrand at the endpoints, $0$ and $\pi$, corresponding to the ``nodes at infinity'' for the original integral.  
We note that a similar derivation was described in \cite[\S3.3]{DGB15} for the computation of the $\mathcal{H}_2$ norm for linear dynamical systems. We refer the reader to 
\cite{boyd1982optimization} and \cite{drmac2015vector} for details guiding the choice of $L$ in practice.

\subsection*{Appendix B: Proof of Proposition \ref{prop:QuadError}} 
\label{AppendixB}

The hypotheses imply  that  
$$
\|\bQ-\quadQ\|_2<\sigma_{\mathsf{min}}(\bQ)\quad \mbox{and}\quad
\|\bP-\quadP\|_2<\sigma_{\mathsf{min}}(\bP).
$$  We then use results from  \cite[Thm.~1.4]{sun1991perturbation} (see also \cite[Thm.~10.8]{higham2002accuracy}) to build 
isometries $\boldsymbol{\Psi}_p\in\mathbb{C}^{\np \times n}$ and $\boldsymbol{\Psi}_q\in\mathbb{C}^{\nq \times n}$ and perturbations $\Delta\bL,\,\Delta\bU\in\mathbb{C}^{n \times n}$ such that 
$$
\quadU^*=\boldsymbol{\Psi}_p(\bU+\Delta\bU)^T   \quad \mbox{and} \quad\quadL^*=\boldsymbol{\Psi}_q(\bL+\Delta\bL)^T.
$$
Defining $\varepsilon_q=\frac{\|\bQ-\quadQ\|_F}{\|\bQ\|_2}$ and 
$\varepsilon_p=\frac{\|\bP-\quadP\|_F}{\|\bP\|_2}$, the hypotheses assure that both 
$$
\mathsf{cond}(\mathbf{Q})\varepsilon_q \leq \frac{\delta}{1+\delta}<1\quad \mbox{and}\quad
\mathsf{cond}(\mathbf{P})\varepsilon_p\leq \frac{\delta}{1+\delta}<1,
$$
and then,
referring again to \cite[Thm.~1.4]{sun1991perturbation},
we have further 
{\small
	$$
	\frac{\|\Delta\bU\|_F}{\|\bU\|_2}\leq \left(\frac{1}{\sqrt{2}}\right)\frac{\mathsf{cond}(\mathbf{P})\varepsilon_p}{1-\mathsf{cond}(\mathbf{P})\varepsilon_p}\leq \frac{\delta}{\sqrt{2}} \quad \mbox{and} \quad
	\frac{\|\Delta\bL\|_F}{\|\bL\|_2}\leq \left(\frac{1}{\sqrt{2}}\right)\frac{\mathsf{cond}(\mathbf{Q})\varepsilon_q}{1-\mathsf{cond}(\mathbf{Q})\varepsilon_q}\leq \frac{\delta}{\sqrt{2}}.
	$$
}
Now observe
{\small 
	\begin{align*}
	\quadL^* \bE \quadU &=\boldsymbol{\Psi}_q(\bL+\Delta\bL)^T\bE(\bU+\Delta\bU)\boldsymbol{\Psi}_p^T\\
	=&\boldsymbol{\Psi}_q(\bL^T\bE\bU)\boldsymbol{\Psi}_p^T
	+\boldsymbol{\Psi}_q\,(\Delta\bL)^T(\bE\bU)\boldsymbol{\Psi}_p^T  +\boldsymbol{\Psi}_q(\bL^T\bE)(\Delta\bU)\,\boldsymbol{\Psi}_p^T
	+\boldsymbol{\Psi}_q\,(\Delta\bL)^T\,\bE\,(\Delta\bU)\,\boldsymbol{\Psi}_p^T
	\end{align*}
}
and as a result, 
\begin{align*}
&\|\quadL^* \bE \quadU -\boldsymbol{\Psi}_q(\bL^T\bE\bU)\boldsymbol{\Psi}_p^T\|_F  \leq
\|\boldsymbol{\Psi}_q\,\Delta\bL^T(\bE\bU)\boldsymbol{\Psi}_p^T \|_F\quad  + \\ 
& \hspace*{4cm} \|\boldsymbol{\Psi}_q(\bL^T\bE)(\Delta\bU)\,\boldsymbol{\Psi}_p^T\|_F
+ \| \boldsymbol{\Psi}_q\,(\Delta\bL)^T\,\bE\,(\Delta\bU)\,\boldsymbol{\Psi}_p^T\|_F\\
& \hspace*{2cm}\leq  \|\Delta\bL\|_F\, \|\bE\bU\|_2  + \|\bL^T\bE\|_2\,\|\Delta\bU\|_F
+ \|\Delta\bL\|_F\,\|\bE\|_2 \|\Delta\bU\|_F \\
& \hspace*{2cm}\leq \frac{\delta}{\sqrt{2}}\, \|\bL\|_2\, \|\bE\bU\|_2  + \frac{\delta}{\sqrt{2}}\,\|\bL^T\bE\|_2\,\|\bU\|_2
+ \left(\frac{\delta}{\sqrt{2}}\,\right)^2\|\bL\|_2\,\|\bE\|_2 \|\bU\|_2 \\
& \hspace*{2cm}\leq \left(\sqrt{2}\,\delta+\frac{\delta^2}{2}\right) \|\bL\|_2\, \|\bE\|_2\,\|\bU\|_2 < 2\,\delta\, \|\bL\|_2\, \|\bE\|_2\,\|\bU\|_2,
\end{align*}
where the last inequality takes into account that $0<\delta<1$. 

For the final assertion, we note that the singular values of 
$\boldsymbol{\Psi}_q(\bL^T\bE\bU)\boldsymbol{\Psi}_p^*$ are
$\{\sigma_1,\, \sigma_2,\, \cdots,\, \sigma_n\}$ augmented with $0$.
A consequence of Lidskii's Majorization Theorem (e.g., \cite[Thm.III.4.4]{bhatia2013matrix}) gives
$$
\sum_{k=1}^n(\widetilde{\sigma}_k-\sigma_k)^2 \leq \|\quadL^* \bE \quadU-\boldsymbol{\Psi}_q(\bL^T\bE\bU)\boldsymbol{\Psi}_p^*\|_F^2.
$$
Then \eqref{quadBound}, the first assertion of Proposition \ref{prop:QuadError},  gives the conclusion.
\qquad $\blacksquare$

\end{document}